\theoremstyle{plain}
\newtheorem{theorem}{Theorem}[section]
\newtheorem{proposition}[theorem]{Proposition}
\newtheorem{lemma}[theorem]{Lemma}
\newtheorem{corollary}[theorem]{Corollary}
\newtheorem{maintheorem}{Theorem}
\theoremstyle{definition}
\newtheorem{remark}[theorem]{Remark}
\newtheorem{example}[theorem]{Example}
\newtheorem{definition}[theorem]{Definition}
\numberwithin{equation}{section}
\title[Mean dimension and metric mean dimension for non-autonomous    systems]{Mean dimension and metric mean dimension for non-autonomous  dynamical systems}
\author{Fagner B. Rodrigues \quad and \quad  Jeovanny Muentes Acevedo}
\begin{document}

\address{Fagner B. Rodrigues, Departamento de Matem\'atica, Universidade Federal do Rio Grande do Sul, Brazil}
\email{fagnerbernardini@gmail.com}

\address{Jeovanny de Jesus Muentes Acevedo, Facultad de Ciencias B\'asicas,  Universidad Tecnol\'ogica de  Bol\'ivar, Cartagena de Indias - Colombia}
\email{jmuentes@utb.edu.co}

\begin{abstract}
In this paper we extend the definitions  of mean dimension and metric mean dimension for non-autonomous  dynamical systems. We show some properties of this extension and furthermore some applications to the mean dimension and metric mean dimension of single continuous maps. 
 \end{abstract}

\keywords{Non-autonomous dynamical systems,  mean dimension, metric mean dimension, topological entropy}

\subjclass[2010]{37B55, 37B40, 37A35}

\date{\today}
\maketitle


\section{Introduction}

In   the late 1990's, M. Gromov in \cite{Gromov} introduced the notion of mean dimension for a topological dynamical system $(X,\phi)$
($X$ is a compact topological space and $\phi$ is a continuous map on $X$), which is, as well as the topological entropy, an invariant under conjugacy.    In \cite{lind}, Lindenstrauss and Weiss showed  that the mean dimension is zero if the topological dimension
of $X$ is finite. They  gave some examples where the mean dimension is positive. For instance, they proved that the mean dimension of   $(([0,1]^m)^{\mathbb{Z}}, \sigma)$, where $\sigma$ is the two-sided full shift map on $([0,1]^m)^{\mathbb{Z}}$,  which has infinite topological entropy,   is equals to $m$ and that any non-trivial factor of  $( ([0,1]^m)^{\mathbb{Z}},\sigma)$
has positive mean dimension. 

\medskip

Given a dynamical system  $(X,\phi)$, an interesting question related to such a system is the following: under what conditions is it possible to imbed  such a system in the shift space $(([0,1]^\mathbb{N})^{\mathbb{Z}}, \sigma)$? That is, what properties the system must have to guarantee the existence of a continuous map $i: X\to ([0,1]^\mathbb{N})^{\mathbb{Z}}$  satisfying $\sigma \circ i = i \circ \varphi$?  In \cite{lind} the authors proved that 
 a necessary condition for an invertible system $(X,\phi)$ to be embedded in 
$(([0,1]^m)^{\mathbb{Z}},\sigma)$   is   that $\text{mdim}(X,\phi)\leq m$, where  $\text{mdim}(X,\phi)$ denotes the mean dimension of the system $(X,\phi)$.
In \cite{lind3}  it was  proved that if $ (X, \phi)$ is an invertible system which is an extension of a minimal system, and $K$ is  a convex set with non-empty interior such that $\text{mdim} (X,\phi)< \text{dim} K /36$, then  $(X, \phi)$ can be embedded in the shift space $(K^{\mathbb{Z}},\sigma)$. In particular, if $\text{mdim} (X,\phi)< m/36$, then  $(X, \phi)$ can be embedded in $(([0,1]^m)^{\mathbb{Z}},\sigma)$. More recently, Gutman and Tsukamoto  \cite{Gutman} showed that, that if $(X, \phi)$ is a minimal system with $\text{mdim}(X, \phi) <N/
2$ then we can embed it in
$(([0, 1]^{N})^{\mathbb{Z}},\sigma)$. In \cite[Theorem 1.3]{lind4}, Lindenstrauss and Tsukamoto 
 constructed a minimal system with mean dimension equal to $N/2$ which cannot be
embedded into $(([0, 1]^{N})^{\mathbb{Z}},\sigma)$, showing that the constant $N/2$ obtained in \cite{Gutman} is optimal. 

\medskip

The notion of metric mean dimension for a dynamical system $\phi:(X,d)\to (X,d)$ was   introduced in \cite{lind},
where $(X,d)$ is a compact metric space with metric $d$  and $\phi$ is a continuous map. It refines the topological entropy for systems with infinite entropy,
which, in the case of a manifold of dimension greater than one,  form a residual subset of the set consisting of   homeomorphisms   defined on the manifold (see \cite{Yano}). In fact,
 every system with finite topological entropy has metric mean dimension equals to zero and
for any metric $d'$ equivalent to $d$ on $X$ one has $\text{mdim}(X,\phi)\leq \text{mdim}_M(X,\phi,d')$, where $\text{mdim}_M(X,\phi,d')$
denotes the metric mean dimension of $(X,\phi)$ with respect to  $d'$ (see \cite{lind2}, \cite{lind}). The metric mean dimension depends on the metric   $d$, therefore it is not a  topological invariant. However, for a metrizable topological space $X$,  $\text{mdim}_M(X,\phi)=\inf _{d'}\text{mdim}_M(X,\phi,d')$ is invariant under topological conjugacy, where the infimum  is taken over all the metrics on $X$ which induce the topology on $X$. In  \cite{lind2}, Theorem 4.3, the author  proved  that if $(X,\phi)$  is an extension of a minimal system, then there exists a metric $d^{\prime}$ on $X$, equivalent to $d$, 
such that $\text{mdim}(X,\phi) = \text{mdim}_{M}(X,\phi, d^{\prime})$.  

\medskip

B. Kloeckner (\cite{BK}) studied the dynamical system $(\mathcal{P}(\mathbb S^1),\Phi_{d\sharp})$, where
$\mathcal{P}(\mathbb S^1)$ is the space of probability measures on the circle $\mathbb S^1$ and
$\Phi_{d\sharp}$ is the push-forward map induced by a $d$-expanding map $\Phi_d:\mathbb S^1\to\mathbb S^1$.
The author shows   if we  take the Wasserstein metric with cost function
$|\cdot |^p$ ($p\in[1,\infty)$) on $\mathcal{P}(\mathbb S^1)$, denoted by $\mathcal{W}_p$,   then
$\text{mdim}_M(\mathcal{P}(\mathbb S^1),\Phi_{d\sharp},\mathcal{W}_p)\geq p(d-1)$. H. Lee (in \cite{HL})
introduced the mean dimension for continuous actions of countable sofic groups on compact metrizable spaces and proved that, in this setting, the  mean dimension is an important tool for distinguishing continuous actions of countable
sofic groups with infinite entropy.

\medskip

A \emph{non-autonomous dynamical system} (or a sequential dynamical system) is a sequence $\textit{\textbf{f}}=(f_n)_{n=1}^{\infty}$  of continuous maps $f_n : X_n \to X_{n+1}$, where $X_n$ is a compact topological space for every $n\in \mathbb N$.
In the last two decades, several authors have tried to extend some results that are valid for autonomous systems for the non-autonomous case. Kolyada and Snoda in \cite{K-S} introduced the notion  of topological  entropy for this setting and proved that, just as in the case of autonomous systems, it is an invariant under equiconjugacy and furthermore  that it is concentrated in the non-wandering set of the  dynamics   (see \cite{K-S} and \cite{JeoCTE}). In a more recent work, Freitas et al \cite{freitas} have analyzed the existence of Extreme Value Laws in this setting. In \cite{Sta} Stadlbauer guarantees, under appropriate conditions, the existence of a spectral gap for transference operators associated with sequential systems.

\medskip

As we said above, the set consisting of continuous maps with infinite topological entropy is residual. On the other hand, it is easy to build non-autonomous dynamical systems with infinite topological entropy (take $\phi$ a continuous map with positive topological entropy, then $(\phi,\phi^{2},\phi^{2^{2}},\phi^{2^3},\dots)$ is a non-autonomous dynamical systems with infinite topological entropy).  This is the main reason to extend the concepts of mean dimension  and metric mean dimension to non-autonomous systems, since these become  a tool to classify non-autonomous dynamical systems with infinite topological entropy (see Theorem \ref{edee344}).



\medskip

 {In the next two sections we will extend  the mean dimension and the metric mean dimension for  a non-autonomous dynamical system $ \textit{\textbf{f}}=(f_{n})_{n=1}^{\infty}$, which will be denoted by 
$ \text{mdim}(X,\textit{\textbf{f}}\,)$. Furthermore, we will prove  some properties   which are valid for  the entropy of non-autonomous dynamical systems  (see \cite{K-S} and \cite{JeoCTE}).  An application of these properties is that, for any continuous maps $\phi$ and $\psi$ on $X$, the    compositions $\phi\circ \psi$ and $\psi\circ \phi$  have the same mean dimension (see Corollary \ref{desdede}). Furthermore, Remark \ref{dgrdgrw} proves the inequality $\text{mdim}_M(X,\phi^{p},d)\leq p\, \text{mdim}_M(X,\phi,d)$ can be strict.  Proposition    \ref{jsffeke}  proves  if  $X=[0,1]$ or $\mathbb{S}^{1},$ then for each $a\in [0,1]$, there exists a continuous map $\phi_{a}$ on $X$  with metric mean dimension equals to $a$.  In Theorem \ref{thm:non-wond} we show that, as the topological entropy,  the metric mean dimension is concentrated in the non-wandering set of the  dynamics.}

\medskip

In Section 5 we will discuss some upper bounds for the metric mean dimension of both autonomous and non-autonomous dynamical systems.


\medskip

As we said above,  the metric mean dimension for single continuous maps, and consequently for non-autonomous dynamical systems,  
depends on the metric $d$.  In Section \ref{section6} we will discuss some properties related to the invariance of the metric mean dimension under topological  equiconjugacy. 

\medskip

In the last section we will present some results related to the continuity of the metric mean dimension.

\medskip

Some  ideas given to proof the results  that  are well-known for the autonomous case work or can be adapted for the non-autonomous case. We will present these  proofs for the sake of  comprehensiveness.

\section{Mean dimension for non-autonomous dynamical systems}

Let $X$ be a compact metric space. In this section we will suppose that  $\textit{\textbf{f}}=(f_n)_{n=1}^{\infty}$  is a  non-autonomous
dynamical system, where $f_n:X\to X$
is a continuous map for all $n\geq 1$. We write $(X,\textit{\textbf{f}},d)$ to denote a non-autonomous dynamical system \textit{\textbf{f}} on $X$ endowed with the metric $d$. For $n,k\in \mathbb N$ define
$$
f_n^{(0)}:=I_X:= \text{the identity on }X\quad\quad \text{and} \quad \quad  f_n^{(k)}(x):=f_{n+k-1}\circ \dots \circ f_n(x)\quad\text{for }k\geq 1.
$$

 Set
\[ \mathcal{C}(X)=\{ (f_{n})_{n=1}^{\infty}: f_{n}:X\rightarrow X \text{ is a continuous map}\}.\]

\medskip

Given $\alpha $ an open cover of $X$ define
\[
\alpha_{0}^{n-1}=\alpha \vee f^{-1}_1(\alpha)\vee (f_1^{(2)})^{-1}(\alpha)\vee \dots \vee (f_1^{(n-1)})^{-1}(\alpha)
\]
and set 
\[\text{ord}(\alpha)=\sup_{x\in X}\sum_{U\in \alpha }1_{U}(x)-1\quad \quad \text{ and }\quad \quad \mathcal D(\alpha )
                    =\min_{\beta  \succ\alpha}\text{ord}(\beta),\]
                   where $1_{U}$ is the indicator function and  $\beta \succ \alpha$ means that $\beta$ is an open cover  of $X$ finner than $\alpha$.
\begin{definition} The \textit{mean dimension} of $\textit{\textbf{f}}\in\mathcal{C}(X)$ is defined to be
\begin{equation*} 
\text{mdim}(X,\textit{\textbf{f}}\,)=\sup_{\alpha}\lim_{n\to\infty}\frac{\mathcal D(\alpha_{0}^{n-1})}{n}.
\end{equation*}\end{definition}

By Corollary 2.5 of \cite{lind} we have that $\mathcal D(\alpha\vee \beta)\leq \mathcal D(\alpha)+\mathcal D(\beta)$, for
any open covers $\alpha$ and $\beta$. It follows that the limit that defines the mean dimension is well defined.

\begin{remark}\label{tete} We present 
  a list of   some important properties  about the mean dimension for both autonomous and non-autonomous dynamical systems: 
\begin{enumerate}
\item For a non-autonomous dynamical system given by the iterates of a single continuous map $f:X\to X$, i.e.,
  $\textit{\textbf{f}}=(f)_{n=1}^{\infty}$, the definition of mean dimension coincides with
  the one presented in \cite{lind}, that is,
   \(  \text{mdim}(X,(f)_{n=1}^{\infty})= \text{mdim}(X,f).\)
\item Recall that for a topological space $X$, the \textit{topological dimension} is defined as
$$
\text{dim}(X)=\sup_{\alpha}\mathcal{D}(\alpha)
$$
where $\alpha $ runs the open covers of $X$. If $\text{dim}(X)<\infty$, then $\mathcal{D}(\alpha_{0}^{n-1})\leq \text{dim}(X)$
for all $n\in \mathbb N$ and therefore $\text{mdim}(X,\textit{\textbf{f}}\,)=0$ for any   $\textit{\textbf{f}}\in  \mathcal{C}(X)$.
\item  In  \cite{lind}, Proposition 3.1, is proved that  $ 
\text{mdim}(X^{\mathbb{Z}},\sigma)\leq \text{dim}( X),
$
 where $\sigma$ is the shift on $ X^{\mathbb{Z}}$. Analogously we can prove  $ 
\text{mdim}(X^{\mathbb{N}},\sigma)\leq \text{dim}( X).
$
\item If $X=[0,1]^{m}$,  then $ 
\text{mdim}(X^{\mathbb{Z}},\sigma)= m
$ 
(see \cite{lind},  Proposition 3.3).
\item It is clear that if $Y\subseteq X$ is an invariant subset by a continuous map $\phi:X\rightarrow X$, then $\text{mdim}(Y,\phi)\leq \text{mdim}(X,\phi)$. 
We can define the mean dimension for any $Y\subseteq X$ as follows: let $\alpha$ be  an open cover  of
$X$ and consider $\alpha|_Y=\{U\cap Y:U\in\alpha\}$, the open cover of $Y$ given by the restriction of $\alpha$ to $Y$.
Then define
$$
\text{mdim}(Y,\textit{\textbf{f}}\,|_Y)=\sup_{\alpha}\lim_{n\to\infty}\frac{\mathcal D((\alpha|_Y)_{0}^{n-1})}{n}.
$$
It is clear   that $\text{mdim}(Y,\textit{\textbf{f}}\,|_Y)\leq \text{mdim}(X,\textit{\textbf{f}}\,)$. 
\item  A necessary condition for an invertible dynamical system  $\phi:X\rightarrow X$  to be imbeddable in $(([0, 1]^{m})^{\mathbb{Z}},\sigma )$ is that $\text{mdim} (X,\phi)\leq  m$ (see \cite{lind}, Corollary 3.4).
\item Any  nontrivial factor of $([0, 1]^{\mathbb{Z}}, \sigma)$ has positive mean dimension  (see \cite{lind}, Theorem 3.6).
\end{enumerate}
\end{remark}
 
We will show some properties of the mean dimension which are   valid for the topological entropy. 
Denote by $h_{top}(\textit{\textbf{f}}\,)$ the topological entropy of $\textit{\textbf{f}}$ (see \cite{K-S}, \cite{JeoCTE}).

\begin{definition}\label{composision} For any $p\geq 1$, set  $$ {\textit{\textbf{f}}^{\,(p)}=\{f_1^{(p)},f_{p+1}^{(p)},f_{2p+1}^{(p)},\dots\}=   \{ f_{p}\circ\cdots \circ f_{1}, f_{2p}\circ\cdots \circ  f_{p+1},f_{3p}\circ\cdots \circ  f_{2p+1},\dots \}}.$$\end{definition}

It is well-known that    $  
 h_{top}(\phi^{p})= p\, h_{top}(\phi)$ for any $p\geq1 $, where $\phi$ is any continuous map. For non-autonomous dynamical systems we have  $$
 h_{top}(\textit{\textbf{f}}^{\,(p)})\leq p\, h_{top}(\textit{\textbf{f}}\,)\quad \text{for any }p\geq 1
  $$ (see \cite{K-S}, Lemma 4.2).  In general, the equality $
 h_{top}(\textit{\textbf{f}}^{\,(p)})= p\, h_{top}(\textit{\textbf{f}}\,)$ is not valid, as we can see in the next example, which was given by Kolyada and Snoha in \cite{K-S}.  
 
 \begin{example} Take $\psi: [0,1]\rightarrow [0,1]$ defined by $ \psi(x)=1-|2x-1|$ for any $x\in [0,1]$. Consider $\textbf{\textit{f}}=(f_{n})_{n=1}^{\infty}$, where  
 $$
f_n(x)= \begin{cases}
    \psi^{(n+1)/2}(x), &  \text{ if }n \text{ is odd}, \\
     x/2^{n/2}, & \hbox{ if }n\text{ is even},
      \end{cases}
$$ for any $n\in\mathbb{N}$. 
Then $h_{top}(\textbf{\textit{f}}^{(2)})= 0 $ and   $h_{top}(\textbf{\textit{f}}) \geq \frac{\log 2}{2}$.
 \end{example}
 The equality $
 h_{top}(\textit{\textbf{f}}^{\,(p)})= p\, h_{top}(\textit{\textbf{f}}\,)$ is valid  if the sequence $\textit{\textbf{f}}=(f_{n})_{n=1}^{\infty}$ is equicontinuous (see \cite{K-S}, Lemma 4.4). On the other hand, the equality always holds for the mean dimension.

\begin{proposition}\label{pro:meandimension-periodic}
  For any  $\textit{\textbf{f}}=(f_{n})_{n=1}^{\infty}\in  \mathcal{C}(X)$ and $p\in \mathbb{N}$   we have
  $$
  \emph{mdim}(X,\textit{\textbf{f}}^{\,(p)})=p\, \emph{mdim}(X,\textit{\textbf{f}}\,).
  $$
\end{proposition}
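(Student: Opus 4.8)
The plan is to prove the two inequalities $\text{mdim}(X,\textit{\textbf{f}}^{\,(p)})\le p\,\text{mdim}(X,\textit{\textbf{f}}\,)$ and $\text{mdim}(X,\textit{\textbf{f}}^{\,(p)})\ge p\,\text{mdim}(X,\textit{\textbf{f}}\,)$ separately. The starting remark is that if $g_1,g_2,\dots$ denote the maps of $\textit{\textbf{f}}^{\,(p)}$ then $g_1^{(k)}=g_k\circ\cdots\circ g_1=f_1^{(kp)}$, so for an open cover $\alpha$ of $X$ the refinement built from $\alpha$ under $\textit{\textbf{f}}^{\,(p)}$ is $\alpha\vee (f_1^{(p)})^{-1}(\alpha)\vee\cdots\vee (f_1^{((n-1)p)})^{-1}(\alpha)$. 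For the upper bound I would observe that this is the join of a subfamily of the covers appearing in $\alpha_0^{(n-1)p}$ (formed under $\textit{\textbf{f}}$, which joins the pullbacks over all indices $0,1,\dots,(n-1)p$, not only the multiples of $p$), hence coarser than $\alpha_0^{(n-1)p}$; since $\mathcal D$ is monotone under refinement, $\mathcal D$ of the former is at most $\mathcal D(\alpha_0^{(n-1)p})$. Dividing by $n$, letting $n\to\infty$ (using $\tfrac{(n-1)p+1}{np}\to1$ and that the limit defining the mean dimension exists), and taking the supremum over $\alpha$ gives $\text{mdim}(X,\textit{\textbf{f}}^{\,(p)})\le p\,\text{mdim}(X,\textit{\textbf{f}}\,)$.

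For the lower bound, given $\alpha$ I would use the ``first block'' $\beta=\alpha_0^{p-1}=\alpha\vee f_1^{-1}(\alpha)\vee\cdots\vee (f_1^{(p-1)})^{-1}(\alpha)$ as the test cover for $\textit{\textbf{f}}^{\,(p)}$. Distributing pullback over $\vee$ and using again $g_1^{(j)}=f_1^{(jp)}$, the refinement built from $\beta$ under $\textit{\textbf{f}}^{\,(p)}$ equals $\bigvee_{j=0}^{n-1}\bigvee_{i=0}^{p-1}\big(f_1^{(i)}\circ f_1^{(jp)}\big)^{-1}(\alpha)$. The goal is to compare this with $\alpha_0^{np-1}=\bigvee_{l=0}^{np-1}(f_1^{(l)})^{-1}(\alpha)$ built under $\textit{\textbf{f}}$, obtaining $\mathcal D\big(\beta_0^{n-1}\text{ for }\textit{\textbf{f}}^{\,(p)}\big)\ge \mathcal D\big(\alpha_0^{np-1}\text{ for }\textit{\textbf{f}}\,\big)$; then $\tfrac1n\mathcal D(\beta_0^{n-1})\ge p\cdot\tfrac1{np}\mathcal D(\alpha_0^{np-1})$, and passing to the limit (the value along the subsequence $m=np$ equals the full limit, which exists) and taking the supremum over $\alpha$ would finish the proof.

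The delicate step is precisely this comparison in the lower bound. By Remark~\ref{tete}(2) one may assume $\text{dim}\,X=\infty$, since otherwise both sides of the asserted identity vanish. In the autonomous case $f_n\equiv f$ the comparison is immediate, because $f_1^{(i)}\circ f_1^{(jp)}=f^{jp+i}=f_1^{(jp+i)}$, so $\beta_0^{n-1}$ for $f^p$ is literally $\alpha_0^{np-1}$ for $f$. For a genuinely non-autonomous sequence this identity fails: $f_1^{(i)}\circ f_1^{(jp)}$ post-composes with $f_1,\dots,f_i$ rather than with the ``correct'' maps $f_{jp+1},\dots,f_{jp+i}=f_{jp+1}^{(i)}$. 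I therefore expect the main work to be controlling this mismatch: showing that the discrepancy between $\bigvee_{j,i}\big(f_1^{(i)}\circ f_1^{(jp)}\big)^{-1}(\alpha)$ and $\alpha_0^{np-1}$ costs only $o(n)$ in $\mathcal D$ — for instance by a more careful choice of test cover for $\textit{\textbf{f}}^{\,(p)}$, or by a direct estimate using the subadditivity $\mathcal D(\alpha\vee\beta)\le\mathcal D(\alpha)+\mathcal D(\beta)$ to bound the contribution of the indices not divisible by $p$. This is exactly the point that has no analogue for the topological entropy, where the corresponding inequality is strict (as the Kolyada--Snoha example recalled above shows), so it carries the real content of the proposition; everything else — the monotonicity and subadditivity of $\mathcal D$ and the Cesàro/subsequence manipulations — is routine.
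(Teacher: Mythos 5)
Your first inequality is exactly the paper's argument: the cover generated by $\alpha$ under $\textit{\textbf{f}}^{\,(p)}$ joins only the pullbacks along the indices $0,p,\dots,(k-1)p$, hence is coarser than $\alpha_0^{(k-1)p}$, and monotonicity of $\mathcal D$ plus the normalization $\tfrac1k=p\cdot\tfrac1{kp}$ gives $\text{mdim}(X,\textit{\textbf{f}}^{\,(p)})\le p\,\text{mdim}(X,\textit{\textbf{f}}\,)$; this half is complete and correct. The genuine gap is that the reverse inequality is never proved. You correctly compute that with the test cover $\beta=\alpha_0^{p-1}$ the cover generated under $\textit{\textbf{f}}^{\,(p)}$ is $\bigvee_{j=0}^{n-1}\bigvee_{i=0}^{p-1}\bigl(f_1^{(i)}\circ f_1^{(jp)}\bigr)^{-1}(\alpha)$, which refines $\alpha_0^{np-1}=\bigvee_{j,i}\bigl(f_{jp+1}^{(i)}\circ f_1^{(jp)}\bigr)^{-1}(\alpha)$ only when $f_{jp+1}^{(i)}=f_1^{(i)}$, i.e.\ essentially for autonomous or $p$-periodic sequences; but at that point you stop, announcing that ``controlling the mismatch'' is the main work without supplying any estimate. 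Neither of the two routes you hint at obviously closes it: there is no single open cover $\beta$ refining all the covers $\bigvee_{i<p}(f_{jp+1}^{(i)})^{-1}(\alpha)$ simultaneously (the infinite join need not be an open cover), and subadditivity only bounds $\mathcal D(\alpha_0^{np-1})$ from above by $\mathcal D\bigl(\bigvee_j(f_1^{(jp)})^{-1}\alpha\bigr)+n(p-1)\mathcal D(\alpha)$, an error of order $n\,\mathcal D(\alpha)$ rather than $o(n)$, which does not disappear after dividing by $np$ and taking the supremum over $\alpha$. So as written the proposal establishes only one half of the proposition.

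For comparison, the paper's proof of the missing direction consists precisely of asserting the block identity $\alpha_0^{kp-1}=\alpha_0^{p-1}\vee(f_1^{(p)})^{-1}(\alpha_0^{p-1})\vee\cdots\vee(f_1^{((k-1)p)})^{-1}(\alpha_0^{p-1})$ and dividing by $kp$ — that is, exactly the identity you point out fails for genuinely non-autonomous sequences, since the $j$-th block on the right pulls $\alpha$ back through $f_1^{(i)}\circ f_1^{(jp)}$ rather than through $f_1^{(jp+i)}$. The identity is exact when $f_{n+p}=f_n$ for all $n$ (in particular in the autonomous case and in the $2$-periodic situation used in Corollary \ref{desdede}), but the proposition is stated for arbitrary $\textit{\textbf{f}}\in\mathcal{C}(X)$. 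So your diagnosis is accurate and in fact isolates the step that the paper's own proof does not justify; but diagnosing the obstruction is not the same as overcoming it. To finish you would have to either produce the missing comparison for general $\textit{\textbf{f}}$ (or a counterexample showing the lower bound can fail), or restrict the claim to $p$-periodic sequences, where your block decomposition becomes exact and your argument goes through verbatim.
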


\begin{proof}
Let  $\alpha$ be an open cover of $X$. Note that, for $k\in \mathbb N$,
\[
\lim_{k\to\infty}\frac{\mathcal D(\alpha\vee (f_1^{(p)})^{-1}(\alpha)\vee\dots \vee (f_1^{((k-1)p)})^{-1}(\alpha))}{k}\leq
\lim_{k\to\infty} \frac{\mathcal D(\alpha_0^{(k-1)p})}{k}\leq p  \lim_{k\to\infty} \frac{\mathcal D(\alpha_0^{(kp-1)})}{kp},
\]
which implies that  $\text{mdim}(X,\textit{\textbf{f}}^{\, (p)})\leq p\ \text{mdim}(X,\textit{\textbf{f}}\,)$. 
For the converse, note   that
\begin{align*}
  \alpha_{0}^{kp-1} & = \alpha_{0}^{p-1}\vee (f_1^{(p)})^{-1}(\alpha_{0}^{p-1})\vee (f_1^{(2p)})^{-1}(\alpha_{0}^{p-1})\vee\dots\vee (f_1^{((k-1)p)})^{-1}(\alpha_{0}^{p-1}) ,
\end{align*}
and therefore
\[
\text{mdim}(X,\textit{\textbf{f}}\,)=\sup_{\alpha}\lim_{k\to\infty}\frac{\mathcal D(\alpha_0^{(k-1)p})}{kp}\leq  \frac{\text{mdim}(X,\textit{\textbf{f}}^{\,(p)})}{p},
\]
which proves the proposition.
\end{proof}

  In  \cite{K-S}, Lemma 4.5, Kolyada and Snoha proved that $$ h_{top}(\sigma^{i}(\textit{\textbf{f}}\,))\leq h_{top}(\sigma^{j}(\textit{\textbf{f}}\,))\quad \text{for any }i\leq j,$$  where $\sigma$ is the left shift   $\sigma ((f_n)_{n=1}^{\infty})=(f_{n+1})_{n=1}^{\infty}$.  Furthermore, in \cite{JeoCTE},  Corollary 5.6, the author showed that if each $f_{n}$ is an homeomorphism then the equality holds, that is, the topological entropy for non-autonomous dynamical systems is independent on the   first maps on a sequence of homeomorphisms  $\textit{\textbf{f}}=(f_{n})_{n\in\mathbb{Z}}$.
Next proposition shows that these properties also hold for the mean dimension.

\begin{proposition}\label{pro:shift}
Let $i,j$ be two positive integers with $i\leq j$. Then
 $$ \emph{mdim}(X,\sigma^i(\textit{\textbf{f}}\,))\leq  \emph{mdim}(X,\sigma^j(\textit{\textbf{f}}\,)).$$ If each $f_{n}$ is a homeomorphism then the equality holds. 
\end{proposition}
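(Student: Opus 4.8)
The plan is to reduce everything to a one-step shift and then exploit a simple algebraic identity between the dynamical refinements of an open cover for $\textit{\textbf{f}}$ and for $\sigma(\textit{\textbf{f}}\,)$, using only the subadditivity $\mathcal D(\al\vee\be)\le\mathcal D(\al)+\mathcal D(\be)$ and the monotonicity of $\mathcal D$ (both recorded above, cf.\ \cite{lind}), plus the elementary inequality $\mathcal D(g^{-1}(\al))\le\mathcal D(\al)$, valid for every continuous $g\colon X\to X$ and every open cover $\al$ — this follows at once from $\text{ord}(g^{-1}(\be))\le\text{ord}(\be)$ and from the fact that $g^{-1}(\be)\succ g^{-1}(\al)$ whenever $\be\succ\al$. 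Since $\sigma^{j}(\textit{\textbf{f}}\,)=\sigma^{\,j-i}(\sigma^{i}(\textit{\textbf{f}}\,))$, it suffices, by a finite induction over the indices $i,i+1,\dots,j-1$, to prove the following claim: for every $\textit{\textbf{g}}=(g_{n})_{n=1}^{\infty}\in\mathcal C(X)$ one has $\text{mdim}(X,\textit{\textbf{g}}\,)\le\text{mdim}(X,\sigma(\textit{\textbf{g}}\,))$, and equality holds when $g_{1}$ is a homeomorphism.

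Write $\textit{\textbf{g}}^{\,\prime}=\sigma(\textit{\textbf{g}}\,)=(g_{2},g_{3},\dots)$ and, for an open cover $\al$, let $\al_{0}^{n-1}$ and $(\al^{\prime})_{0}^{n-1}$ denote the dynamical refinements of $\al$ associated with $\textit{\textbf{g}}$ and with $\textit{\textbf{g}}^{\,\prime}$. Unwinding the definition of $f_{n}^{(k)}$ and using $g_{1}^{(k)}=g_{2}^{(k-1)}\circ g_{1}$ for $k\ge 1$, one checks directly that
\[
\al_{0}^{n-1}=\al\vee g_{1}^{-1}\big((\al^{\prime})_{0}^{n-2}\big)\qquad\text{for all }n.
\]
Hence, by subadditivity and by $\mathcal D(g_{1}^{-1}(\cdot))\le\mathcal D(\cdot)$,
\[
\mathcal D(\al_{0}^{n-1})\le\mathcal D(\al)+\mathcal D\big(g_{1}^{-1}((\al^{\prime})_{0}^{n-2})\big)\le\mathcal D(\al)+\mathcal D\big((\al^{\prime})_{0}^{n-2}\big).
\]
Dividing by $n$ and letting $n\to\infty$ — the term $\mathcal D(\al)/n$ vanishes, and reindexing $n\mapsto n-2$ does not affect the limit, which exists by subadditivity — gives $\lim_{n}\mathcal D(\al_{0}^{n-1})/n\le\lim_{m}\mathcal D((\al^{\prime})_{0}^{m-1})/m\le\text{mdim}(X,\sigma(\textit{\textbf{g}}\,))$; taking the supremum over $\al$ yields $\text{mdim}(X,\textit{\textbf{g}}\,)\le\text{mdim}(X,\sigma(\textit{\textbf{g}}\,))$.

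For the equality, suppose $g_{1}$ is a homeomorphism. Then $g_{1}$ is open, so $g_{1}(\ga):=\{g_{1}(U):U\in\ga\}$ is an open cover whenever $\ga$ is, and $\mathcal D(g_{1}(\ga))=\mathcal D(\ga)$ (apply $\mathcal D(h^{-1}(\cdot))\le\mathcal D(\cdot)$ to $h=g_{1}$ and to $h=g_{1}^{-1}$, noting $g_{1}(\ga)=(g_{1}^{-1})^{-1}(\ga)$). Since $g_{1}$ is bijective, applying $g_{1}$ to the identity above in the form $\al_{0}^{n}=\al\vee g_{1}^{-1}((\al^{\prime})_{0}^{n-1})$ gives $g_{1}(\al_{0}^{n})=g_{1}(\al)\vee(\al^{\prime})_{0}^{n-1}$; thus $(\al^{\prime})_{0}^{n-1}$ is coarser than $g_{1}(\al_{0}^{n})$, so by monotonicity $\mathcal D((\al^{\prime})_{0}^{n-1})\le\mathcal D(g_{1}(\al_{0}^{n}))=\mathcal D(\al_{0}^{n})$. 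Dividing by $n$, letting $n\to\infty$ and taking the supremum over $\al$ gives $\text{mdim}(X,\sigma(\textit{\textbf{g}}\,))\le\text{mdim}(X,\textit{\textbf{g}}\,)$. Iterating the claim over the maps $f_{i+1},\dots,f_{j}$ (which are homeomorphisms under the extra hypothesis) completes the proof.

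The only point requiring genuine care is purely combinatorial: verifying the cover identity $\al_{0}^{n-1}=\al\vee g_{1}^{-1}((\al^{\prime})_{0}^{n-2})$ by bookkeeping with the compositions $f_{n}^{(k)}$, and checking that the index shifts $n\leftrightarrow n-1\leftrightarrow n-2$ leave the limits unchanged — which they do, since the defining limits exist. Everything else (subadditivity, monotonicity, the behaviour of $\mathcal D$ under preimages and under homeomorphisms) is either quoted from \cite{lind} or immediate from the definitions.
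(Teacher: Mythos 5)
Your proof is correct and follows essentially the same route as the paper: reduce to a single shift, decompose $\alpha_{0}^{n-1}$ as $\alpha\vee f_{1}^{-1}$ of the refinement for the shifted sequence, use subadditivity of $\mathcal D$ together with its behaviour under preimages for one direction, and use homeomorphism-invariance of $\mathcal D$ plus monotonicity under refinement for the reverse. The only (cosmetic) differences are that you phrase the equality step by pushing the cover identity forward with $g_{1}$ rather than pulling back by $f_{1}^{-1}$, and you correctly use the inequality $\mathcal D(g_{1}^{-1}(\cdot))\leq\mathcal D(\cdot)$ where the paper writes an equality that is only justified in the homeomorphism case.
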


\begin{proof}
 It is enough to prove the proposition for $i=0$ and $j=1$. For  any open cover $\alpha $ of $X$ we have
\begin{align*}
  \mathcal D(\alpha_{0}^{n-1})&\leq \mathcal D(\alpha) +\mathcal D(f_1^{-1}(\alpha\vee (f_2)^{-1}(\alpha)\vee (f_2^{(2)})^{-1}(\alpha)\vee\dots\vee (f_2^{(n-2)})^{-1}(\alpha)))\\
                              &=\mathcal D(\alpha)+\mathcal D(\alpha\vee (f_2)^{-1}(\alpha)\vee (f_2^{(2)})^{-1}(\alpha)\vee\dots \vee(f_2^{(n-2)})^{-1}(\alpha)).
\end{align*}
Thus
\begin{align*}
\lim_{n\to\infty}\frac{ \mathcal{D}(\alpha_{0}^{n-1})}{n}
                     &\leq \lim_{n\to\infty}\frac{ \mathcal{D}(\alpha)}{n}+ \lim_{n\to\infty}\frac{\mathcal D(\alpha\vee f_2^{-1}(\alpha)\vee (f_2^{(2)})^{-1}(\alpha)\vee\dots\vee (f_2^{(n-2)})^{-1}(\alpha))}{n}\\
                     &= \lim_{n\to\infty}\frac{n-1}{n}\frac{\mathcal D(\alpha\vee f_2^{-1}(\alpha)\vee (f_2^{(2)})^{-1}(\alpha)\vee\dots\vee (f_2^{(n-2)})^{-1}(\alpha))}{n-1}\\
                     &\leq\text{mdim}(X,\sigma(\textit{\textbf{f}}\,)),
\end{align*}
and therefore  $\text{mdim}(X,\textit{\textbf{f}}\,)\leq \text{mdim}(X,\sigma(\textit{\textbf{f}}\,)).$

Next, suppose that each $f_{n}$ is a homeomorphism.  Note that if $\beta $ refines $\alpha$ then $ \mathcal{D}(\beta)\geq  \mathcal{D}(\alpha)$.  Therefore, we have
\begin{align*}
 \mathcal D(\alpha\vee (f_2)^{-1}(\alpha)\vee (f_2^{(2)})^{-1}(\alpha)\vee\dots ) &=  \mathcal D(f_1^{-1}(\alpha\vee (f_2)^{-1}(\alpha)\vee (f_2^{(2)})^{-1}(\alpha)\vee\dots ))\\
                              &= \mathcal D((f_{1})^{-1}(\alpha)\vee (f_1^{(2)})^{-1}(\alpha)\vee (f_1^{(3)})^{-1}(\alpha)\vee\dots )\\
                              &\leq \mathcal D(\alpha \vee(f_{1})^{-1}(\alpha)\vee (f_1^{(2)})^{-1}(\alpha)\vee (f_1^{(3)})^{-1}(\alpha)\vee\dots ).
\end{align*}
Hence $\text{mdim}(X,\sigma(\textit{\textbf{f}}\,))\leq \text{mdim}(X, \textit{\textbf{f}}\,)$.
\end{proof}

If some $f_{n}$ is not a  homeomorphism, then the inequality above can be strict. In fact, take $f_{n}=f:X\rightarrow X$ for any $n\geq 2$, where $f$ is any continuous  map with positive mean dimension and $f_{1}:X\rightarrow X$ a constant map. Then  $\text{mdim}(X, \textit{\textbf{f}}\,)=0$ and $ \text{mdim}(X, \sigma(\textit{\textbf{f}}\,))= \text{mdim}(X, f)$. 

\medskip

Next corollary follows from Propositions \ref{pro:meandimension-periodic} and   \ref{pro:shift}:

\begin{corollary}\label{desdede}
  Let $\textit{\textbf{f}}=(f,g,f,g,\dots)$ and $\textit{\textbf{g}}=(g,f,g,f,\dots)$, where $f,g:X\to X$ are continuous maps.
  Then
  \[
   \emph{mdim}(X,\textit{\textbf{f}}\,)=\emph{mdim}(X,\textit{\textbf{g}}).
  \]
Therefore,  $$\emph{mdim}(X,f\circ g)=\emph{mdim}(X,g\circ f).$$\end{corollary}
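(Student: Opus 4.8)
The plan is to deduce the corollary directly from the two preceding propositions, with no new cover-theoretic work. First I would observe that $\textit{\textbf{f}}=(f,g,f,g,\dots)$ and $\textit{\textbf{g}}=(g,f,g,f,\dots)$ are related by a shift: indeed $\sigma(\textit{\textbf{f}}\,)=(g,f,g,f,\dots)=\textit{\textbf{g}}$ and likewise $\sigma(\textit{\textbf{g}})=\textit{\textbf{f}}$. Applying Proposition \ref{pro:shift} with $i=0$, $j=1$ to $\textit{\textbf{f}}$ gives $\text{mdim}(X,\textit{\textbf{f}}\,)\le\text{mdim}(X,\sigma(\textit{\textbf{f}}\,))=\text{mdim}(X,\textit{\textbf{g}})$, and applying it to $\textit{\textbf{g}}$ gives $\text{mdim}(X,\textit{\textbf{g}})\le\text{mdim}(X,\sigma(\textit{\textbf{g}}))=\text{mdim}(X,\textit{\textbf{f}}\,)$. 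Hence $\text{mdim}(X,\textit{\textbf{f}}\,)=\text{mdim}(X,\textit{\textbf{g}})$. Note this step uses only the inequality half of Proposition \ref{pro:shift}, so no homeomorphism hypothesis is needed.

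For the second assertion, I would compute the $p$-th composition with $p=2$. By Definition \ref{composision}, $\textit{\textbf{f}}^{\,(2)}=(f_2\circ f_1,\ f_4\circ f_3,\ \dots)=(g\circ f,\ g\circ f,\ \dots)$, the constant sequence associated with the single map $g\circ f$; similarly $\textit{\textbf{g}}^{\,(2)}=(f\circ g,\ f\circ g,\ \dots)$. By Remark \ref{tete}(1), $\text{mdim}(X,\textit{\textbf{f}}^{\,(2)})=\text{mdim}(X,g\circ f)$ and $\text{mdim}(X,\textit{\textbf{g}}^{\,(2)})=\text{mdim}(X,f\circ g)$. Now Proposition \ref{pro:meandimension-periodic} with $p=2$ gives $\text{mdim}(X,\textit{\textbf{f}}^{\,(2)})=2\,\text{mdim}(X,\textit{\textbf{f}}\,)$ and $\text{mdim}(X,\textit{\textbf{g}}^{\,(2)})=2\,\text{mdim}(X,\textit{\textbf{g}})$. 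Combining these with the equality $\text{mdim}(X,\textit{\textbf{f}}\,)=\text{mdim}(X,\textit{\textbf{g}})$ from the first part yields $2\,\text{mdim}(X,g\circ f)=2\,\text{mdim}(X,f\circ g)$, hence $\text{mdim}(X,f\circ g)=\text{mdim}(X,g\circ f)$.

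There is essentially no obstacle here: the entire argument is bookkeeping on how the periodic sequence behaves under $\sigma$ and under the $p$-block operation $\textit{\textbf{f}}\mapsto\textit{\textbf{f}}^{\,(p)}$. The only point requiring a moment's care is the indexing in Definition \ref{composision} — confirming that the first block of $\textit{\textbf{f}}^{\,(2)}$ is $f_2\circ f_1=g\circ f$ rather than $f\circ g$, and correspondingly that starting the sequence with $g$ produces the blocks $f\circ g$. Once the indices are lined up, the chain of equalities closes immediately.
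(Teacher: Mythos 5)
Your proposal is correct and follows essentially the same route as the paper: the shift inequality of Proposition \ref{pro:shift} applied to both $\textit{\textbf{f}}$ and $\textit{\textbf{g}}$ gives $\text{mdim}(X,\textit{\textbf{f}}\,)=\text{mdim}(X,\textit{\textbf{g}})$, and Proposition \ref{pro:meandimension-periodic} with $p=2$ then transfers this to $f\circ g$ and $g\circ f$. Your care with the indexing in Definition \ref{composision} (so that $\textit{\textbf{f}}^{\,(2)}$ is the constant sequence of $g\circ f$, not $f\circ g$) is in fact slightly more precise than the paper's own display, though the conclusion is unaffected since the statement is symmetric in $f$ and $g$.
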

\begin{proof}
It follows directly from Proposition \ref{pro:shift} that $\text{mdim}(X,\textit{\textbf{f}}\,)=\text{mdim}(X,\textit{\textbf{g}})$. Now, by Proposition \ref{pro:meandimension-periodic}   we have
\begin{align*}
  \text{mdim}(X,f\circ g)&=\text{mdim}(X,\textit{\textbf{f}}^{\,(2)})=2\,\text{mdim}(X,\textit{\textbf{f}}\,)=2\,\text{mdim}(X,\textit{\textbf{g}})\\
  &=\text{mdim}(X,\textit{\textbf{g}}^{\,(2)})= \text{mdim}(X,g\circ f),
  \end{align*}
  which proves the corollary.
\end{proof}

 It follows directly from Corollary \ref{desdede} that if  $f$ and $g$  are topologically conjugate continuous maps, then
  \[\text{mdim}(X,f)=\text{mdim}(X,g),\]
 since if $\phi$ is a topological conjugacy between $f$ and $g$, that is, $ \phi$ is a homeomorphism and $ \phi \circ f=g\circ \phi$, then
 \[\text{mdim}(X,f)=\text{mdim}(X,\phi^{-1}\circ \phi \circ f)=\text{mdim}(X, \phi \circ f\circ \phi^{-1})=\text{mdim}(X,g).\]

For any $\textit{\textbf{f}}=(f_n)_{n=1}^{\infty}\in \mathcal{C}(X)$, the \emph{asymptotic  mean dimension} is defined by the limit 
\begin{align*}\label{eq:assymp1}
\text{mdim}(X,\textit{\textbf{f}}\,)^*=\lim_{n\to\infty} \text{mdim}(X,\sigma^n(\textit{\textbf{f}}\,)).
\end{align*}
It follows from Proposition \ref{pro:shift} that the asymptotic  mean dimension always  exists. 

\begin{theorem}\label{prop:unif-limit}
 Let $\textit{\textbf{f}}=(f_n)_{n=1}^{\infty}\in  \mathcal{C}(X)$. If $\textit{\textbf{f}}$
  converges uniformly to a continuous map $f:X\to X$, then
\[
  \emph{mdim}(X,\textit{\textbf{f}}\,)^*\leq \emph{mdim}(X,f).
  \]
In particular, $\emph{mdim}(X,\textit{\textbf{f}}\,)\leq \emph{mdim}(X,f).$ 
\end{theorem}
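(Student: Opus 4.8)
The plan is to compare, for a fixed open cover $\alpha$ of $X$, the iterated cover $\alpha_0^{n-1}$ built from the tail sequence $\sigma^N(\textit{\textbf{f}}\,)=(f_{N+1},f_{N+2},\dots)$ against the cover built from the constant sequence $(f)_{n=1}^{\infty}$, for $N$ large. The key observation is that $\mathcal D(\beta)$ depends only on the combinatorial pattern of intersections of the sets in $\beta$, so if we can arrange each $(f_{N+1}^{(k)})^{-1}(\alpha)$ to be refined by (or to refine, up to a controlled error) $(f^{(k)})^{-1}(\alpha')$ for a slightly shrunken cover $\alpha'$, then the subadditivity $\mathcal D(\beta\vee\gamma)\le\mathcal D(\beta)+\mathcal D(\gamma)$ (Corollary 2.5 of \cite{lind}) lets us bound the mean dimension of the tail by that of $f$. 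Concretely, I would fix $\alpha$, pass to a Lebesgue number $\delta>0$ for $\alpha$, choose $\alpha'$ an open cover refining $\alpha$ whose elements are $\delta/3$-neighborhoods shrunk slightly, and use uniform convergence to pick $N$ so that $d(f_n(x),f(x))<\delta/(3\cdot 2^{\,?})$ for all $n>N$ — but the compounding of errors under composition is the subtlety, so I will instead work cover-by-cover in the join and absorb finitely many ``bad'' coordinates.

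The cleaner route, and the one I would actually carry out: fix $\alpha$ and $\varepsilon>0$. Since the limit defining $\text{mdim}(X,f)$ is a supremum over covers of limits, it suffices to show that for each $\alpha$ there is an open cover $\alpha^*$ of $X$ with
\[
\lim_{n\to\infty}\frac{\mathcal D\bigl((\sigma^N\textit{\textbf{f}}\,)_0^{n-1}(\alpha)\bigr)}{n}\le \lim_{n\to\infty}\frac{\mathcal D\bigl((f)_0^{n-1}(\alpha^*)\bigr)}{n}
\]
for all sufficiently large $N$, where I write $(\cdots)_0^{n-1}(\alpha)$ for the relevant iterated join. To get this, split the join $(\sigma^N\textit{\textbf{f}}\,)_0^{n-1}(\alpha)=\alpha\vee f_{N+1}^{-1}(\alpha)\vee\cdots$ and compare each factor $(f_{N+1}^{(k)})^{-1}(\alpha)$ with $(f^{(k)})^{-1}(\alpha)$: by uniform convergence and uniform continuity of $f$, for any finite window $k\le K$ one can make these two covers mutually $\mathcal D$-comparable once $N$ is large. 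For the infinitely many $k>K$ I would use a telescoping argument together with the already-established Proposition \ref{pro:shift} (monotonicity in the shift) to reduce to a finite computation. Once $\text{mdim}(X,\sigma^N(\textit{\textbf{f}}\,))\le\text{mdim}(X,f)+\varepsilon$ holds for $N$ large, letting $N\to\infty$ and then $\varepsilon\to 0$ gives $\text{mdim}(X,\textit{\textbf{f}}\,)^*\le\text{mdim}(X,f)$. The ``in particular'' clause is immediate from Proposition \ref{pro:shift}, which gives $\text{mdim}(X,\textit{\textbf{f}}\,)\le\text{mdim}(X,\sigma^n(\textit{\textbf{f}}\,))$ for all $n$, hence $\text{mdim}(X,\textit{\textbf{f}}\,)\le\text{mdim}(X,\textit{\textbf{f}}\,)^*\le\text{mdim}(X,f)$.

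The main obstacle I anticipate is controlling the error propagation through the compositions $f_{N+1}^{(k)}$ versus $f^{(k)}$ uniformly in $k$: a naive estimate loses a factor growing with $k$ because the modulus of continuity of $f$ is applied $k$ times. The fix is not to demand closeness of $f_{N+1}^{(k)}$ to $f^{(k)}$ for large $k$, but rather to exploit that $\mathcal D$ of a join over a long tail is governed, up to the leading order in $n$, by finitely many ``fresh'' coordinates at a time — formally, to re-derive the limit using the identity $\alpha_0^{n-1}=\alpha_0^{p-1}\vee(f_1^{(p)})^{-1}(\alpha_0^{p-1})\vee\cdots$ (as in the proof of Proposition \ref{pro:meandimension-periodic}) with a block length $p$ chosen first, and only then choosing $N$ large relative to $p$ so that within each block the non-autonomous and autonomous dynamics are $\mathcal D$-indistinguishable at scale $\alpha$. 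Making this block decomposition interact correctly with the supremum over $\alpha$ is the delicate bookkeeping step; everything else is subadditivity of $\mathcal D$ and the definitions.
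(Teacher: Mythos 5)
Your proposal is essentially correct, but it proves the theorem by a genuinely different route than the paper. The paper's proof is structural: it takes a sequence $x_n\to x_0$ of distinct points and builds a single continuous map $F(x,y)=(\phi(x),\psi(x,y))$ on $\{x_n:n\geq 0\}\times X$ whose fiber maps are $f_n$ over $x_n$ and $f$ over $x_0$ (continuity of $F$ is precisely where uniform convergence is used); since $\Omega(F)\subset\{x_0\}\times X$, Lemma 7.2 of \cite{GTM} (mean dimension of a single map is carried by its non-wandering set) yields $\text{mdim}(\{x_n:n\geq0\}\times X,F)=\text{mdim}(\{x_0\}\times X,F)=\text{mdim}(X,f)$, and restricting to $\{x_m:m\geq k\}\times X$ gives $\text{mdim}(X,\sigma^k(\textit{\textbf{f}}\,))\leq\text{mdim}(X,f)$ for every $k$. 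You instead compare covers directly, and your plan can be carried out: fix $\alpha$, shrink it via a Lebesgue number to a cover $\alpha'$ whose $\eta$-neighborhoods sit inside elements of $\alpha$, choose the block length $p$ \emph{first} so that $\mathcal D\bigl((\alpha')_0^{p-1}\text{ for }f\bigr)/p\leq\text{mdim}(X,f)+\varepsilon$ (possible since that sequence is subadditive), and only then choose $N$ so that every composition $f_m^{(i)}$ with $m>N$ and $i<p$ is uniformly $\eta$-close to $f^{i}$; then $(f^{i})^{-1}(\alpha')$ refines $(f_m^{(i)})^{-1}(\alpha)$, and the block decomposition together with $\mathcal D(\beta\vee\gamma)\leq\mathcal D(\beta)+\mathcal D(\gamma)$ and $\mathcal D(h^{-1}\gamma)\leq\mathcal D(\gamma)$ gives $\mathcal D\bigl(\alpha_0^{kp-1}\text{ for }\sigma^N(\textit{\textbf{f}}\,)\bigr)\leq k\,\mathcal D\bigl((\alpha')_0^{p-1}\text{ for }f\bigr)$, hence the per-cover limit for the tail is at most $\text{mdim}(X,f)+\varepsilon$. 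Your ``$p$ before $N$'' remark is exactly the right quantifier order, and it does defuse the error-propagation obstacle you flag. The one point you must make explicit in a write-up is the supremum over $\alpha$: your $N$ depends on $\alpha$, so to bound $\text{mdim}(X,\sigma^k(\textit{\textbf{f}}\,))$ at a \emph{fixed} $k$ you need the per-cover monotonicity in the shift that appears inside the proof of Proposition \ref{pro:shift} (not just its statement); with that observation the bound holds for every $k$, and both conclusions follow as you say. In exchange for this extra bookkeeping your argument is self-contained, quantitative, and avoids both the auxiliary space and the imported non-wandering-set result, whereas the paper's proof is shorter and more conceptual but rests on \cite{GTM} and on the continuity of the auxiliary map $F$.
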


\begin{proof}
  Let $(x_n)_{n\in\mathbb N}$ be a sequence of mutually different point converging to
  a point $x_0$. Define the map $F:\{x_n:n=0,1,\dots\}\times X\to\{x_n:n=0,1,\dots\}\times X$
  by $F:(x,y)\mapsto(\phi(x),\psi(x,y))$, where
\begin{align*}
  \phi(x_n)=\left\{
         \begin{array}{ll}
           x_0, & \hbox{ if }n=0 \\
           x_{n+1}, & \hbox{ if }n>0
         \end{array}\;\; \text{ and }\;\;
\psi(x_n,y)=\left\{
  \begin{array}{ll}
    f(y), & \hbox{ if }n=0 \\
    f_n(y), & \hbox{ if }n>0.
  \end{array}
\right.
       \right.
\end{align*}
Note that the non wandering set of $F$, $\Omega(F)$, is a subset  of the fix fiber $x_0\times X$. Since 
$$\text{mdim}(\{x_n:n=0,1,\dots\}\times X,F)=\text{mdim}(\Omega(F) ,F)$$ (by \cite[Lemma 7.2]{GTM}), we have that
\[
\text{mdim}(\{x_n:n=0,1,\dots\}\times X,F)= \text{mdim}(\{x_0\}\times X,F).
\]
 Therefore, $$\text{mdim}(\{x_m:m\geq k\}\times X,F)\leq \text{mdim}(\{x_0\}\times X,F)=\text{mdim}(\{x_n:n=0,1,\dots\}\times X,F), $$ for all $k>0$ (see Remark \ref{tete}, item (3)).
Next, note that by the definition of   $F$
 we have that $$\text{mdim}(\{x_m:m\geq k\}\times X,F)=\text{mdim}(X,\sigma^k(\textit{\textbf{f}}\,)),\quad \text{ for }k>0,$$ and
$\text{mdim}(\{x_0\}\times X,F)=\text{mdim}(X,f)$. Hence, $\text{mdim}( X,\sigma^k(\textit{\textbf{f}}\,))
\leq \text{mdim}(X,f)$, for all $k$.
\end{proof}

Next example proves that the inequality above can be strict. 
 
 \begin{example}\label{egre}
Let $\phi: I^{\mathbb{N}}\rightarrow I^{\mathbb{N}}$ be a continuous map with positive  mean dimension. For each $n\geq 1$, set $f_{n}:I^{\mathbb{N}}\times I^{\mathbb{N}}\rightarrow I^{\mathbb{N}}\times I^{\mathbb{N}}$ defined by $$f_{n}((x_{i})_{i\in\mathbb{N}},(y_{i})_{i\in\mathbb{N}})=((\lambda_{n} x_{i})_{i\in\mathbb{N}},(x_{i}(\phi(y))_{i})_{i\in\mathbb{N}} ),$$  where $\lambda _{n}\rightarrow 1$ and $\lambda_{n} \cdots \lambda_{1}\rightarrow 0$ as $n\rightarrow \infty$.
Note that $f_{n}$ converges uniformly on $I^{\mathbb{N}}\times I^{\mathbb{N}}$  to $f((x_{i})_{i\in\mathbb{N}},(y_{i})_{i\in\mathbb{N}})=((x_{i})_{i\in\mathbb{N}},(x_{i}(\phi(y))_{i})_{i\in\mathbb{N}})$ as $n\rightarrow \infty$ and \[ \text{mdim}(I^{\mathbb{N}}\times I^{\mathbb{N}},f)\geq \text{mdim}(\{(\dots,1,1,1,\dots)\}\times I^{\mathbb{N}},f)=\text{mdim}(I^{\mathbb{N}},\phi)>0. \]
On the other hand, note that $f_{k}^{n}(\bar{x},\bar{y})\rightarrow (\bar{0},\bar{0})$ as $n\rightarrow \infty$ for any $(\bar{x},\bar{y})\in I^{\mathbb{N}}\times I^{\mathbb{N}}$ and $k\geq 1$.  Hence $\text{mdim}(I^{\mathbb{N}}\times I^{\mathbb{N}},\sigma^{k}( \textit{\textbf{f}}\,))=0$
 for any $k\geq 1$, where $\textit{\textbf{f}}=(f_{n})_{n=1}^{\infty}$ and therefore  $\text{mdim}(I^{\mathbb{N}}\times I^{\mathbb{N}}, \textit{\textbf{f}}\,)^{\ast}=0$. \end{example}

\section{Metric mean dimension for non-autonomous dynamical systems}\label{section3}
Throughout this  section,   we will fix $\textit{\textbf{f}}=(f_n)_{n=1}^{\infty}\in  \mathcal{C}(X)$ where $X$ is a compact metric space with metric $d$.   For any  $n\in\mathbb{N}$ let   $d_n:X\times X\to [0,\infty)$ defined by
$$
d_n(x,y)=\max\{d(x,y),d(f_1(x),f_1(y)),\dots,d(f_1^{(n-1)}(x),f_1^{(n-1)}(y))\}.
$$ 
Thus $d_n$ is a metric on $X$ for all $n$ and generates the same topology induced by  $d$. Fix $\varepsilon>0$. We say that $A\subset X$ is an $(n,\textit{\textbf{f}},\varepsilon)$-\textit{separated} set
if $d_n(x,y)>\varepsilon$, for any two  distinct points  $x,y\in A$. We denote by $\text{sep}(n,\textit{\textbf{f}},\varepsilon)$ the maximal cardinality of an $(n,\textit{\textbf{f}},\varepsilon)$-separated
subset of $X$.  {Given an open cover $\alpha$ of $X$, we say that $\alpha$ is an 
$(n,\textit{\textbf{f}},\varepsilon)$-\textit{cover} if the $d_n$-diameter of any element of $\alpha$ is less than $\varepsilon$.} Let $\text{cov}(n,\textit{\textbf{f}},\varepsilon)$  be the minimum number of elements in an  $(n,\textit{\textbf{f}},\varepsilon)$-cover of $X$. We say that $E\subset X$ is an $(n,\textit{\textbf{f}},\varepsilon)$-\textit{spanning} set for $X$ if 
for any $x\in X$ there exists $y\in E$ such  that $d_n(x,y)<\varepsilon$. Let $\text{span}(n,\textit{\textbf{f}},\varepsilon)$ be the minimum cardinality
of any $(n,\textit{\textbf{f}},\varepsilon)$-spanning subset of $X$.   By the compactness of $X$, $\text{sep}(n,\textit{\textbf{f}},\varepsilon)$, $\text{span}(n,\textit{\textbf{f}},\varepsilon)$  and $\text{cov}(n,\textit{\textbf{f}},\varepsilon)$ are finite real numbers.

 \begin{definition}
  We define the \emph{lower  metric mean dimension} of $(X,\textit{\textbf{f}},d)$  and the \emph{upper metric mean dimension} of $(X,\textit{\textbf{f}},d)$ by
\begin{align*}
 \underline{\text{mdim}_M}(X,\textit{\textbf{f}},d)=\liminf_{\varepsilon\to0} \frac{\text{sep}(\textit{\textbf{f}},\varepsilon)}{|\log \varepsilon|}\quad \text{ and }\quad\overline{\text{mdim}_M}(X,\textit{\textbf{f}},d)=\limsup_{\varepsilon\to0} \frac{\text{sep}(\textit{\textbf{f}},\varepsilon)}{|\log \varepsilon|},
\end{align*}
respectively, where $\text{sep}(\textit{\textbf{f}},\varepsilon)=\underset{n\to\infty}\limsup \frac{1}{n}\log \text{sep}(n,\textit{\textbf{f}},\varepsilon)$. \end{definition}

It is not difficult to see that
$$
\underline{\text{mdim}_M}(X,\textit{\textbf{f}},d)=\liminf_{\varepsilon\to0} \frac{\text{span}(\textit{\textbf{f}},\varepsilon)}{|\log \varepsilon|}=\liminf_{\varepsilon\to0} \frac{\text{cov}(X,\varepsilon)}{|\log \varepsilon|},
$$
where $ \text{span}(\textit{\textbf{f}},\varepsilon)=\underset{n\to\infty}\limsup\frac{1}{n}\log \text{span}(n,\textit{\textbf{f}},\varepsilon)$ and  $\text{cov}(\textit{\textbf{f}},\varepsilon)=\underset{n\to\infty}\limsup\frac{1}{n}\log \text{cov}(n,\textit{\textbf{f}},\varepsilon).$
This fact holds for the upper metric mean dimension.
We will write $ {\text{mdim}_M}(X,\textit{\textbf{f}},d)$ to refer to both  $ \overline{\text{mdim}_M}(X,\textit{\textbf{f}},d)$  and $ \underline{\text{mdim}_M}(X,\textit{\textbf{f}},d)$. 
\medskip

Topological entropy for non-autonomous dynamical systems is invariant under uniform equi\-conjugacy (see \cite{K-S} and \cite{JeoCTE}). Metric mean dimension for single dynamical systems depends on the metric $d$ on $X$. Consequently, it is not an invariant under conjugacy and therefore it is not an invariant under uniformly equiconjugacy between   non-autonomous dynamical systems. Set $$\mathcal{B}=\{\rho: \rho \text{ is a metric on }X\text{ equivalent to }d \}$$ and take  
\begin{equation}\label{infmean}   {\text{mdim}_M}(X, \textit{\textbf{f}}\,)=\inf_{\rho\in \mathcal{B}}  {\text{mdim}_M}(X,\textit{\textbf{f}},\rho).\end{equation}

For single maps, ${\text{mdim}_M}(X, \phi)$ is an  invariant under topological conjugacy. In Proposition \ref{edee344} we will prove an analogous result for non-autonomous dynamical systems.

\begin{remark}\label{nmvnrit}
It follows from  the definition of the topological entropy for non-autonomous dynamical systems introduced in \cite{K-S} that 
if the topological entropy of the non-autonomous system $(X,\textit{\textbf{f}},d)$ is finite then its metric mean dimension is zero.
\end{remark}
 
 Next, we will present some examples of the  the metric mean dimension for  both  autonomous and non-autonomous dynamical systems. In Section \ref{Section5} we will show more examples. 
 
 \medskip

Take  $\mathbb{K}=\mathbb{N}$ or $ \mathbb{Z}$.  Consider  the metric $\tilde{d}$ on $X^{\mathbb{K}}$ defined by \begin{equation}\label{mnvc}\tilde{d}(\bar{x},\bar{y})= \sum_{i\in\mathbb{K}}\frac{1}{2^{|i|}}d(x_{i},y_{i}) \quad\text{ for }\bar{x}=(x_{i})_{i\in\mathbb{K}}, \bar{y}=(y_{i})_{i\in\mathbb{K}} \in X^{\mathbb{K}}.\end{equation} 
 
  Take $X=[0,1]$, endowed with the metric $d(x,y)=|x-y|$ for $x,y\in X.$ In \cite{lind3}, Example E, is proved that  $    \text{mdim}(X^{\mathbb{Z}}, \sigma, \tilde{d}) = 1.$
Analogously,   we can prove that    $   \text{mdim}(X^{\mathbb{N}}, \sigma, \tilde{d}) = 1:$

\begin{lemma}\label{bcbcbcbc}  Take $X=[0,1]$ endowed with the metric $d(x,y)=|x-y|$ for $x,y\in X.$ Thus  $$   \emph{mdim}(X^{\mathbb{N}}, \sigma, \tilde{d}) = 1.$$ \end{lemma}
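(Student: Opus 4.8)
The plan is to adapt the computation for the two-sided shift from Lindenstrauss--Weiss (\cite{lind3}, Example E) to the one-sided shift $(X^{\mathbb N},\sigma)$ with $X=[0,1]$. Since the general lower bound $\text{mdim}(X^{\mathbb N},\sigma,\tilde d)\le \overline{\text{mdim}_M}(X^{\mathbb N},\sigma,\tilde d)$ combined with Remark \ref{tete}(3) gives $\text{mdim}(X^{\mathbb N},\sigma)\le \dim(X)=1$, it suffices to prove $\underline{\text{mdim}_M}(X^{\mathbb N},\sigma,\tilde d)\ge 1$, i.e. to produce, for small $\varepsilon>0$, a lower bound on $\text{sep}(n,\sigma,\varepsilon)$ (or equivalently $\text{span}$ or $\text{cov}$) that forces $\text{sep}(\sigma,\varepsilon)/|\log\varepsilon|\to 1$.

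First I would unwind the definition of the dynamical metric $d_n$ associated to $\sigma$ and $\tilde d$: for $\bar x,\bar y\in X^{\mathbb N}$ we have $d_n(\bar x,\bar y)=\max_{0\le k\le n-1}\tilde d(\sigma^k\bar x,\sigma^k\bar y)=\max_{0\le k\le n-1}\sum_{i\ge 1}2^{-|i|}d(x_{i+k},y_{i+k})$. The key observation is that two points whose first $n+N$ coordinates differ in a controlled way (for a suitable fixed $N=N(\varepsilon)$ making the tail $\sum_{i>N}2^{-i}$ negligible compared to $\varepsilon$) can be distinguished in the $d_n$ metric. Concretely, I would fix an integer $N$ with $2^{-N}<\varepsilon/10$, then for a grid of spacing $\sim\varepsilon$ in $[0,1]$ (giving roughly $1/\varepsilon$ choices per coordinate) build a separated set by choosing coordinates $x_1,\dots,x_{n+N}$ freely from the grid and leaving the rest $0$; a block-coding argument shows a positive fraction of such sequences (about $(1/\varepsilon)^{c(n+N)}$ of them) are pairwise $(n,\sigma,\varepsilon)$-separated. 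This yields $\tfrac1n\log\text{sep}(n,\sigma,\varepsilon)\ge c\log(1/\varepsilon)-o(1)$ with $c\to 1$ as $\varepsilon\to 0$, hence $\text{sep}(\sigma,\varepsilon)/|\log\varepsilon|\to 1$ and the lower bound follows. Alternatively one may phrase the whole thing in terms of $\text{cov}(n,\sigma,\varepsilon)$ using a volume/covering estimate on the cube $[0,1]^{n}$ with the sup-type metric $d_n$, which is essentially the argument in \cite{lind3}.

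The main obstacle is the bookkeeping of the tail terms: because $\tilde d$ is a weighted sum over all of $\mathbb N$, changing a coordinate far out still contributes a little to every $d_n$-distance, and conversely the metric $d_n$ does not see coordinates $x_i$ for $i\le 0$ (there are none) but mixes coordinates $x_1,\dots,x_{n-1}$ with different weights $2^{-i}$. One must check that, after truncating at level $N$, the residual contribution is $<\varepsilon/2$ uniformly, and that the weights $2^{-1},\dots,2^{-N}$ being bounded below on the relevant block means separation in a single well-chosen coordinate already gives $d_n$-separation by $\gtrsim\varepsilon$. Handling this cleanly — choosing $N$ as a function of $\varepsilon$, ensuring the count $(1/\varepsilon)^{c(n+N)}$ has the ratio $\tfrac{n+N}{n}\to 1$ and that the grid spacing costs only a factor tending to $1$ — is the technical heart; everything else is a routine transcription of the two-sided case, which by Remark \ref{tete}(3)--(4) and the cited Example E is already known.
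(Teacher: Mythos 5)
The lemma, despite the ``$\text{mdim}$'' in its statement, is a computation of the \emph{metric} mean dimension of $(X^{\mathbb N},\sigma,\tilde d)$ (the paper's proof estimates $\text{cov}(n,\sigma,\varepsilon)$ with respect to $\tilde d_n$ and concludes $\text{mdim}_M(X^{\mathbb N},\sigma,\tilde d)=1$), and this is where your reduction breaks down. Remark \ref{tete}(3) bounds the \emph{topological} mean dimension, $\text{mdim}(X^{\mathbb N},\sigma)\le \dim(X)=1$, and the only general inequality linking the two quantities is $\text{mdim}\le \underline{\text{mdim}_M}$ (Theorem \ref{efgetrr}); it points the wrong way for you under either reading of the lemma. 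If the target is $\text{mdim}_M(X^{\mathbb N},\sigma,\tilde d)=1$, then $\text{mdim}\le 1$ gives no upper bound on $\overline{\text{mdim}_M}$, so the inequality $\overline{\text{mdim}_M}(X^{\mathbb N},\sigma,\tilde d)\le 1$ is simply absent from your argument; if the target were the topological mean dimension, then your lower bound $\underline{\text{mdim}_M}\ge 1$ would not descend to $\text{mdim}\ge 1$. So the two halves you propose do not fit together, and your closing remark about ``phrasing the whole thing in terms of $\text{cov}$'' does not supply the missing half, since it again concerns a lower estimate. (Appealing to Proposition \ref{erfdy} is also not an option: its proof rests on the very computation at issue, and the box dimension of $X^{\mathbb N}$ with the metric $\tilde d$ is infinite in any case.)

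What the upper bound actually requires is the explicit product-cover count the paper performs: fix $l\approx \log_2(4/\varepsilon)$ so that the tail of the series defining $\tilde d$ contributes less than $\varepsilon/2$, cover only the first $n+l$ coordinates by intervals of length $\varepsilon/6$, and leave every remaining factor equal to $X$; each such product set has $\tilde d_n$-diameter less than $\varepsilon$, so $\text{cov}(n,\sigma,\varepsilon)\le (2+12/\varepsilon)^{\,n+l}$, hence $\text{cov}(\sigma,\varepsilon)\le \log(2+12/\varepsilon)$ and the ratio with $|\log\varepsilon|$ tends to $1$. The quantitative point -- that one only needs to control $n+O(\log(1/\varepsilon))$ coordinates, so that after dividing by $n$ the cost per $\varepsilon$ is a single factor $\log(C/\varepsilon)$ -- is exactly what your sketch omits. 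Your lower bound, by contrast, is essentially the paper's and is fine, though overcomplicated: putting an $\varepsilon$-grid on the first $n$ coordinates alone already produces a family of $(1+\lfloor 1/\varepsilon\rfloor)^n$ points that is pairwise $\varepsilon$-separated in $\tilde d_n$, because a difference of at least $\varepsilon$ in coordinate $i<n$ is seen with weight $1$ after applying $\sigma^i$ and $\tilde d$ is a sum of nonnegative terms; no block-coding, ``positive fraction'' count, or extra $N(\varepsilon)$ coordinates are needed.
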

\begin{proof}   Fix $\varepsilon>0$ and take $l = \lceil\log(4/\varepsilon)\rceil$, where $\lceil x\rceil =\min\{k\in \mathbb{Z}: x\leq k\}$. Note that $\sum_{n>l} 2^{-n}\leq \varepsilon/2$. Consider the open cover of $X$ given by 
$$I_{k}=\left(\frac{(k-1)\varepsilon}{12},\frac{(k+1)\varepsilon}{12} \right),\quad\text{for }0\leq k\leq \lfloor  12/\varepsilon\rfloor. $$
  Note that $I_{k}$ has length 
$\varepsilon/6$. Let $n\geq 1$. Next,  consider the following open cover  of $X^{\mathbb{N}}$: 
$$I_{k_{1}}\times I_{k_{2}}\times \cdots \times I_{k_{n+l}}\times X\times X\times\cdots,\quad  \text{ where }0\leq k_{1}, k_{2},\dots, k_{n+l}\leq \lfloor  12/\varepsilon\rfloor. $$ 
Each open set has diameter less than $\varepsilon$ 
 with respect to the
distance $\tilde{d}_{n}$ (see \eqref{mnvc}). Therefore  $$\text{cov}(n,\sigma,\varepsilon)\leq (1+ \lfloor  12/\varepsilon\rfloor)^{n+l}\leq(2+12/\varepsilon)^{n+1 +  12/\varepsilon}.$$ 
Hence 
$$\text{cov}(\sigma,\varepsilon)=\lim_{n\rightarrow \infty}\frac{\log\text{cov}(n,\sigma,\varepsilon)}{n}\leq \lim_{n\rightarrow \infty}\frac{(n+1 +  12/\varepsilon)\log (2+12/\varepsilon)}{n}=\log (2+12/\varepsilon).   $$ 
Thus $$ \text{mdim}(X^{\mathbb{N}}, \sigma, \tilde{d}) =\lim_{\varepsilon \rightarrow \infty} \frac{\text{cov}(\sigma,\varepsilon)}{|\log \varepsilon|}\leq 1. $$

On the other hand, any two distinct points in the sets 
$$ \{(x_{i})_{i\in\mathbb{N}}\in X^{\mathbb{N}} : x_{i} \in \{0,\varepsilon, 2\varepsilon, \dots ,\lfloor  1/\varepsilon\rfloor\varepsilon\} \text{ for all }0\leq i<n\} $$ 
have distance $\geq \varepsilon$ with respect to $d_{n}$. It follows that
$$\text{cov}(n,\sigma,\varepsilon)\geq (1+\lfloor  1/\varepsilon\rfloor)^{n}\geq (1/\varepsilon)^{n}.$$ Therefore
$$\text{cov}(\sigma,\varepsilon)\geq \lim_{n\rightarrow \infty}\frac{\log\text{cov}(n,\sigma,\varepsilon)}{n}=|\log \varepsilon| .$$ 
Hence  $\text{mdim}_{M}(X^{\mathbb{N}},\sigma,d)=1.$
\end{proof}

  {Next example proves that there exist dynamical systems on the interval with positive metric mean dimension (see also \cite{VV}). } 

\begin{example}\label{exfagner} Take $g:[0,1]\rightarrow [0,1]$, defined by $x\mapsto |1-|3x-1||$, and  $0=a_{0}<a_{1}< \cdots <a_{n}<\cdots$, where $a_{n}=\sum_{k=1}^{n}6/\pi ^{2}k^{2}$ for $n\geq 1$. For each $n\geq 1$, let 
 $T_{n}: J_{n}:=[a_{n-1},a_{n}] \rightarrow [0,1] $ be the unique increasing affine map from $J_{n}$ (which has length $6/\pi ^{2}n^{2}$)  onto $[0,1]$ and take any strictly increasing sequence of natural numbers $m_{n}$. 
 Consider the continuous map  $\phi:[0,1]\rightarrow [0,1]$ such that, for each $n\geq 1$, $\phi|_{J_{n}}= T_{n}^{-1}\circ g^{m_{n}}\circ T_{n}$.  
 \medskip
 
 Fix $n\geq 1$. Note that $J_{n}$ can be divided into $3^{m_{n}}$ intervals with the same length  $J_{n}(1),\dots,$ $  J_{n}({3^{m_{n}}})$, such that 
\[ \phi(J_{n}({i}))=J_{n} \quad\text{ for each } i\in\{1,\dots ,3^{m_{n}}\}.\] 

Next, $J_{n}({i})$ can be divided into $3^{n}$ intervals with the same length  $J_{n}({i},1),\dots,$ $  J_{n}({i},3^{n})$
such that $$\phi^{2}(J_{n}({i},s))=J_{n}\quad \text{for }i=1,\dots,3^{n}\quad\text{ and } s=1,\dots, 3^{n}.$$
Inductively, we can prove that for all $k\geq 1$ and    $(i_{1},\dots,i_{k})$, where  $i_{j}\in \{1,\dots, 3^{n}\}$,  we can divide  $J_{n}(i_{1},\dots,i_{k})$   into $3^{n}$ intervals with the same length $J_{n}(i_{1},\dots,i_{k},1),\dots,$ $  J_{n}(i_{1},\dots,i_{k},3^{n})$ such that 
$$\phi^{k+1}(J_{n}(i_{1},\dots,i_{k},i))=J_{n} \quad \text{for }i=1,\dots,3^{n}.$$
Each $J_{n}(i_{1},\dots,i_{k})$ has length $|J_{n}|/3^{kn}$ for each $k\geq 1$. Furthermore, each $J_{n}(i_{1},\dots,i_{k})$ has length $|J_{n}|/3^{km_{n}}$ for each $k\geq 1$.

\medskip

 Take $\varepsilon_n= |J_{n}|/3^{m_n}= 3/\pi ^{2}n^{2}3^{m_{n}}$ for each $n\geq 1$. If $x\in J_{n}(i_{1},\dots,i_{k})$ and $y\in J_{n}(j_{1},\dots,j_{k})$ where $(i_{1},\dots,i_{k})\neq (j_{1},\dots,j_{k})$ and each $i_{1},\dots,i_{k},j_{1},\dots,j_{k}$ is odd, then
 \[ d_{n+k}(x,y)\geq \varepsilon_{n}.\] 
 For each $k\geq1,$ there are more than $(3^{m_{n}}/2)^{k}$ intervals $J_{n}(i_{1},\dots,i_{k})$ with $i_{s}$ odd, $s=1,\dots,k$. 
 Hence    $\text{sep}(n+k,\phi, \varepsilon_{n})\geq (3^{m_{n}}/2)^{k}$ and then
$$
\text{sep}(\phi,\varepsilon_{n})\geq \lim_{k\to\infty}\frac{\log \text{sep}(n+k,\phi,\varepsilon_{n})}{k} \geq \log (3^{m_{n}}/2).$$ Therefore 
\begin{align*}
\overline{\text{mdim}_M}([0,1],\phi,| \cdot |)&\geq \lim_{n\to\infty}\frac{\log (3^{m_{n}}/2)}{-\log \varepsilon_{n}}= \lim_{n\to\infty}\frac{\log (3^{m_{n}}/2)}{-\log(3/\pi ^{2}n^{2}3^{m_{n}})}\\
&= \lim_{n\to\infty}\frac{\log (3^{m_{n}})+\log2}{\log(\pi ^{2}n^{2}/3)+\log (3^{m_{n}})}=1,\end{align*}
hence $ \overline{\text{mdim}_M}([0,1],\phi,|\cdot |)\geq1.$  We will obtain  from Proposition \ref{erfdy}  that $ \overline{\text{mdim}_M}([0,1],\phi,|\cdot |)\leq 1$. Therefore $  \overline{\text{mdim}_M}([0,1],\phi,|\cdot |)= 1.
$  \end{example}

Since $\phi(0)=0$ and $\phi(1)=1$, the map $\phi$ induces a continuous map on $\mathbb{S}^{1}$ with metric mean dimension equal to 1. More generally, we have:
  
 \begin{proposition}\label{jsffeke}  {Take $X=[0,1]$ or $\mathbb{S}^{1}.$ For each $a\in [0,1]$, there exists $\phi_{a}\in C^{0}(X)$ with $\emph{mdim}_{M}(\phi_{a})=a$.}\end{proposition}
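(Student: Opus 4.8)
The plan is to interpolate between a zero-entropy map (giving metric mean dimension $0$) and the map $\phi$ constructed in Example \ref{exfagner} (giving metric mean dimension $1$), by building, for each $a\in[0,1]$, a map that behaves like a rescaled copy of $\phi$ on a subinterval whose contribution to the $|\log\varepsilon|$-normalized separated-set growth is exactly $a$. Concretely, I would split $X=[0,1]$ into two pieces: on $[0,1-a]$ put a map with zero topological entropy (for instance a homeomorphism, or the identity), and on $[1-a,1]$ put an affinely rescaled copy of the map $\phi$ from Example \ref{exfagner}; glue at the common endpoint $1-a$, which is fixed. Since both pieces fix the gluing point, the result $\phi_a$ is a well-defined continuous self-map of $[0,1]$, and since $\phi_a(0)=0$, $\phi_a(1)=1$ it descends to a continuous self-map of $\mathbb S^1$. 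The case $a=0$ is then trivial (Remark \ref{nmvnrit}), and for $a=1$ one takes $\phi_a=\phi$.

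For the lower bound $\mathrm{mdim}_M(\phi_a)\ge a$, I would run the exact computation of Example \ref{exfagner} inside the rescaled copy of $\phi$ sitting on $[1-a,1]$. Rescaling the intervals $J_n$ by the factor $a$ replaces $\varepsilon_n=|J_n|/3^{m_n}$ by $a\varepsilon_n$ and multiplies all the relevant interval lengths by $a$, while the separated-set cardinalities $\mathrm{sep}(n+k,\phi_a,a\varepsilon_n)\ge (3^{m_n}/2)^k$ are unchanged (the combinatorics of which intervals are $\varepsilon$-separated only involves ratios of lengths). Hence $\mathrm{sep}(\phi_a,a\varepsilon_n)\ge\log(3^{m_n}/2)$, and
\begin{align*}
\overline{\mathrm{mdim}_M}([0,1],\phi_a,|\cdot|)
&\ge\lim_{n\to\infty}\frac{\log(3^{m_n}/2)}{-\log(a\varepsilon_n)}
=\lim_{n\to\infty}\frac{\log(3^{m_n})+\log 2}{\log(\pi^2 n^2/3a)+\log(3^{m_n})}=1,
\end{align*}
wait — that gives $1$, not $a$, which is wrong; the point is that the naive rescaling does \emph{not} work, and this is exactly where the main obstacle lies.

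The correct mechanism, and the hard part of the proof, is to kill a definite fraction $1-a$ of the per-step separation growth rather than a fraction of the phase space. I would instead modify the construction of Example \ref{exfagner} itself: keep the partition of each $J_n$ into subintervals, but arrange the exponents $m_n$ together with an auxiliary sequence so that the exponential growth rate of $\mathrm{sep}(n+k,\phi_a,\varepsilon_n)$ in $k$ is $\sim a\log(3^{m_n})$ while the scale $\varepsilon_n$ still satisfies $-\log\varepsilon_n\sim\log(3^{m_n})$; the ratio then tends to $a$. A clean way to realize this is to take a direct-product / skew-product construction: let $\phi_a$ act on $[0,1]$ but on each $J_n$ use $T_n^{-1}\circ g^{\lceil a m_n\rceil}\circ T_n$ composed with a contraction so that the "active" part of $J_n$ — where the surjectivity $\phi^{k+1}(J_n(i_1,\dots,i_k,i))=J_n$ holds — occupies only scale $3^{-(1-a)m_n}$, so that after $k$ steps one has $(3^{a m_n}/2)^k$ separated intervals at scale $\varepsilon_n\asymp 3^{-m_n}$. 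Then
\[
\overline{\mathrm{mdim}_M}([0,1],\phi_a,|\cdot|)\ge\lim_{n\to\infty}\frac{\log(3^{a m_n}/2)}{\log(\pi^2n^2/3)+\log(3^{m_n})}=a,
\]
and the matching upper bound $\mathrm{mdim}_M(\phi_a)\le a$ follows from a direct covering estimate as in Lemma \ref{bcbcbcbc} / Proposition \ref{erfdy}, since $\phi_a$ has Lipschitz-type expansion only by a factor comparable to $3^{am_n}$ at scale $3^{-m_n}$. I expect the main obstacle to be bookkeeping the two competing scales ($3^{-m_n}$ for the resolution $\varepsilon_n$ versus $3^{-a m_n}$ or $3^{am_n}$ for the dynamical expansion) consistently across all $n$, and checking that the $\limsup$ over $n$ in the definition of $\mathrm{mdim}_M$ indeed equals $a$ and not some larger value coming from intermediate scales $\varepsilon$ between $\varepsilon_{n+1}$ and $\varepsilon_n$; this last point is handled exactly as the monotonicity argument sketched after Lemma \ref{bcbcbcbc}.
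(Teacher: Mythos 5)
Your diagnosis of why the naive rescaling onto a subinterval must fail is correct, and your lower bound is plausible and is essentially the computation of Example \ref{exfagner}: with $3^{am_n}$ branch domains of width exactly $\varepsilon_n=|J_n|3^{-m_n}$ tiling an ``active'' region of $J_n$, the odd-index separation argument gives growth rate at least $\log(3^{am_n}/2)$ at scale $\varepsilon_n$, hence $\underline{\text{mdim}_M}\geq a$ along your construction. The genuine gap is the matching upper bound, which you settle in one sentence with two incorrect justifications: Proposition \ref{erfdy} only yields the trivial bound $1$, and your map does not have ``Lipschitz-type expansion only by a factor comparable to $3^{am_n}$'' --- each branch stretches an interval of length $|J_n|3^{-m_n}$ onto all of $J_n$, so the expansion is $3^{m_n}$; only the lap number is $3^{am_n}$. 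This is fatal because you keep the lengths $|J_i|$ only polynomially small: at the scale $\varepsilon_n$ every later block $J_i$ with $i>n$ is still macroscopically visible (diameter much larger than $\varepsilon_n$), and inside it nearby orbits separate at the expansion rate rather than the lap rate. Taking $i$ with $am_i$ of order $|\log\varepsilon_n|$ one can manufacture $(k,\varepsilon_n)$-separated sets in $J_i$ whose exponential growth rate in $k$ is comparable to $|\log\varepsilon_n|$ itself (how large exactly depends on what your map does off the active region, which you never specify), so $\limsup_{\varepsilon\to 0}\text{cov}(\textit{\textbf{f}},\varepsilon)/|\log\varepsilon|$ is pushed well above $a$. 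The ``intermediate scales'' worry you raise at the end is precisely this phenomenon, and there is no monotonicity argument after Lemma \ref{bcbcbcbc} in the paper that disposes of it.

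The paper's proof is built exactly to avoid this interference, and by a different mechanism: it does not thin out the branches at all, but makes the blocks shrink exponentially, $|J_n|=C3^{-rn}$ with $r=1/a$, and puts the full $3^{n}$-branch map $T_n^{-1}\circ g^{n}\circ T_n$ on $J_n$. The test scale is $\varepsilon_n=|J_n|$: then every $J_i$ with $i>n$ has diameter below $\varepsilon_n$ and contributes essentially one cover element, each $J_i$ with $i<n$ contributes at most $(3^{n})^{k+1}\lceil|J_i|/|J_n|\rceil$ elements, and $J_n$ itself contributes about $(3^{n})^{k}$, so the growth rate at scale $\varepsilon_n$ is pinned at $n\log 3$ against $|\log\varepsilon_n|\approx rn\log 3$; the upper bound is the explicit count \eqref{bdhekn3rf}--\eqref{bdheknderf2}, not an appeal to a general principle. (Even in the paper the companion lower bound at the coarse scale $\varepsilon_n=|J_n|$ is delicate, since a set of $d_{k+1}$-diameter less than $\varepsilon_n$ may meet many cylinders; this is not a step one can wave at.) To repair your construction you would have to make the $|J_n|$ decay fast enough that all finer blocks become invisible at scale $\varepsilon_n$ --- at which point you are reproducing the paper's argument rather than offering an alternative to it.
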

 
\begin{proof} Any constant map has metric mean dimension equal to 0. On the other hand, Example \ref{exfagner} proves that there exist continuous maps on $X$ with metric mean dimension equal to 1.  Fix $a\in (0,1)$ and take  $r=\frac{1}{a}$.  Set $a_{0}=0$ and $a_{n}= \sum_{i=1}^{n}C(3^{-ir})$ for $n\geq 1$, where $C=1/ \sum_{i=1}^{\infty}3^{-ri}=1/(3^{r}-1)$. For each $n\geq 1$, take $J_{n}$,   
 $T_{n}$ and $g$ as in Example \ref{exfagner}.  
 Consider the continuous map  $\phi_{a}:[0,1]\rightarrow [0,1]$ such that, for each $n\geq 1$, $\phi_{a}|_{J_{n}}= T_{n}^{-1}\circ g^{n}\circ T_{n}$ (note that $\phi_{a}(0)=0$ and $\phi_{a}(1)=1$, consequently $\phi_{a}$ induces a continuous map on $\mathbb{S}^{1}$).    Fix $n\geq 1$. Each $J_{n}$ can be divided into $3^{n}$ intervals with the same length  $J_{n}(1),\dots,$ $  J_{n}({3^{n}})$, such that 
\[ \phi_{a}(J_{n}({i}))=J_{n} \quad\text{ for each } i\in\{1,\dots ,3^{n}\}.\] 
Next, $J_{n}({i})$ can be divided into $3^{n}$ intervals with the same length  $J_{n}({i},1),\dots,$ $  J_{n}({i},3^{n})$
such that $$\phi_{a}^{2}(J_{n}({i},s))=J_{n}\quad \text{for }i=1,\dots,3^{n}\quad\text{ and } s=1,\dots, 3^{n}.$$
Inductively, we can prove that for all $k\geq 1$ and    $(i_{1},\dots,i_{k})$, where  $i_{j}\in \{1,\dots, 3^{n}\}$,  we can divide  $J_{n}(i_{1},\dots,i_{k})$   into $3^{n}$ intervals with the same length $J_{n}(i_{1},\dots,i_{k},1),\dots,$ $  J_{n}(i_{1},\dots,i_{k},3^{n})$ such that 
$$\phi_{a}^{k+1}(J_{n}(i_{1},\dots,i_{k},i))=J_{n} \quad \text{for }i=1,\dots,3^{n}.$$
Each $J_{n}(i_{1},\dots,i_{k})$ has length $|J_{n}|/3^{kn}$ for each $k\geq 1$.

\medskip

Take $\varepsilon_n= |J_{n}|= C/3^{rn}$ for each $n\geq 1$. Each $  J_{n}(i_{1},\dots,i_{k})$ has $d_{k+1}$-diameter equal to  $\varepsilon_n$. Consequently,   $\text{cov}(k+1,\phi_{a}, \varepsilon_{n})\geq 3^{{n}k}$ and then $$ \text{cov}(\phi_{a},\varepsilon_{n})\geq \lim_{k\to\infty}\frac{\log \text{cov}(k+1,\phi_{a},\varepsilon_{n})}{k+1} \geq \log 3^{{n}}.$$     Therefore \begin{align*} {\text{mdim}_M}([0,1],\phi_{a},| \cdot |)&\geq \lim_{n\to\infty}\frac{\log 3^{n}}{-\log \varepsilon_{n}}= \lim_{n\to\infty}\frac{\log 3^{n}}{-\log(C/3^{nr})} = \lim_{n\to\infty}\frac{\log 3^{n}}{\log 3^{nr}}\\&=\lim_{n\to\infty}\frac{n\log 3}{nr\log3} = \frac{1}{r}=a.\end{align*}

On the other hand, fix $n\geq 1 $.  Let $m\geq n$ be such that $\sum_{i=m}^{\infty}C(3^{-ir})< \varepsilon_{n}$. Therefore \begin{equation}\label{bdhekn3rf}\text{cov}(\cup_{i=m}^{\infty}J_{i},k,\phi_{a}, \varepsilon_{n})=1\quad \text{ for any }k\geq 1.\end{equation}   Note that for each $k\geq 1$ and    $(i_{1},\dots,i_{k})$, where  $i_{j}\in \{1,\dots, 3^{n}\}$, the subintervals $J_{n}(i_{1},\dots,i_{k})$ have diameter less than $\varepsilon_n$ with the metric $d_{k}$ for any $k\geq 1$.  
Consequently, we have      \begin{equation}\label{bdheknrf}\text{cov}({J}_{n},k,\phi_{a}, \varepsilon_{n})\leq (3^{{n}})^{k} \quad \text{ for any }k\geq 1.\end{equation}
For each $i\in\{1,\dots,n-1\}$, divide each  interval $J_{i}$ into 
 $(3^{n})^{k+1}\lceil {|J_{i}|}/{|J_{n}|}\rceil$ subintervals with the same length, where $\lceil x\rceil =\min\{{j}\in \mathbb{Z}: x\leq {j}\}$. 
Each  subinterval  has $d_{k}$-diameter less than $\varepsilon_{n}$, thus \begin{equation}\label{bdheknrf2}\text{cov}(\cup_{i=1}^{n-1}{J}_{i},k,\phi_{a}, \varepsilon_{n})\leq \sum_{i=1}^{n-1} (3^{{n}})^{k+1}\lceil {|J_{i}|}/{|J_{n}|}\rceil.\end{equation}
For  $i\in\{n+1,\dots,m-1\}$, each $J_{i}$    has $d_{k}$-diameter less than $\varepsilon_{n}$, thus \begin{equation}\label{bdheknderf2}\text{cov}(\cup_{i=n+1}^{m-1}{J}_{i},k,\phi_{a}, \varepsilon_{n})\leq m-n-1\quad\text{ for any }k\geq 1.\end{equation}
By \eqref{bdhekn3rf}-\eqref{bdheknderf2}, we have 
\begin{align*}\text{cov}(\phi_{a}, \varepsilon_{n})&\leq \lim_{k\rightarrow\infty}\frac{\log[1+(3^{{n}})^{k}+\sum_{i=1}^{n-1} (3^{{n}})^{k+1}\lceil {|J_{i}|}/{|J_{n}|}\rceil+ m-n-1]}{k-1}\\
&\leq  \lim_{k\rightarrow\infty}\frac{\log[(\sum_{i=1}^{n-1} \lceil {|J_{i}|}/{|J_{n}|}\rceil+ m-n+1) (3^{{n}})^{k+1}]}{k-1}=\lim_{k\rightarrow\infty}\frac{\log (3^{n})^{k+1}}{k-1}\\
&=\log(3^{{n}}).\end{align*}
Hence  
\begin{align*}
 {\text{mdim}_M}([0,1],\phi_{a},| \cdot |)&\leq \lim_{n\to\infty}\frac{\log (3^{{n}})}{-\log \varepsilon_{n}}=  \lim_{n\to\infty}\frac{\log (3^{{n}})}{\log(3^{{rn}})} =\lim_{n\to\infty}\frac{n\log (3)}{(rn)\log(3)}  =a.\end{align*} 
 Therefore $   {\text{mdim}_M}([0,1],\phi_{a},|\cdot |)= a.$  \end{proof}

\begin{example}\label{lkjhfg}
Let $X=\{0,1\}^{\mathbb N}$ with its usual metric  and consider $\textit{\textbf{f}}=(f_{n})_{n=1}^{\infty}$, where $f_n:\{0,1\}^{\mathbb N}\to\{0,1\}^{\mathbb N}$ is given by $f_n(\omega)=\sigma^{2^n}(\omega)$, for any  $n\in\mathbb{N}$.
Note that $f_1^{(n)}(\omega)=\sigma^{2^{n+1}-2}(\omega)$. We claim that $\text{mdim}_{M}(X,\textit{\textbf{f}},d)=\infty$. Fix $\varepsilon>0$.  Take a  positive integer $k$
so that $2^{-(k+1)}\leq\varepsilon<2^{-k}$.
 Now consider $A\subset \{0,1\}^{\mathbb N}$ a $ (2^{n+1}-2,\varepsilon)$-separated set for the shift map $\sigma$ of maximum cardinality and note that $A$ is
an $(n,\varepsilon)$-separated set for $\textit{\textbf{f}}$.
Therefore,
$  \text{sep}(n,\textit{\textbf{f}},\varepsilon) \geq 2^{2^{n+1}-2+k}$ and then
\begin{align*}
 \frac{\log\text{sep}(n,\textit{\textbf{f}},\varepsilon) }{n\log \varepsilon}& \geq \frac{(2^{n+1}-2+k)\log2}{nk}.
\end{align*}
Hence, by the definition of the upper metric mean dimension, we have
$$
 {\text{mdim}_M}(X,\textit{\textbf{f}},d)=\limsup_{\varepsilon\to0}\limsup_{n\to\infty}\frac{\log\text{sep}(n,\textit{\textbf{f}},\varepsilon)}{n|\log\varepsilon|}=\infty.
$$
\end{example} 

In \cite{Zhu}, Zhu,   Liu,   Xu, and Zhang  showed that if $X$ is a  $k$-dimensional Riemannian manifold and $\textit{\textbf{f}}=(f_{n})_{n=1}^{\infty}$ is 
a sequence of $C^{1}$-maps on $X$   such that $a_{n}= \underset{x\in M}{\sup}\Vert D_{x}f_{n}\Vert<\infty$ for all $n\in \mathbb{N}$, then 
$$ h_{top}(\textit{\textbf{f}}\, ) \leq \max\left\{0,  \limsup_{n\rightarrow \infty}\frac{k}{n}\sum_{i=1}^{n-1}\log a_{i} \right\}. $$
Hence, by Remark \ref{nmvnrit}, we have: 
\begin{proposition} If $\limsup_{n\rightarrow \infty}\frac{k}{n}\sum_{i=1}^{n-1}\log a_{i} <\infty$, we have $ {\emph{mdim}_M}(M,\textit{\textbf{f}},d)=0.$\end{proposition}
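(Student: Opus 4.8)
The plan is short, since the statement is a direct consequence of the entropy estimate of Zhu, Liu, Xu and Zhang quoted just above together with Remark~\ref{nmvnrit}. First I would record that, because $M$ is compact and each $f_{n}$ is $C^{1}$, the numbers $a_{n}=\sup_{x\in M}\Vert D_{x}f_{n}\Vert$ are finite, so the expression
\[
c:=\max\Big\{0,\ \limsup_{n\rightarrow\infty}\tfrac{k}{n}\textstyle\sum_{i=1}^{n-1}\log a_{i}\Big\}
\]
is well defined. The standing hypothesis $\limsup_{n\rightarrow\infty}\frac{k}{n}\sum_{i=1}^{n-1}\log a_{i}<\infty$ is precisely what guarantees that $c$ is a (nonnegative) finite real number rather than $+\infty$; note that this is a genuine restriction, since if the $a_{i}$ grow too fast the Cesàro-type average $\frac{1}{n}\sum_{i=1}^{n-1}\log a_{i}$ can diverge.

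Next I would invoke the cited theorem of \cite{Zhu} to obtain $h_{top}(\textit{\textbf{f}}\,)\leq c<\infty$, that is, the non-autonomous system $(M,\textit{\textbf{f}},d)$ has finite topological entropy. Finally, Remark~\ref{nmvnrit}, which states that a non-autonomous system with finite topological entropy has metric mean dimension zero, gives $\overline{\text{mdim}_M}(M,\textit{\textbf{f}},d)=0$; since $\underline{\text{mdim}_M}(M,\textit{\textbf{f}},d)\leq\overline{\text{mdim}_M}(M,\textit{\textbf{f}},d)$, both versions vanish and hence $\text{mdim}_M(M,\textit{\textbf{f}},d)=0$, as claimed.

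There is essentially no obstacle here: the only point to verify carefully is that taking the maximum with $0$ does not cause trouble, i.e.\ that a finite value of $\limsup_{n\rightarrow\infty}\frac{k}{n}\sum_{i=1}^{n-1}\log a_{i}$ (which may well be negative, for instance when the $a_{i}$ are eventually $<1$) still yields a finite bound $c$ for $h_{top}(\textit{\textbf{f}}\,)$ — and it does, since $\max\{0,t\}<\infty$ whenever $t<\infty$. All the analytic content of the statement is already packaged in the two results it cites.
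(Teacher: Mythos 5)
Your proposal is correct and follows exactly the paper's own (one-line) derivation: the quoted bound of Zhu, Liu, Xu and Zhang gives $h_{top}(\textit{\textbf{f}}\,)<\infty$ under the hypothesis, and Remark \ref{nmvnrit} then yields $\text{mdim}_M(M,\textit{\textbf{f}},d)=0$. Your additional remarks on finiteness of the $a_n$ and on the harmlessness of the $\max\{0,\cdot\}$ are fine but add nothing beyond what the paper implicitly uses.
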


  Any  sequence of homeomorphisms on both the interval or   the circle has zero topological entropy (see \cite{K-S}, Theorem D).   
Therefore, the metric mean dimension of any $\textbf{\textit{f}}$ on both the interval or   the circle   is equal to zero.  In the next example we will see that there exist non-autonomous dynamical systems consisting of diffeomorphisms on a surface with infinite metric mean dimension.

\begin{example}\label{hfkenrkflr} Let $\phi:\mathbb{T}^{2}\rightarrow \mathbb{T}^{2}$ be the diffeomorphism induced by a hyperbolic matrix $A$ with eigenvalue $\lambda>1$, where $\mathbb{T}^{2}$ is the torus endowed with the metric $d$ inherited from the plane.    Consider $\textit{\textbf{f}}=(f_{n})_{n=1}^{\infty}$ where    $f_{n}=\phi^{2^n} $ for each $i\geq 1$.  We have $|\text{Fix}(\phi^{n})|=\lambda^{n}+\lambda^{-n}-2, $ where  $\text{Fix}(\psi)$ is the set consisting of fixed points of a continuous map $\psi$ (see \cite{Katok}, Proposition 1.8.1). Furthermore,  $$\text{sep}(n,  \textit{\textbf{f}},1/4)\geq \text{sep}(2^n,  \phi,1/4)\geq \text{Fix}(\phi^{2^n})=\lambda^{2^n}+\lambda^{-2^n}-2$$
(see \cite{Katok}, 
Chapter 3, Section 2.e). Therefore, 
$$\lim_{n\rightarrow \infty}\frac{\text{sep}(n,  \textit{\textbf{f}},1/4)}{n}\geq \lim_{n\rightarrow \infty}\frac{\log \lambda^{2^n}}{n}=\infty,$$
and hence $\text{mdim}_{M}(\mathbb{T}^{2},\textit{\textbf{f}}, d)=\infty$.
\end{example}

  Suppose the   Hausdorff dimension of $X$ is finite. Let $\textit{\textbf{f}}=(f_{n})_{n=1}^{\infty}$ be a non-autonomous dynamical system where each $f_{n}$ is a $C^{r}$-map on $X$.   We have that if $h_{top}(\textit{\textbf{f}}\, )<\infty$ then $  {\text{mdim}_M}(X,\textit{\textbf{f}},d)=0.$ Therefore, if $\sup_{n\in\mathbb{N}}  L (f_{n})  <\infty$, where $L(f_{n})$ is the Lipschitz constant of $f_{n}$, we have that $h_{top}(\textit{\textbf{f}}\, )  <\infty$ and hence $   {\text{mdim}_M}(X,\textit{\textbf{f}},d)=0.$ Thus if   $\sup_{n\in\mathbb{N}}  L( f_{n})  <\infty$, then $   {\text{mdim}_M}(X,\textit{\textbf{f}},d)=0.$ 
In particular, if $X$ is a compact Riemannian manifold and $\textit{\textbf{f}}=(f_{n})_{n=1}^{\infty}$ is a sequence of differentiable maps  that  $\sup_{n\in\mathbb{N}}\Vert D f_{n}\Vert <\infty$, where $Df_{n}$ is the derivative of $f_{n},$ we have that $h_{top}(\textit{\textbf{f}}\, )  <\infty$ and hence $   {\text{mdim}_M}(X,\textit{\textbf{f}},d)=0.$ 
 
 \section{{Some fundamental properties of the metric mean dimension}}\label{section4}

In this section we show some properties which are well-known for topological entropy and metric mean dimension for  dynamical systems.  
 In the next proposition we will consider $\textit{\textbf{f}}^{\,(p)}$, which was defined in  Definition  \ref{composision}. 
 
 \medskip
 
 It is well-known  that  $h_{top}(\textit{\textbf{f}}^{\, (p)})\leq p\, h_{top}(\textit{\textbf{f}}\, )$ and if the sequence  $(f_{n})_{n=1}^{\infty}$ is equicontinuous, then the equality holds (see \cite{K-S}, Lemma 4.2).  For the case of the metric mean dimension, we always have that  $\text{mdim}_M(X,\textit{\textbf{f}}^{\,(p)},d)\leq p\, \text{mdim}_M(X,\textit{\textbf{f}},d)$.   However we will present an example where the inequality can be strict even for single continuous maps (see Remark \ref{dgrdgrw}). 
 
 \begin{proposition}\label{propo211}
For any $\textit{\textbf{f}}=(f_n)_{n=1}^{\infty}$ and  $p\in \mathbb N$, we have
  $$
  {\emph{mdim}_M}(X,\textit{\textbf{f}}^{\,(p)},d)\leq p\,  {\emph{mdim}_M}(X,\textit{\textbf{f}},d).
  $$
  Consequently (see \eqref{infmean}),  $$
   {\emph{mdim}_M}(X,\textit{\textbf{f}}^{\,(p)})\leq p\, {\emph{mdim}_M}(X,\textit{\textbf{f}}\,).
  $$
\end{proposition}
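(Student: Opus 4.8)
The plan is to compare separated sets for the composed system $\textit{\textbf{f}}^{\,(p)}$ with separated sets for the original system $\textit{\textbf{f}}$, exactly as one does for topological entropy. First I would unwind the definitions: writing $\textit{\textbf{g}}=\textit{\textbf{f}}^{\,(p)}=(g_k)_{k=1}^\infty$ with $g_k = f_{kp}\circ\cdots\circ f_{(k-1)p+1}$, one checks directly that $g_1^{(k)} = f_1^{(kp)}$, so the Bowen metric $d^{\textit{\textbf{g}}}_n$ associated with $\textit{\textbf{g}}$ satisfies
\[
d^{\textit{\textbf{g}}}_n(x,y) = \max_{0\le i\le n-1} d\bigl(f_1^{(ip)}(x), f_1^{(ip)}(y)\bigr) \le \max_{0\le j \le np-1} d\bigl(f_1^{(j)}(x), f_1^{(j)}(y)\bigr) = d^{\textit{\textbf{f}}}_{np}(x,y),
\]
since the index set $\{ip : 0\le i\le n-1\}$ is contained in $\{0,1,\dots,np-1\}$. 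Hence any $(np,\textit{\textbf{f}},\varepsilon)$-separated set is automatically $(n,\textit{\textbf{g}},\varepsilon)$-separated, giving $\text{sep}(n,\textit{\textbf{f}}^{\,(p)},\varepsilon) \ge \text{sep}(np,\textit{\textbf{f}},\varepsilon)$... wait, the inequality on metrics goes the other way: $d^{\textit{\textbf{g}}}_n \le d^{\textit{\textbf{f}}}_{np}$ means that an $(n,\textit{\textbf{g}},\varepsilon)$-separated set is an $(np,\textit{\textbf{f}},\varepsilon)$-separated set, so $\text{sep}(n,\textit{\textbf{f}}^{\,(p)},\varepsilon) \le \text{sep}(np,\textit{\textbf{f}},\varepsilon)$.

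From this single inequality the rest is a limiting argument. Taking $\frac1n\log(\cdot)$ and $\limsup_{n\to\infty}$, I get
\[
\text{sep}(\textit{\textbf{f}}^{\,(p)},\varepsilon) = \limsup_{n\to\infty}\frac1n\log\text{sep}(n,\textit{\textbf{f}}^{\,(p)},\varepsilon) \le \limsup_{n\to\infty}\frac{p}{np}\log\text{sep}(np,\textit{\textbf{f}},\varepsilon) \le p\,\text{sep}(\textit{\textbf{f}},\varepsilon),
\]
the last step because $\{np : n\in\mathbb N\}$ is a subsequence of $\mathbb N$, so its $\limsup$ is at most the full $\limsup$. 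Dividing by $|\log\varepsilon|$ and passing to $\liminf_{\varepsilon\to0}$ (respectively $\limsup_{\varepsilon\to0}$) yields $\text{mdim}_M(X,\textit{\textbf{f}}^{\,(p)},d) \le p\,\text{mdim}_M(X,\textit{\textbf{f}},d)$ for both the lower and upper versions simultaneously. Finally, for the statement with the infimum over equivalent metrics: for every $\rho\in\mathcal B$ the inequality just proved gives $\text{mdim}_M(X,\textit{\textbf{f}}^{\,(p)},\rho) \le p\,\text{mdim}_M(X,\textit{\textbf{f}},\rho)$, and taking the infimum over $\rho\in\mathcal B$ on both sides (using that $\inf$ is monotone and that the same family $\mathcal B$ works for both systems) gives $\text{mdim}_M(X,\textit{\textbf{f}}^{\,(p)}) \le p\,\text{mdim}_M(X,\textit{\textbf{f}}\,)$.

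There is no genuine obstacle here; the only point requiring a little care is getting the direction of the metric comparison right — one must notice that $d^{\textit{\textbf{g}}}_n$ only samples the orbit at multiples of $p$ and is therefore dominated by $d^{\textit{\textbf{f}}}_{np}$, which is the inequality that makes $(n,\textit{\textbf{g}})$-separation weaker than $(np,\textit{\textbf{f}})$-separation. One could equally run the whole argument with spanning sets or with covers ($\text{cov}(n,\textit{\textbf{f}}^{\,(p)},\varepsilon)\ge\text{cov}(np,\textit{\textbf{f}},\varepsilon)$... again one must track the direction), but the separated-set formulation is cleanest since the definition of $\text{mdim}_M$ in the paper is phrased in terms of $\text{sep}$. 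I would present the proof in the three stages above: (i) the identity $g_1^{(k)}=f_1^{(kp)}$ and the metric inequality $d^{\textit{\textbf{f}}^{(p)}}_n\le d^{\textit{\textbf{f}}}_{np}$; (ii) the resulting inequality between separation numbers and the passage through $\limsup$ and $\liminf$/$\limsup$; (iii) taking the infimum over $\mathcal B$ for the last assertion.
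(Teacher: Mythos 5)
Your proof is correct and follows essentially the same route as the paper: both rest on the metric comparison $d^{\textit{\textbf{f}}^{(p)}}_n \le d^{\textit{\textbf{f}}}_{np}$ (i.e.\ the composed system only samples the orbit at multiples of $p$) and then pass through the $\limsup$ in $n$ and the limit in $\varepsilon$. The only cosmetic difference is that the paper phrases the counting step with spanning sets, $\mathrm{span}(m,\textit{\textbf{f}}^{\,(p)},\varepsilon)\le \mathrm{span}(mp,\textit{\textbf{f}},\varepsilon)$, while you use separated sets, which the paper already notes yield the same quantities.
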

\begin{proof}
Note that, for any positive integer $m$, we have
$$\max_{0\leq j<m}d(f_1^{(jp)}(x),f_1^{(jp)}(y))\leq \max_{0\leq j<mp}d(f_1^{(j)}(x),f_1^{(j)}(y)).$$
Thus $\mbox{span}(m,\textit{\textbf{f}}^{\,(p)},\varepsilon)\leq \mbox{span}(mp,\textit{\textbf{f}},\varepsilon)$ and therefore 
$$
\text{span}(\textit{\textbf{f}}^{(p)},\varepsilon)=\limsup_{m\to \infty}\frac{1}{m}\log \mbox{span}(m, \textit{\textbf{f}}^{(p)},\varepsilon)
                                    \leq p\limsup_{m\to \infty}\frac{1}{mp}\log \mbox{span}(m ,\textit{\textbf{f}},\varepsilon)=p\ \text{span}(\textit{\textbf{f}},\varepsilon). $$ 
Hence ${\text{mdim}_M}(X,\textit{\textbf{f}}^{\,(p)},d)\leq p\,{\text{mdim}_M}(X,\textit{\textbf{f}},d).$
\end{proof}

\begin{remark}\label{dgrdgrw}
 In Example \ref{exfagner} we  prove that there exists a continuous map $\phi:[0,1]\rightarrow [0,1]$ such that  $ \underline{\text{mdim}_M}([0,1],\phi,d)=1$, where $d(x,y)=|x-y|$ for $x,y\in [0,1]$.  It follows from Proposition  \ref{erfdy} that for any $f:[0,1]\rightarrow [0,1]$  we have 
$\overline{\text{mdim}_{M}}([0,1],f,d)\leq1. $ Consequently, $\overline{\text{mdim}_{M}}([0,1],\phi^{n},d)\leq1$ for any $n\geq 1$, which proves that the inequality  in Proposition \ref{propo211} can be strict for autonomous systems and therefore for non-autonomous   systems.
\end{remark}

   If $A,B\subseteq X $ are invariant subsets  under a continuous map $\phi$, then   $$h_{top}(\phi)=\max \{ h_{top}(\phi|_{A})  ,h_{top}(\phi|_{B})\}.$$ 
It is clear this property is also valid for the metric mean dimension. 
\begin{proposition}\label{invariant}
  If $A,B\subseteq X $ are invariant subsets  under $\phi$, then   $$\emph{mdim}_{M}(X,\phi,d)=\max \{\emph{mdim}_{M}(X,\phi|_{A},d)  ,\emph{mdim}_{M}(X,\phi|_{B},d)\}.$$
\end{proposition}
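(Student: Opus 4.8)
The strategy is the one familiar from the corresponding statement for topological entropy: reduce everything to the finite-scale counting functions and exploit that these are sub- and super-additive with respect to the decomposition $X=A\cup B$ (tacitly assumed here, exactly as in the entropy statement recalled just above the proposition). I would work with $(n,\phi,\varepsilon)$-separated sets, since they behave most transparently under restriction to invariant subsets.

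For the inequality $\max\{\text{mdim}_M(X,\phi|_A,d),\text{mdim}_M(X,\phi|_B,d)\}\le \text{mdim}_M(X,\phi,d)$, I would simply observe that, because $A$ is $\phi$-invariant, the metrics $d_n$ built from $\phi$ restrict on $A\times A$ to the metrics built from $\phi|_A$; hence any $(n,\phi|_A,\varepsilon)$-separated subset of $A$ is an $(n,\phi,\varepsilon)$-separated subset of $X$, so $\text{sep}(n,\phi|_A,\varepsilon)\le\text{sep}(n,\phi,\varepsilon)$, and likewise for $B$. Applying $\tfrac1n\log(\cdot)$, then $\limsup_n$, then dividing by $|\log\varepsilon|$ and letting $\varepsilon\to0$ preserves the inequality (in the $\liminf$ as well as the $\limsup$ version).

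For the reverse inequality, let $E\subset X$ be a maximal $(n,\phi,\varepsilon)$-separated set. Since $X=A\cup B$ we may write $E=(E\cap A)\cup(E\cap B)$, and $E\cap A$ (resp. $E\cap B$), being $(n,\phi,\varepsilon)$-separated and contained in the invariant set $A$ (resp. $B$), is $(n,\phi|_A,\varepsilon)$-separated (resp. $(n,\phi|_B,\varepsilon)$-separated). Hence
\[
\text{sep}(n,\phi,\varepsilon)\le\text{sep}(n,\phi|_A,\varepsilon)+\text{sep}(n,\phi|_B,\varepsilon)\le 2\max\{\text{sep}(n,\phi|_A,\varepsilon),\text{sep}(n,\phi|_B,\varepsilon)\}.
\]
Applying $\tfrac1n\log(\cdot)$, the constant $2$ contributes $\tfrac{\log 2}{n}\to0$; using $\tfrac1n\log\max\{a,b\}=\max\{\tfrac1n\log a,\tfrac1n\log b\}$ together with the fact that $\limsup_n$ commutes with a binary maximum, one obtains $\text{sep}(\phi,\varepsilon)\le\max\{\text{sep}(\phi|_A,\varepsilon),\text{sep}(\phi|_B,\varepsilon)\}$, which combined with the previous step is an equality. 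Dividing by $|\log\varepsilon|$ and passing to $\limsup_{\varepsilon\to0}$ — which again commutes with the binary maximum — yields the identity for $\overline{\text{mdim}_M}$, and the same chain of inequalities gives it for $\underline{\text{mdim}_M}$.

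The only point that needs genuine care is the commutation of the finite maximum with the two successive limiting operations ($n\to\infty$ and then $\varepsilon\to0$): the counting-level estimates are immediate, and essentially the whole content is checking that no growth is lost when these limits are taken, together with the routine verification that $d_n|_{A\times A}$ and $d_n|_{B\times B}$ are exactly the metrics associated with $\phi|_A$ and $\phi|_B$, which is where forward-invariance of $A$ and $B$ enters.
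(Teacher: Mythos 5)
Your counting estimates are fine, and you were right to read in the hypothesis $X=A\cup B$ (the paper leaves it implicit, and without it the equality is false). For the upper quantity the argument is complete: $\text{sep}(n,\phi|_A,\varepsilon)\le\text{sep}(n,\phi,\varepsilon)$ and $\text{sep}(n,\phi,\varepsilon)\le\text{sep}(n,\phi|_A,\varepsilon)+\text{sep}(n,\phi|_B,\varepsilon)$ give, after applying $\tfrac1n\log$ and $\limsup_{n\to\infty}$ (which does commute with a binary maximum), the identity $\text{sep}(\phi,\varepsilon)=\max\{\text{sep}(\phi|_A,\varepsilon),\text{sep}(\phi|_B,\varepsilon)\}$, and $\limsup_{\varepsilon\to0}$ again commutes with the maximum, so the statement for $\overline{\text{mdim}_M}$ is proved. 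Since the paper states this proposition without any proof (it is asserted as clear after the entropy analogue), there is no argument of the paper to compare with; your route is the natural one.

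The gap is the closing claim that ``the same chain of inequalities gives it for $\underline{\text{mdim}_M}$''. In the paper's definitions the inner limit is a $\limsup$ in $n$ for both quantities; the lower one differs only in the outer $\liminf_{\varepsilon\to0}$, and $\liminf$ does \emph{not} commute with a binary maximum: in general one only has $\liminf_{\varepsilon\to0}\max\{u(\varepsilon),v(\varepsilon)\}\ge\max\{\liminf_{\varepsilon\to0}u(\varepsilon),\liminf_{\varepsilon\to0}v(\varepsilon)\}$, which is the direction you do not need. Hence from $\text{sep}(\phi,\varepsilon)=\max\{\text{sep}(\phi|_A,\varepsilon),\text{sep}(\phi|_B,\varepsilon)\}$ you only recover $\underline{\text{mdim}_M}(X,\phi,d)\ge\max\{\underline{\text{mdim}_M}(A,\phi|_A,d),\underline{\text{mdim}_M}(B,\phi|_B,d)\}$, which you already had from monotonicity; the reverse inequality, the nontrivial half for the lower version, is not established. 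This is not cosmetic: if $A$ and $B$ carry their separation numbers at interlaced ranges of scales (the constructions in Example \ref{exfagner} and Proposition \ref{jsffeke} show how complexity can be concentrated near a prescribed sequence of scales $\varepsilon_n$), then $\max\{\text{sep}(\phi|_A,\varepsilon),\text{sep}(\phi|_B,\varepsilon)\}/|\log\varepsilon|$ may well have a strictly larger $\liminf$ than either term, so the step cannot be repaired by a purely formal commutation; the lower-dimension case needs a genuinely different argument, or the conclusion should be restricted to $\overline{\text{mdim}_M}$.
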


If $A_{1}$, $A_{2}$, $\dots$ is a sequence of invariant subsets under $\phi$, then $$\max_{n\in\mathbb{N}} \{\text{mdim}_{M}(X,\phi|_{A_{n}},d) \}\leq \text{mdim}_{M}(X,\phi,d).$$ Example \ref{exfagner} proves that the inequality can be strict (the sets $J_{1}$, $J_{2}$,  $ \dots$ are invariant under $\phi$, however   $\text{mdim}_{M}(X,\phi|_{J_{n}},d)=0$ for each $n$).

 \medskip
  
  Metric mean dimension can be defined on any subset $A$ of $X$.   Kolyada  and  Snoha in \cite{K-S}, Lemma 4.1, proved that if $X= \cup_{i}^{n} A_{i}$, then  $$ h_{top}(\textit{\textbf{f}}\,)= \max_{i=1,\dots,n}h_{top}(\textit{\textbf{f}}\,|_{A_{i}}).$$
  Analogously we can prove that: 
  \begin{proposition}
  If $X= \cup_{i}^{n} A_{i}$, then  $$ \emph{mdim}_{M}(X, \textit{\textbf{f}}\, , d)= \max_{i=1,\dots,n}\emph{mdim}_{M}(X,\textit{\textbf{f}}\,|_{A_{i}},d).$$
  \end{proposition}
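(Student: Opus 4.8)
The plan is to mimic the proof of Kolyada–Snoha's Lemma 4.1 cited just above, working at the level of separated sets. The inequality $\mathrm{mdim}_M(X,\textit{\textbf{f}}\,|_{A_i},d)\le \mathrm{mdim}_M(X,\textit{\textbf{f}},d)$ for each $i$ is immediate, since an $(n,\textit{\textbf{f}},\varepsilon)$-separated subset of $A_i$ is in particular an $(n,\textit{\textbf{f}},\varepsilon)$-separated subset of $X$, so $\mathrm{sep}(n,\textit{\textbf{f}}\,|_{A_i},\varepsilon)\le \mathrm{sep}(n,\textit{\textbf{f}},\varepsilon)$; taking $\tfrac1n\log(\cdot)$, then $\limsup_{n\to\infty}$, then dividing by $|\log\varepsilon|$ and passing to the limit (or $\liminf$/$\limsup$) in $\varepsilon$ gives $\max_i \mathrm{mdim}_M(X,\textit{\textbf{f}}\,|_{A_i},d)\le \mathrm{mdim}_M(X,\textit{\textbf{f}},d)$.

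For the reverse inequality, fix $\varepsilon>0$ and let $E\subset X$ be an $(n,\textit{\textbf{f}},\varepsilon)$-separated set of maximal cardinality $\mathrm{sep}(n,\textit{\textbf{f}},\varepsilon)$. Since $X=\bigcup_{i=1}^n A_i$ (I will write $X=\bigcup_{i=1}^m A_i$ to avoid clashing with the index $n$ used for the iteration depth), by pigeonhole some $A_{i_0}$ contains a subset $E\cap A_{i_0}$ of cardinality at least $\mathrm{sep}(n,\textit{\textbf{f}},\varepsilon)/m$, and this subset is $(n,\textit{\textbf{f}},\varepsilon)$-separated, hence $\mathrm{sep}(n,\textit{\textbf{f}}\,|_{A_{i_0}},\varepsilon)\ge \mathrm{sep}(n,\textit{\textbf{f}},\varepsilon)/m$. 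Thus $\sum_{i=1}^m \mathrm{sep}(n,\textit{\textbf{f}}\,|_{A_i},\varepsilon)\ge \mathrm{sep}(n,\textit{\textbf{f}},\varepsilon)$, so $\max_{i} \mathrm{sep}(n,\textit{\textbf{f}}\,|_{A_i},\varepsilon)\ge \mathrm{sep}(n,\textit{\textbf{f}},\varepsilon)/m$. Taking $\tfrac1n\log$, the constant $\tfrac1n\log m\to 0$, so $\max_i \mathrm{sep}(\textit{\textbf{f}}\,|_{A_i},\varepsilon)\ge \mathrm{sep}(\textit{\textbf{f}},\varepsilon)$; dividing by $|\log\varepsilon|$ and passing to the limit in $\varepsilon$ (for both the $\liminf$ and the $\limsup$ versions) gives $\max_i \mathrm{mdim}_M(X,\textit{\textbf{f}}\,|_{A_i},d)\ge \mathrm{mdim}_M(X,\textit{\textbf{f}},d)$.

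The one subtlety — and the only place requiring a little care — is the interchange of $\max$ over the finite index set $i$ with the $\limsup_{n\to\infty}$ defining $\mathrm{sep}(\textit{\textbf{f}},\varepsilon)$ and with the $\liminf$ or $\limsup$ as $\varepsilon\to 0$. Since the index set $\{1,\dots,m\}$ is finite, $\limsup_n \max_i a_i(n) = \max_i \limsup_n a_i(n)$ and likewise for $\liminf_\varepsilon$/$\limsup_\varepsilon$, so the manipulations go through verbatim; this is exactly the argument underlying the finite-subadditivity/monotonicity properties already invoked for $h_{top}$ in \cite{K-S}. Combining the two inequalities yields the claimed equality. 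I expect no genuine obstacle here; the statement is the straightforward non-autonomous analogue of the classical fact, and the proof is essentially a pigeonhole estimate together with the harmless $\tfrac1n\log m\to 0$ correction.
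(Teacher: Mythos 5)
Your argument is precisely the adaptation of Kolyada--Snoha's Lemma 4.1 that the paper has in mind (the paper omits the proof, saying only that it is analogous), and for the \emph{upper} metric mean dimension it is complete: the pigeonhole bound $\mathrm{sep}(n,\textit{\textbf{f}},\varepsilon)\le m\,\max_i \mathrm{sep}(n,\textit{\textbf{f}}\,|_{A_i},\varepsilon)$ together with $\tfrac1n\log m\to 0$ gives $\mathrm{sep}(\textit{\textbf{f}},\varepsilon)\le\max_i\mathrm{sep}(\textit{\textbf{f}}\,|_{A_i},\varepsilon)$, and the only interchanges you need there, $\limsup_n\max_i=\max_i\limsup_n$ and $\limsup_{\varepsilon\to0}\max_i=\max_i\limsup_{\varepsilon\to0}$, are valid for a finite index family. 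The easy inequality $\max_i\mathrm{mdim}_M(X,\textit{\textbf{f}}\,|_{A_i},d)\le\mathrm{mdim}_M(X,\textit{\textbf{f}},d)$ is also fine for both the upper and lower versions.

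The gap is in the clause ``and likewise for $\liminf_\varepsilon$''. The paper's symbol $\mathrm{mdim}_M$ stands for \emph{both} the upper and the lower metric mean dimension, and a finite maximum does not commute with $\liminf$: setting $b_i(\varepsilon)=\mathrm{sep}(\textit{\textbf{f}}\,|_{A_i},\varepsilon)/|\log\varepsilon|$, one only has $\liminf_{\varepsilon\to0}\max_i b_i(\varepsilon)\ge\max_i\liminf_{\varepsilon\to0}b_i(\varepsilon)$, and the inequality can be strict when the $b_i$ oscillate out of phase as $\varepsilon\to0$. Thus from $\mathrm{sep}(\textit{\textbf{f}},\varepsilon)\le\max_i\mathrm{sep}(\textit{\textbf{f}}\,|_{A_i},\varepsilon)$ you obtain $\underline{\mathrm{mdim}_M}(X,\textit{\textbf{f}},d)\le\liminf_{\varepsilon\to0}\max_i b_i(\varepsilon)$, which does not bound $\max_i\underline{\mathrm{mdim}_M}(X,\textit{\textbf{f}}\,|_{A_i},d)$ from above, so the lower-mdim half of the equality is not established by these manipulations. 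This is exactly where the analogy with topological entropy (your appeal to \cite{K-S}) breaks down: for entropy the $\varepsilon$-limit is monotone, hence a supremum, and suprema do commute with finite maxima; the quotient by $|\log\varepsilon|$ destroys that monotonicity. Your proof therefore establishes the proposition for $\overline{\mathrm{mdim}_M}$, while for $\underline{\mathrm{mdim}_M}$ an additional argument (or a restriction of the statement to the upper version) is required -- a subtlety the paper itself glosses over, since it states the result without proof.
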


\begin{definition}
We say that $x\in X$ is a \textit{nonwandering point} for $\textit{\textbf{f}}$ if for every neighbourhood $U$ of $x$ there exist positive integers $k$ and $n$ with
$f_n^{(k)}(U)\cap U\not=\emptyset$. We denote by $\Omega({\textit{\textbf{f}}}\,)$ the set consisting of the nonwandering points of $\textit{\textbf{f}}$.
\end{definition} 

 It is well-known that for any continuous map $\phi:X\rightarrow X$ we have $h_{top}(\phi)=h_{top}(\phi|_{\Omega(\phi)}).$ This fact  was proved  for  non-autonomous dynamical systems by Kolyada and Snoha in  \cite{K-S}.  For mean dimension of single continuous maps this fact was proved by Gutman in \cite{GTM},  Lemma 7.2.   For the metric mean dimension of non-autonomous dynamical systems we also have:  

\begin{theorem}\label{thm:non-wond}
We have 
  \[
   {\emph{mdim}_M}(X,\textit{\textbf{f}},d)= {\emph{mdim}_M}(\Omega({\textit{\textbf{f}}\,)},\textit{\textbf{f}},d).
  \]
\end{theorem}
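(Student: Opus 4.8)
The plan is to mimic the classical argument (as in Kolyada--Snoha for entropy, or Gutman for mean dimension) showing that all the "complexity" of the system lives over the nonwandering set. Since $\Omega(\textit{\textbf{f}}\,)\subseteq X$ is $\textit{\textbf{f}}$-invariant, one inequality, namely $\text{mdim}_M(\Omega(\textit{\textbf{f}}\,),\textit{\textbf{f}},d)\leq \text{mdim}_M(X,\textit{\textbf{f}},d)$, is immediate from monotonicity of metric mean dimension under passing to invariant subsets (this follows from the same reasoning as in Proposition \ref{invariant} and the displayed remark after it; a separated set in $\Omega(\textit{\textbf{f}}\,)$ is a separated set in $X$). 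So the whole content is the reverse inequality
\[
\text{mdim}_M(X,\textit{\textbf{f}},d)\leq \text{mdim}_M(\Omega(\textit{\textbf{f}}\,),\textit{\textbf{f}},d),
\]
equivalently, for each fixed $\varepsilon>0$, one needs $\text{span}(\textit{\textbf{f}},\varepsilon)\leq \text{span}(\textit{\textbf{f}}|_{\Omega(\textit{\textbf{f}}\,)},\varepsilon')$ up to harmless loss in $\varepsilon$, so that dividing by $|\log\varepsilon|$ and taking the appropriate limit gives the claim.

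The key step is the following counting lemma, which is the non-autonomous analogue of the classical "covering by wandering + nonwandering pieces" argument. Fix $\varepsilon>0$. Call a point $x\in X$ \emph{$(\textit{\textbf{f}},\varepsilon)$-wandering} if there is an open ball $B(x,\delta_x)$ and an index $N_x$ such that $f_n^{(k)}(B(x,\delta_x))\cap B(x,\delta_x)=\emptyset$ for all $n\geq 1$ and all $k\geq N_x$ — one has to be a little careful with the non-autonomous bookkeeping here, because "nonwandering" in Definition above quantifies over \emph{all} $n$ and $k$, so the negation gives, for each $x\notin\Omega(\textit{\textbf{f}}\,)$, a neighbourhood $U_x$ with $f_n^{(k)}(U_x)\cap U_x=\emptyset$ for every $n,k$. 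The set $W=X\setminus\Omega(\textit{\textbf{f}}\,)$ is open; cover its closure's complement... more precisely, for $\eta>0$ let $\Omega_\eta$ be the closed $\eta$-neighbourhood of $\Omega(\textit{\textbf{f}}\,)$ and $K_\eta=X\setminus\Omega_\eta$, a compact set consisting of wandering points. By compactness $K_\eta$ is covered by finitely many of the $U_x$'s; an orbit segment $x,f_1(x),\dots,f_1^{(m-1)}(x)$ can pass through each such $U_x$ at most once (by disjointness of its forward images under the $f_n^{(k)}$), so a point whose entire orbit segment of length $m$ stays in $K_\eta$ simply cannot exist once $m$ exceeds the number of sets in the finite subcover. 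Hence for $m$ large every orbit segment enters $\Omega_\eta$; one then estimates $\text{span}(m,\textit{\textbf{f}},\varepsilon)$ by comparing with spanning sets for $\Omega_\eta$ (using that for small $\eta$ a spanning/separated computation on $\Omega_\eta$ is controlled by one on $\Omega(\textit{\textbf{f}}\,)$ with a slightly worse $\varepsilon$, by uniform continuity of the finitely-many-at-a-time maps $f_1^{(k)}$ restricted... — this last uniformity is exactly where care is needed).

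Concretely, I would run the estimate at the level of $\text{cov}(n,\textit{\textbf{f}},\varepsilon)$: show $\text{cov}(n,\textit{\textbf{f}},\varepsilon)\leq C\cdot \text{cov}(n,\textit{\textbf{f}}|_{\Omega_\eta},\varepsilon)$ for a constant $C$ independent of $n$ (it depends only on the finite subcover of $K_\eta$ and on $\varepsilon,\eta$), take $\tfrac1n\log$ and let $n\to\infty$ to get $\text{cov}(\textit{\textbf{f}},\varepsilon)\leq \text{cov}(\textit{\textbf{f}}|_{\Omega_\eta},\varepsilon)$, then divide by $|\log\varepsilon|$, let $\varepsilon\to0$, and finally let $\eta\to 0$ using that $\bigcap_\eta\Omega_\eta=\Omega(\textit{\textbf{f}}\,)$ together with a standard semicontinuity/outer-regularity argument to pass from $\text{mdim}_M(\Omega_\eta,\textit{\textbf{f}},d)$ to $\text{mdim}_M(\Omega(\textit{\textbf{f}}\,),\textit{\textbf{f}},d)$. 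The main obstacle I anticipate is precisely this last limit $\eta\to 0$: unlike entropy, metric mean dimension is a $\limsup$ (or $\liminf$) over $\varepsilon$ of a quantity that is not obviously continuous in the subset, so one must either argue that $\text{cov}(\textit{\textbf{f}}|_{\Omega_\eta},\varepsilon)\to\text{cov}(\textit{\textbf{f}}|_{\Omega(\textit{\textbf{f}}\,)},c\varepsilon)$ for a universal $c$ (using equicontinuity-free estimates and compactness of $X$), or replace $\Omega_\eta$ from the start by a cleverer neighbourhood adapted to the cover, so that the $\eta\to0$ step is not needed — e.g. absorbing the wandering part directly into the bound with a multiplicative constant and reading off the inequality $\text{mdim}_M(X,\textit{\textbf{f}},d)\leq\text{mdim}_M(\Omega(\textit{\textbf{f}}\,),\textit{\textbf{f}},d)$ in one shot. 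The non-autonomous subtleties (the index $n$ in $f_n^{(k)}$, and the lack of equicontinuity) mean one should phrase the "each orbit segment visits $U_x$ at most once" claim carefully, but it goes through because the disjointness hypothesis on $U_x$ is over all $n$ and $k$ simultaneously.
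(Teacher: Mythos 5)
Your easy inequality is fine, but the route you propose for $\text{mdim}_M(X,\textit{\textbf{f}},d)\leq \text{mdim}_M(\Omega(\textit{\textbf{f}}\,),\textit{\textbf{f}},d)$ has a genuine gap, and it is exactly the one you flag yourself: the passage through the fattened set $\Omega_\eta$ and the final limit $\eta\to 0$. Nothing you cite (and nothing standard) gives $\lim_{\eta\to0}\text{mdim}_M(\Omega_\eta,\textit{\textbf{f}},d)=\text{mdim}_M(\Omega(\textit{\textbf{f}}\,),\textit{\textbf{f}},d)$: the metric mean dimension is an $\varepsilon\to0$ limit of growth rates, and for any fixed $\eta$ the relevant scales $\varepsilon$ eventually become much smaller than $\eta$, so no uniform-continuity or outer-regularity argument closes the gap; there is no semicontinuity of $\text{mdim}_M$ under decreasing intersections of compacta to invoke. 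Moreover the intermediate claim $\text{cov}(n,\textit{\textbf{f}},\varepsilon)\leq C\cdot \text{cov}(n,\textit{\textbf{f}}\,|_{\Omega_\eta},\varepsilon)$ with $C$ independent of $n$ is not established by your argument: knowing that every sufficiently long orbit segment meets $\Omega_\eta$ (or even spends all but boundedly many steps there) does not let you cover $X$ in the $d_n$-metric by data coming from the subsystem on $\Omega_\eta$ --- there is no shadowing and no uniform-in-$n$ modulus of continuity in the non-autonomous setting, and even in the autonomous entropy argument the correction factor is polynomial in $n$, not a constant.

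The paper avoids both problems by never fattening $\Omega(\textit{\textbf{f}}\,)$ and never comparing with the subsystem on a neighbourhood. Fix $\varepsilon$ and $n$, take a minimal $(n,\textit{\textbf{f}},\varepsilon)$-cover $\alpha$ of $X$, extract from it a subcover $\beta$ of $\Omega(\textit{\textbf{f}}\,)$, and cover the compact set $X\setminus\bigcup_{U\in\beta}U$ by finitely many wandering open sets, each contained in an element of $\alpha$; together with $\beta$ this is a cover $\gamma$ finer than $\alpha$. For the blocked sequence $\textit{\textbf{f}}^{\,(n)}$, every nonempty element of the $k$-fold refined cover is labelled by an itinerary $(A_0,\dots,A_{k-1})$ with $A_i\in\gamma$, and your own observation (disjointness over all $n$ and $k$ simultaneously) shows each wandering piece can occur at most once in such an itinerary; counting gives at most $(m+1)\,m!\,k^{m}\,(\mathrm{Card}\,\beta)^{k}$ nonempty elements, where $m$ is the number of wandering pieces. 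Hence $\limsup_k \frac1k\log \text{cov}(k,\textit{\textbf{f}},\varepsilon)\leq \frac1n\log \text{cov}(\Omega(\textit{\textbf{f}}\,),n,\textit{\textbf{f}},\varepsilon)$ at the \emph{same} $\varepsilon$, and dividing by $|\log\varepsilon|$ finishes. Your closing remark about ``absorbing the wandering part into the bound in one shot'' is the right instinct, but to execute it you must (i) accept a factor polynomial in $k$ (of degree $m$), not a constant, and (ii) compare directly with $\text{cov}(\Omega(\textit{\textbf{f}}\,),n,\textit{\textbf{f}},\varepsilon)$ by choosing the wandering pieces and $\beta$ subordinate to one and the same minimal $(n,\varepsilon)$-cover of $X$; that is precisely what removes any need for $\Omega_\eta$ and the $\eta\to0$ limit.
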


\begin{proof}
It is clear that $ {\text{mdim}_M}(X,\textit{\textbf{f}},d)\geq {\text{mdim}_M}(\Omega({\textit{\textbf{f}}}\,),\textit{\textbf{f}},d)$.
Fix $\varepsilon>0$ and $n\in\mathbb N$. Let $\alpha $ be an open $(n ,\textit{\textbf{f}},\varepsilon)$-cover of $X$ with minimum cardinality. Take $\beta$ a minimal finite open subcover of $\Omega(\textit{\textbf{f}}\,)$, chosen from $\alpha$ (note that $\beta$ is an $(n,\textit{\textbf{f}},\varepsilon)$-cover of $\Omega(\textit{\textbf{f}}\,)$). By the minimality of $\alpha$ we have that $\beta $ is an $(n,\textit{\textbf{f}}, \varepsilon)$-cover of $\Omega(\textit{\textbf{f}}\,)$  with minimum cardinality, which we denote by $ \text{cov}(\Omega(\textit{\textbf{f}}\,),n,\textit{\textbf{f}}, \varepsilon)$, i.e., $\text{Card}(\beta)=\text{cov}(\Omega(\textit{\textbf{f}}\,),n,\textit{\textbf{f}}, \varepsilon)$.

\medskip

The set $K=X\backslash \bigcup_{U\in\beta}U$ is compact and consists of wandering points. We can cover
$K$ by a finite number of wandering  subsets, each of them contained in some element of  $\alpha$. The sets defined before together
with $\beta$ form a finite open cover $\gamma(n)=\gamma$ of $X$, finer than $\alpha$. Consider, for each $k$, the open cover $\gamma(k,\textit{\textbf{f}}^{(n)})$
associated to the sequence $\textit{\textbf{f}}^{(n)}$. Note that each element of $\gamma(k,\textit{\textbf{f}}^{(n)})$ is of the form
$$
A_0\cap (f_1^{(n)})^{-1}(A_1)\cap (f_{1}^{(n)})\circ (f_{n+1}^{(n)})^{-1}(A_2)\cap\dots\cap(f_1^{(n)})^{-1}\circ \dots\circ(f_{(k-2)n+1}^{(n)})^{-1}(A_{k-1}),
$$
where $A_i\in\gamma$, for $i=0\dots,k-1$. It implies that $\gamma(k,\textit{\textbf{f}}^{(n)})$ is a $(k,\textit{\textbf{f}}^{(n)},\varepsilon)$-cover of $X$. Let $A_i$ and $A_j$ be  nonempty  open sets of $\gamma(k,\textit{\textbf{f}}^{(n)})$  for
some $i<j$. If $A_i=A_j$, then $$(f_{(j-1)n+1}^{(n)}\circ\dots\circ f_{in+1}^{(n)}) (A_i)=f_{in+1}^{(j-i)n}(A_i)$$ intersects $A_i=A_j$. In that case $A_i$ does not contain non-wandering points for $\textit{\textbf{f}}$ (and hence $A_i\in \beta$). Now we estimate the number of elements of $\gamma(k,\textit{\textbf{f}}^{(n)})$.
Setting \[{j:=\text{Card} \{A_{i}:i=0,1,\dots k-1\}\quad \text{and}\quad  m:=\text{Card}(\gamma(k,\textit{\textbf{f}}^{(n)})\backslash \beta)}, \] we have 
 $0\leq j\leq m$. 
In this case we have $\binom{m}{j}$ possibilities of the choice of a $j$-element subset of $\gamma(k,\textit{\textbf{f}}^{(n)})\backslash \beta$ and then these sets
can appear as various $A_i'$s in $k\cdot(k-1)\cdots(k-j+1)=k!\slash(k-j)!$ ways. For the rest of $A_i'$s we can choice any element of $\beta$.
So, the number of elements of $\gamma(k,\textit{\textbf{f}}^{(n)})$ is bounded by
$$
\sum_{j=0}^m\binom{m}{j}\frac{k!}{(k-j)!}\cdot (\text{Card}(\beta))^{k-j}.
$$
Since $k!\slash(k-j)!\leq k^m$ and $\binom{m}{j}\leq m!$, this number is not larger than $(m+1)\cdot m!\cdot k^m\cdot (\text{Card}(\beta))^{k}$.
Thus, using the fact that $\text{cov}(k,\textit{\textbf{f}}^{(n)},\varepsilon)\leq \text{Card}(\gamma(k,\textit{\textbf{f}}^{(n)}))$, we have
\begin{align*}
\limsup_{k\to\infty}\frac{1}{k}\log \text{cov}(k,\textit{\textbf{f}}^{(n)},\varepsilon)\leq \limsup_{k\to\infty}\frac{1}{k}\log(m+1)\cdot m!\cdot k^m\cdot (\text{Card}(\beta))^{k}=\log(\text{Card}(\beta)).
\end{align*}
As
\begin{align*}
\limsup_{k\to\infty}\frac{1}{k}\log \text{cov}(k,\textit{\textbf{f}}^{(n)},\varepsilon)=n  \limsup_{k\to\infty}\frac{1}{k}\log \text{cov}(k,\textit{\textbf{f}},\varepsilon),
\end{align*}
it follows that
\begin{align*}
\limsup_{k\to\infty}\frac{1}{k} \log \text{cov}(k,\textit{\textbf{f}},\varepsilon)
                                     &\leq\frac{1}{n} \log \text{cov}(\Omega(\textit{\textbf{f}}\,),n,\textit{\textbf{f}},\varepsilon).
\end{align*}
Taking the limsup as $n\to\infty$  we obtain $$\text{cov}(\textit{\textbf{f}},\varepsilon)\leq \limsup_{n\rightarrow\infty}\frac{1}{n} \log \text{cov}(\Omega(\textit{\textbf{f}}\,),n,\textit{\textbf{f}},\varepsilon):= \text{cov}(\Omega(\textit{\textbf{f}}\,),\textit{\textbf{f}},\varepsilon).$$ 
So,
\begin{align*}
 {\text{mdim}_M}(X,\textit{\textbf{f}},d)&=\liminf_{\varepsilon\to0}\frac{\text{cov}(\textit{\textbf{f}},\varepsilon)}{|\log\varepsilon|}
                                         \leq \liminf_{\varepsilon\to0}\frac{ \text{cov}(\Omega({\textit{\textbf{f}}}\,),\textit{\textbf{f}},\varepsilon)}{|\log\varepsilon|}
                                         = {\text{mdim}_M}(\Omega(\textit{\textbf{f}}\,),\textit{\textbf{f}},d),
\end{align*}
which proves  the theorem.
\end{proof}

\begin{definition}
A continuous map $\psi:X\to Y$ will be called $\alpha$-\textit{compatible} if it is possible to find a finite open
cover $\beta $ of $\psi(X)$ such that $\psi^{-1}(\beta)\succ\alpha$.
\end{definition}

Lindenstrauss  and  Weiss in \cite{lind}, Theorem 4.2, proved that for any metric $d$ compatible with the topology of $X$, we have \[
 \text{mdim}(X,\phi)\leq  \underline{\text{mdim}_M}(X,\phi,d) 
\] for any continuous map $\phi:X\rightarrow X$. These ideas work in order to show  the non-autonomous case: metric mean dimension is an upper bound for the mean dimension of non-autonomous dynamical systems. We will need the next proposition, whose proof  can be found in \cite{lind}, Proposition 2.4. 
\begin{proposition}\label{alpha-comp}
If $\alpha $ is an open cover of $X$, then $\mathcal D(\alpha)\leq k$ if and only if
there exists an $\alpha$-compatible continuous map $\psi:X\to K$, where $K$ has topological dimension $k$.
\end{proposition}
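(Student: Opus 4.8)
The plan is to prove both directions via the nerve construction, together with the classical characterization of covering dimension: for a compact metrizable space $Y$, one has $\dim Y\le k$ if and only if every finite open cover of $Y$ admits a finite open refinement $\beta$ with $\text{ord}(\beta)\le k$. This is the standard ``canonical map into a polyhedron'' argument; I sketch it here for completeness.

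For the implication $\mathcal D(\alpha)\le k\Rightarrow(\exists\,\psi)$, I would start from an open cover $\beta\succ\alpha$ with $\text{ord}(\beta)\le k$, which by compactness of $X$ may be taken finite, say $\beta=\{V_1,\dots,V_r\}$. Let $K$ be the geometric realization of the nerve of $\beta$, whose vertices are the $V_i$ and in which $\{V_{i_0},\dots,V_{i_s}\}$ spans a simplex iff $V_{i_0}\cap\cdots\cap V_{i_s}\neq\emptyset$; since $\text{ord}(\beta)\le k$, no $k+2$ members of $\beta$ have a common point, so $\dim K\le k$, and after replacing $K$ by its disjoint union with a $k$-simplex (which does not affect the image of $X$) we may assume $\dim K=k$. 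Taking a partition of unity $\{\rho_i\}_{i=1}^{r}$ subordinate to $\beta$, the canonical map $\psi(x)=\sum_i\rho_i(x)V_i\in K$ (barycentric coordinates) is continuous, and for the open star $\mathrm{St}(V_i)$ of the vertex $V_i$ one has $\psi^{-1}(\mathrm{St}(V_i))=\{x:\rho_i(x)>0\}\subseteq V_i$. Hence $\{\mathrm{St}(V_i)\}_i$ is a finite open cover of $\psi(X)$ whose pullback refines $\beta$, and therefore refines $\alpha$, so $\psi$ is $\alpha$-compatible.

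For the converse I would take an $\alpha$-compatible $\psi\colon X\to K$ with $\dim K=k$ and a finite open cover $\beta$ of $\psi(X)$ with $\psi^{-1}(\beta)\succ\alpha$. Since $\psi(X)$ is a subspace of $K$, $\dim\psi(X)\le k$, so $\beta$ admits a finite open refinement $\beta'$ of $\psi(X)$ with $\text{ord}(\beta')\le k$. Then $\psi^{-1}(\beta')$ refines $\psi^{-1}(\beta)\succ\alpha$, and it has order at most $k$ because $x\in\bigcap_{j=0}^{s}\psi^{-1}(W_j)$ for distinct $W_0,\dots,W_s\in\beta'$ forces $\psi(x)\in\bigcap_{j=0}^{s}W_j$, hence $s\le k$. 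Therefore $\mathcal D(\alpha)\le\text{ord}(\psi^{-1}(\beta'))\le k$.

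I do not expect a serious obstacle; the only points requiring care are (a) applying the covering-dimension characterization to the compact metrizable subspace $\psi(X)$ rather than to $K$ itself, (b) using that taking preimages under $\psi$ refines covers and cannot increase the order of a cover, and (c) observing that on a finite simplicial complex the weak and metric topologies coincide, so the open stars are genuinely open and the canonical map $\psi$ is well defined and continuous.
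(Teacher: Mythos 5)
Your argument is correct, and it is essentially the paper's approach: the paper does not prove this proposition but cites Lindenstrauss--Weiss (Proposition 2.4), whose proof is exactly the standard nerve/canonical-map construction you give, with the converse via pulling back a low-order refinement of a cover of $\psi(X)$. The technical points you flag (monotonicity of dimension for the compact subspace $\psi(X)$, preimages not increasing order, continuity of the canonical map into the finite nerve) are handled correctly, so nothing is missing.
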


\begin{theorem}\label{efgetrr}
For any metric $d$ on $X$  compatible with the topology of $X$ we have that
\[
 \emph{mdim}(X,\textit{\textbf{f}}\,)\leq  \underline{\emph{mdim}_M}(X,\textit{\textbf{f}},d).
\]
\end{theorem}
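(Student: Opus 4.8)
The plan is to adapt the argument of Lindenstrauss and Weiss \cite[Theorem 4.2]{lind} to the non-autonomous setting, using Proposition \ref{alpha-comp} to translate the combinatorial quantity $\mathcal D(\alpha_0^{n-1})$ into the existence of an $\alpha_0^{n-1}$-compatible map into a low-dimensional polyhedron, and then comparing dimensions of polyhedra via their covering behaviour. First I would fix an arbitrary open cover $\alpha$ of $X$ and a Lebesgue number type estimate: choose $\varepsilon>0$ small enough that every set of $d$-diameter less than $\varepsilon$ is contained in an element of $\alpha$. Then any $(n,\textit{\textbf{f}},\varepsilon)$-cover $\gamma$ of $X$ refines $\alpha_0^{n-1}$, since a set of $d_n$-diameter less than $\varepsilon$ has the property that its image under each $f_1^{(k)}$, $0\le k\le n-1$, has $d$-diameter less than $\varepsilon$ and hence lies in an element of $\alpha$; intersecting these pull-backs shows the set lies in an element of $\alpha_0^{n-1}$.

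The key step is then to bound $\mathcal D(\alpha_0^{n-1})$ by $\log\mathrm{cov}(n,\textit{\textbf{f}},\varepsilon)/\log(\text{something})$, or more precisely to use the nerve of the cover $\gamma$: the nerve is a simplicial complex with $\mathrm{cov}(n,\textit{\textbf{f}},\varepsilon)$ vertices, and there is a canonical $\gamma$-compatible map $X\to |\mathrm{Nerve}(\gamma)|$ (a partition of unity argument). Since $\gamma\succ\alpha_0^{n-1}$, this map is also $\alpha_0^{n-1}$-compatible. A simplicial complex with $N$ vertices has a subcomplex of dimension at most $\log_2 N$ onto which it retracts after subdivision — more carefully, one invokes the standard fact (used in \cite{lind}) that a compact polyhedron built from $N$ closed sets can be mapped into a polyhedron of dimension $O(\log N)$; combining with Proposition \ref{alpha-comp} yields $\mathcal D(\alpha_0^{n-1}) \le c\log \mathrm{cov}(n,\textit{\textbf{f}},\varepsilon)$ for a universal constant, but actually the sharper route of Lindenstrauss--Weiss avoids the constant by working directly with the width/dimension of covers. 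I would follow their route: $\mathcal D(\alpha_0^{n-1})\le \dim(\text{polyhedron}) $ and the polyhedron can be taken with $\mathrm{cov}(n,\textit{\textbf{f}},\varepsilon)$ vertices, and one uses $\dim \le$ (number of vertices) trivially but then takes logarithms only after dividing by $n$ and letting $\varepsilon\to 0$, exploiting that $\dim$ of the relevant polyhedron is $\le \log_2(\#\text{vertices})$ via a generic projection. Dividing by $n$, letting $n\to\infty$, then dividing by $|\log\varepsilon|$ and letting $\varepsilon\to 0$ gives $\lim_n \mathcal D(\alpha_0^{n-1})/n \le \underline{\mathrm{mdim}_M}(X,\textit{\textbf{f}},d)$, and taking the supremum over $\alpha$ finishes the proof.

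The main obstacle I expect is the polyhedral dimension-counting step: turning "$\alpha_0^{n-1}$ is refined by a cover with $\mathrm{cov}(n,\textit{\textbf{f}},\varepsilon)$ elements" into "$\mathcal D(\alpha_0^{n-1})\lesssim \log \mathrm{cov}(n,\textit{\textbf{f}},\varepsilon)$" requires the non-trivial geometric input that a finite complex with $N$ vertices admits, after barycentric subdivision, a map to its $\lfloor\log_2 N\rfloor$-skeleton that is still compatible with the original cover — this is exactly the content of the analogous passage in \cite{lind} and transfers verbatim since the dynamics enters only through the fixed cover $\alpha_0^{n-1}$ of the fixed space $X$. Everything dynamical (the non-autonomous composition $f_1^{(k)}$, the metric $d_n$) is confined to the elementary observation that $(n,\textit{\textbf{f}},\varepsilon)$-covers refine $\alpha_0^{n-1}$, so once that is in place the rest of the proof is identical to the autonomous case and I would simply cite \cite{lind} for the polyhedral estimate rather than reprove it.
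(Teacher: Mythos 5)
Your whole argument funnels through the claim that if $\alpha_0^{n-1}$ is refined by a cover $\gamma$ with $N=\mathrm{cov}(n,\textit{\textbf{f}},\varepsilon)$ elements, then $\mathcal D(\alpha_0^{n-1})\le c\log N$, obtained by mapping $X$ into the nerve of $\gamma$ and then ``retracting, after subdivision, onto a $\lfloor\log_2 N\rfloor$-dimensional skeleton'' compatibly with the cover. No such fact is true, and it is not the content of any step in Lindenstrauss--Weiss. The nerve map only gives $\mathcal D(\alpha_0^{n-1})\le\dim\mathrm{Nerve}(\gamma)$, which can be as large as $N-1$, and compatible dimension reduction below this is impossible in general: for the cover of the $(N-1)$-simplex by the $N$ open stars of its vertices, every cover-compatible map has image of topological dimension at least $N-1$ (this is the KKM/Lebesgue covering theorem), so $\mathcal D=N-1\gg\log_2 N$. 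The same obstruction is what makes $\mathcal D$ of the standard join cover of $[0,1]^m$ equal to $m$, the fact LW use to compute $\mathrm{mdim}(([0,1]^m)^{\mathbb Z},\sigma)=m$. In Theorem 4.2 of \cite{lind} the dimension bound produced is \emph{linear in the time parameter} $N$ with slope $D+\varepsilon$, where $D$ is the metric mean dimension; it is never a logarithm of a covering number.

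Even granting your claimed estimate, the limiting procedure does not close. Since $\varepsilon$ must stay below a Lebesgue number of the fixed cover $\alpha$, and $\log\mathrm{cov}(n,\textit{\textbf{f}},\varepsilon)\approx n\,\mathrm{cov}(\textit{\textbf{f}},\varepsilon)\approx n\,D\,|\log\varepsilon|$ for small $\varepsilon$, dividing by $n$ only gives $\lim_n\mathcal D(\alpha_0^{n-1})/n\le c\,D\,|\log\varepsilon|$; the final step ``divide by $|\log\varepsilon|$ and let $\varepsilon\to0$'' is illegitimate because the left-hand side does not depend on $\varepsilon$ --- dividing it by $|\log\varepsilon|$ yields only the trivial inequality $0\le D$, not the theorem. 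This $|\log\varepsilon|$ loss is exactly what the paper's proof is built to avoid: there $\alpha$ is written as a join of two-element covers, the Lipschitz functions $\omega_i$ and the map $F(N,\cdot):X\to[0,1]^{\ell N}$ transport the problem to the cube, the covering number enters at a scale $\delta$ decoupled from the Lebesgue number of $\alpha$ (the cover influences only the Lipschitz constant $C$), and a probabilistic choice of $\xi$ followed by radial retractions onto the skeleton $J_{m_0}$ produces an $\alpha_0^{N-1}$-compatible map into a space of dimension $m_0\le\lfloor D+\varepsilon\rfloor N+1$, whence $\mathcal D(\alpha_0^{N-1})\le(D+\varepsilon)N+1$ with no $|\log\varepsilon|$ factor. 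Your preliminary observation that an $(n,\textit{\textbf{f}},\varepsilon)$-cover refines $\alpha_0^{n-1}$ once $\varepsilon$ is below a Lebesgue number of $\alpha$ is correct, but on its own it can only yield bounds polluted by the fixed factor $|\log\varepsilon_\alpha|$, so the proposal as written does not prove the theorem.
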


\begin{proof}
Let $\alpha $ be an open cover of $X$. We can assume that $\alpha $ is of the form
\[\alpha=\{U_1,V_1\}\vee\dots \vee\{U_\ell,V_\ell\},\]
where each $\{U_i,V_i\}$ is  an open cover of $X$ with two elements. For each $1\leq i\leq \ell$ define
$\omega_i:X\to [0,1]$ by
\[
\omega_i(x)=\frac{d(x, X\backslash V_i)}{d(x, X\backslash U_i)+d(x, X\backslash V_i)}.
\]
It is not difficult to see that $\omega_i$ is Lipschitz, $U_i=\omega_i^{-1}([0,1))$ and $V_i=\omega_i^{-1}((0,1])$.

Let $C$ be a common bound for the Lipschitz constants of all $\omega_i$. For each positive integer
$N$ define $F(N,\cdot ):X\to[0,1]^{\ell N}$ by
\[
F(N,x)=(\omega_1(x),\dots, \omega_\ell (x),\omega_1(f_1(x)),\dots,\omega_\ell(f_1(x)),\dots,\omega_1(f_1^{(N)}(x)),\dots,\omega_\ell(f_1^{(N)}(x))).
\]
As $U_i=\omega_i^{-1}([0,1))$ and $V_i=\omega_i^{-1}((0,1])$ we have that  $F(N,\cdot)\succ\alpha_0^{N-1}$.

Now for each $S\subset \{1,\dots,\ell N\}$, for $x\in X$, denote by $F(N,x)_S$ the projection of $F(N,x)$ to the coordinates of
the index set $S$.

\noindent \textbf{Claim.} Let $\varepsilon>0$ and $D=\underline{\text{mdim}_M}(X,\textit{\textbf{f}},d)$. There exists $N(\varepsilon)>0$
so that, for all $N>N(\varepsilon)$ there exists $\xi\in (0,1)^{\ell N}$ which satisfies
$$
\xi_S\notin F(N,X)_S,
$$
for any subset $S\subset\{1,\dots,\ell N\}$ that satisfies $|S|>(D+\varepsilon)N$.
\begin{proof}
Let $\delta>0$ such that \[\delta<(2^\ell(2C)^{2D})^{-2\varepsilon} \quad\quad\text{ and }\quad \quad\frac{\text{sep}(\textit{\textbf{f}},\delta)}{\log \delta}=\underline{\text{mdim}_M}(X,\textit{\textbf{f}},d)+\frac{\varepsilon}{4}.
\]
We notice that for $N$ sufficiently large we can cover $X$ by $\delta^{-(D+\varepsilon\slash2)N}$ dynamical balls
$B(x,N,\delta)=\{y\in X:d_N(x,y)<\delta\}$. Since $C$ is the common Lipschitz constant for all $\omega_i$, 
we conclude that
\[
F(N,B(x,N,\delta))\subset\{a\in [0,1]^{\ell N}:\|F(N,x)-a\|_\infty<C\delta\},
\]
where $\|(a_1,\dots,a_{\ell N})-(b_1,\dots,b_{\ell N})\|_\infty=\sup_i|a_i-b_i|$. This fact implies that $F(N,X)$ can
be covered by $\delta^{-(D+\varepsilon\slash2)N}$ balls in the $\| \; \cdot\|_\infty$ norm of radius $C\delta$. Let
$B(1),\dots, B(K)$ be these balls, with $K=\delta^{-(D+\varepsilon\slash2)N}$.

Choose $\xi \in[0,1]^{\ell N}$ with uniform probability and notice that
\begin{align*}
  \mathbb P(\xi\in F(N,X)_S) & \leq\sum_{j=1}^{K}\mathbb{P}(\xi\in B(j)_S)\leq  \delta^{-(D+\varepsilon\slash2)N}(2C\delta)^{|S|},
\end{align*}
and so
\begin{align*}
  \mathbb P(\exists S:|S|>(D+\varepsilon)N \text{ and }\xi_S\in F(N,X)_S)
                & \leq \sum_{|S|>(D+\varepsilon)N} \mathbb{P}(\xi_S\in F(N,X)_S)\\
                & \leq (\sharp \text{ of such }S)\delta^{-(D+\varepsilon\slash2)N}(2C\delta)^{D+\varepsilon}N \\
                & \leq 2^{\ell N}((2C)^{2D}\delta^{\varepsilon\slash2})^N\ll1.
\end{align*}
Hence, with high probability, a random $\xi$ will satisfies the requirements.
\end{proof}

\noindent \textbf{Claim.} If $\pi: F(N,X)\to [0,1]^{\ell N}$ satisfies for both $a=0$ and $a=1$, and all $\xi\in[0,1]^{\ell N}$,
\[
    \{1\leq k\leq \ell N:\xi_k=a\}\subset \{1\leq k\leq \ell N:\pi(\xi)_S=a\},
\]
then $\pi\circ F(N,X)$ is compatible with $\alpha_0^{N-1}$.

\begin{proof}
  Given  $\xi\in [0,1]^{\ell N}$, define for $0\leq j< N$ and $1\leq i<\ell$
\[
W_{i,j}=\left\{
         \begin{array}{ll}
           (f_1^{(j)})^{-1}(U_i), & \hbox{ if }\xi_{j\ell+i}=0, \\
           (f_1^{(j)})^{-1}(V_i), & \hbox{ otherwise.}
         \end{array}
       \right.
\]
By the definition of $W_{i,j}$ we have that $(\pi\circ F(N,\cdot))^{-1}(\xi)\subset \displaystyle\bigcap_{1\leq i\leq \ell,\\ 0\leq j<N}W_{i,j}\in \alpha_0^{N-1}$. It follows that $\pi\circ F(N,X)$ is compatible with $\alpha_0^{N-1}$.
\end{proof}
For a fixed  $\varepsilon>0$, consider $\bar\xi$ and $N$ as in the first Claim. Set
\[
\Phi=\{\xi\in[0,1]^{\ell N}:\xi_k=\bar{\xi}_k \text{ for more than }(D+\varepsilon)N \text{ indexes }k\}.
\]
Then, $F(N,X)\subset \Phi^C=[0,1]^{\ell N}\backslash \Phi$.

Now, for each $m=1,2,\dots$, denote by $J_m$ the set
$$
J_m=\{\xi\in [0,1]^{\ell N}:\xi_i\in\{0,1\} \text{ for at least }m \text{ indexes }1\leq i\leq\ell N\}.
$$

Since $\bar\xi $ is in the interior of $[0,1]^{\ell N}$, one can define $\pi_1:[0,1]^{\ell N}\backslash \{\bar\xi\}\to J_1$
by mapping each $\xi$ to the intersection of the ray starting at $\bar\xi$ and passing through $\xi$ and $J_1$.
For each of the $(\ell N-1)$-dimensional cubes $I^t$ that comprises $J_1$ we can define a retraction on $I^t$ in a similar fashion
using as a center the projection of $\bar\xi$ on $I^t$. This will define a continuous retraction $\pi_2$ of $\Phi^C$ onto $J_2$.
As long as there is some intersection of $\Phi$ with the cubes in $J_m$ this process can be continued, thus we finally get
a continuous projection $\pi$ of $\Phi^C$ onto $J_{m_0}$, a space of topological dimension equals to $m_0$, with
$$
m_0\leq \lfloor D+\varepsilon\rfloor N+1,
$$
where $\lfloor x\rfloor =\max\{k\in \mathbb{Z}: k\leq x\}$. 
By construction,  $\pi$ satisfies the hypotheses of the second claim.
 Thus $\pi\circ F(N,\cdot)\succ\alpha_0^{N-1}$.
Moreover, since $F(N,X)\subset \Phi^C$, we have   $\pi(F(N,X))\subset J_{m_0}$.

Putting all together, we have constructed a $\alpha_0^{N-1}$ compatible function from $X$ to a space of
topological dimension less or equal to $\lfloor D+\varepsilon\rfloor N+1$, and so
$$
\frac{D(\alpha_0^{N-1})}{N}\leq \frac{\lfloor D+\varepsilon\rfloor N+1}{N}.
$$
As $\varepsilon$ goes to zero we get that $\text{mdim}(X,\textit{\textbf{f}}\,)\leq D$. 
\end{proof}

The inequality in the  theorem above can be strict for single maps and therefore for non-autonomous dynamical systems. 
 In \cite{lind2}, Theorem 4.3, is proved that if a continuous map $\phi:X\rightarrow X$ is an extension of a minimal system, then there is a metric $ d^{\prime}$ on $X$, equivalent to $d$, such that   $$\text{mdim}(X,\phi) = \underline{\text{mdim}_{M}}(X,\phi, d^{\prime}).$$


\section{Upper bound for the metric mean dimension}\label{Section5}
 
 As we saw in Remark \ref{tete}, we have  $ 
\text{mdim}(X^{\mathbb{K}},\sigma)\leq \text{dim}( X)
$, where $\mathbb{K}= \mathbb{Z}$ or $\mathbb{N}$. Furthermore,  if $X=I^{k}$, then $ 
\text{mdim}(X^{\mathbb{Z}},\sigma)= k$. 
 In this section we will prove that the metric mean dimension of the shift on  $X^{\mathbb{K}}$ is  equal to the box dimension of $X$ with respect to the metric $d$, which will be defined below. This fact implies that the metric mean dimension of any continuous map $\phi: X\rightarrow X$ is less or equal to the box dimension of $X$ with respect to the metric $d$ (see Proposition \ref{erfdy}).  

 \begin{definition} For $\varepsilon>0,$  let $N(\varepsilon)$ be the minimum number of closed balls of radious $\varepsilon$ needed
to cover $X$. The numbers 
$$\overline{\text{dim}_{B}}(X,d)=\limsup_{\varepsilon\rightarrow \infty}\frac{\log N(\varepsilon)}{|\log\varepsilon|} \quad \text{and}\quad \underline{\text{dim}_{B}}(X,d)=\liminf_{\varepsilon\rightarrow \infty}\frac{\log N(\varepsilon)}{|\log\varepsilon|}$$
 are called, respectively, the  \textit{upper Minkowski dimension} (or \textit{upper box
dimension}) of $X$ and the  \textit{lower Minkowski dimension} (or \textit{lower box
dimension}) of $X$, with respect to $d$. \end{definition}

For any metric space $(X,d)$ we have $$\text{dim}(X)\leq \text{dim}_{H}(X,d)\leq   \underline{\text{dim}_{B}}(X,d), $$
where $\text{dim}_{H}(X,d)$ is the Hausdorff dimension of $X$ with respect to $d$ (see \cite{Kawabata}, Section II, A). If $X=[0,1]$, then $\text{dim}(X)=\text{dim}_{H}(X,d)=   \underline{\text{dim}_{B}}(X,d)=1$. However, there exist sets such that the   inequalities above can be strict, as we will see in the next  example, which also proves  that neither $\text{dim}(X)$ nor  $\text{dim}_{H}(X,d)$ are    upper bounds for  $ \overline{\text{mdim}_{M}}(X^{\mathbb{Z}},\sigma,\tilde{d})$.

\begin{example}
Let  $A = \{0\} \cup \{1/n: n\geq 1\}$ endowed with the metric $d(x,y)=|x-y|$ for $x,y\in A$. In \cite{Kawabata}, Lemma 3.1, is proved that $\text{dim}_{H}(A) = 0$  while $\underline{\text{dim}_{B}}( A ) = 1/2.$  Furthermore, we have $$   \underline{\text{mdim}_{M}}(A^{\mathbb{Z}},\sigma,\tilde{d})= \underline{\text{dim}_{B}}( A ) = 1/2$$ (see \cite{lind3}, Section VII). 
\end{example}

Using the \textit{Classical Variational Principle}, in \cite{VV}, Theorem 5, the authors claim to have proven that for any $(X,d)$ 
 \begin{equation*} \overline{\text{mdim}_{M}}(X^{\mathbb{Z}},\sigma,\tilde{d})=\overline{\text{dim}_{B}}(X,d) .\end{equation*}

This fact can be proved generalizing the ideas given  in  \cite{lind3}, Example E:  
\begin{theorem}\label{bcbcbcbc1} For $\mathbb{K}=\mathbb{Z}$ or $\mathbb{N}$ we have    $$ \overline{\emph{mdim}_{M}}(X^{\mathbb{K}}, \sigma, \tilde{d}) =  \overline{\emph{dim}_{B}} (X,d) \quad \quad \text{and}\quad \quad \underline{\emph{mdim}_{M}}(X^{\mathbb{K}}, \sigma, \tilde{d})= \underline{\emph{dim}_{B}} (X,d). $$
\end{theorem}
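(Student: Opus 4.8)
The plan is to prove the two equalities $\overline{\mathrm{mdim}_M}(X^{\mathbb K},\sigma,\tilde d)=\overline{\mathrm{dim}_B}(X,d)$ and $\underline{\mathrm{mdim}_M}(X^{\mathbb K},\sigma,\tilde d)=\underline{\mathrm{dim}_B}(X,d)$ simultaneously, by establishing, for every sufficiently small $\varepsilon>0$, a two-sided comparison between $\mathrm{cov}(n,\sigma,\varepsilon)$ (the minimal cardinality of an $(n,\sigma,\tilde d_n)$-cover of $X^{\mathbb K}$) and $N(\varepsilon')$ (the minimal number of $d$-balls of radius $\varepsilon'$ covering $X$) for suitable $\varepsilon'$ comparable to $\varepsilon$. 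Since both $\overline{\mathrm{mdim}_M}$ and $\underline{\mathrm{mdim}_M}$ can be computed with $\mathrm{cov}$ in place of $\mathrm{sep}$ (as noted right after the definition of metric mean dimension), it suffices to show $\lim_{n\to\infty}\frac1n\log\mathrm{cov}(n,\sigma,\varepsilon)$ is squeezed between $\log N(c_1\varepsilon)$ and $\log N(c_2\varepsilon)$ for absolute constants $c_1,c_2$; dividing by $|\log\varepsilon|$ and taking $\limsup$ (resp. $\liminf$) as $\varepsilon\to0$ then yields both statements at once, because $\log N(c\varepsilon)/|\log\varepsilon|$ and $\log N(\varepsilon)/|\log\varepsilon|$ have the same upper and lower limits (the multiplicative constant is absorbed). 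I would do the case $\mathbb K=\mathbb Z$ in detail and remark that $\mathbb K=\mathbb N$ is identical (indeed slightly easier).

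The key estimates go as follows. \textbf{Upper bound.} Fix $\varepsilon>0$ and choose $l=l(\varepsilon)\in\mathbb N$ with $\sum_{|i|>l}2^{-|i|}\cdot\mathrm{diam}(X)<\varepsilon/2$; note $l=O(\log(1/\varepsilon))$. Cover $X$ by $N(\varepsilon/4)$ balls of radius $\varepsilon/4$. For a word $(n,\sigma)$-cover of $X^{\mathbb Z}$, take all products $\prod_{|i|\le n+l}B_{k_i}\times\prod_{|i|>n+l}X$ where each $B_{k_i}$ ranges over the chosen $N(\varepsilon/4)$ balls. A direct computation with $\tilde d_n$, exactly as in the proof of Lemma \ref{bcbcbcbc}, shows each such set has $\tilde d_n$-diameter $<\varepsilon$, so $\mathrm{cov}(n,\sigma,\varepsilon)\le N(\varepsilon/4)^{2(n+l)+1}$, whence $\mathrm{cov}(\sigma,\varepsilon)=\lim_n\frac1n\log\mathrm{cov}(n,\sigma,\varepsilon)\le 2\log N(\varepsilon/4)$. \textbf{Lower bound.} Let $S\subset X$ be a maximal $\varepsilon$-separated set in $(X,d)$; then $|S|\ge N(\varepsilon)$ (a maximal $\varepsilon$-separated set is an $\varepsilon$-net). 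The set of sequences $(x_i)\in X^{\mathbb Z}$ with $x_i\in S$ for $0\le i<n$ (arbitrary elsewhere, say constant) is $(n,\sigma,\tilde d_n)$-separated at scale comparable to $\varepsilon$: if two such sequences differ at some coordinate $0\le j<n$, then after applying $\sigma^j$ the zeroth coordinates differ by $\ge\varepsilon$ in $d$, so $\tilde d(\sigma^j\bar x,\sigma^j\bar y)\ge\varepsilon$, hence $\tilde d_n>\varepsilon$ — actually one wants $\tilde d$-separation, which holds since the zeroth coordinate contributes its full $d$-distance with weight $2^0=1$. Thus $\mathrm{sep}(n,\sigma,\varepsilon)\ge N(\varepsilon)^n$, giving $\mathrm{cov}(\sigma,\varepsilon)\ge\mathrm{sep}(\sigma,2\varepsilon)\ge\cdots$, or more cleanly $\mathrm{cov}(\sigma,\varepsilon)\ge\log N(2\varepsilon)$ after the standard comparison $\mathrm{cov}(n,\sigma,2\varepsilon)\le\mathrm{sep}(n,\sigma,2\varepsilon)\le\mathrm{cov}(n,\sigma,\varepsilon)$... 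I would phrase the lower bound so that $\mathrm{cov}(n,\sigma,\varepsilon)\ge N(2\varepsilon)^n$ directly by noting an $(n,\sigma,\varepsilon)$-cover cannot have two points of the above separated set in one element.

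Combining, for all small $\varepsilon$:
\[
\log N(2\varepsilon)\ \le\ \mathrm{cov}(\sigma,\varepsilon)\ \le\ 2\log N(\varepsilon/4).
\]
Dividing by $|\log\varepsilon|$ and taking $\limsup_{\varepsilon\to0}$ gives $\overline{\mathrm{dim}_B}(X,d)\le\overline{\mathrm{mdim}_M}(X^{\mathbb K},\sigma,\tilde d)\le\overline{\mathrm{dim}_B}(X,d)$, using that $\limsup_{\varepsilon\to0}\frac{\log N(c\varepsilon)}{|\log\varepsilon|}=\overline{\mathrm{dim}_B}(X,d)$ for any fixed $c>0$ (and the factor $2$ on the right is harmless since it multiplies $\log N$, not $|\log\varepsilon|$, and $\lim_{\varepsilon\to0}\frac{\log(1/\varepsilon)+O(\log\log(1/\varepsilon))}{\cdots}$... wait — the factor $2$ in front of $\log N(\varepsilon/4)$ does \emph{not} disappear; I must be more careful here). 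The fix: in the upper bound one should cover a single coordinate block of length $\sim n$ (not $2n$) more efficiently, or rather accept that for $\mathbb K=\mathbb Z$ the block has length $2n+O(\log 1/\varepsilon)$ but the ``per-coordinate'' count is $N(\varepsilon/4)$, and the dynamical normalization divides by $n$, so one genuinely needs the sharper bound $\mathrm{cov}(n,\sigma,\varepsilon)\le N(c\varepsilon)^{n+O(\log1/\varepsilon)}$ — achievable by using that only coordinates $0,\dots,n-1$ need full resolution $\varepsilon$, while coordinates $-l,\dots,-1$ and $n,\dots,n+l$ need only resolution $\mathrm{diam}(X)\cdot 2^{\text{(distance)}}$, i.e. $N(\varepsilon/4)^{l}$ extra factors with $l=O(\log1/\varepsilon)$, contributing $0$ to the limit. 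So $\mathrm{cov}(\sigma,\varepsilon)\le\log N(\varepsilon/4)$, matching the lower bound $\log N(2\varepsilon)$. \textbf{The main obstacle} is exactly this bookkeeping: getting the dynamical (per-$n$) constant in front of $\log N$ to be \emph{exactly} $1$, not $2$, by exploiting that in $\tilde d_n$ the badly-resolved tail coordinates number only $O(\log1/\varepsilon)$ and thus wash out after dividing by $n$ and taking $n\to\infty$ — this is precisely the content of the Lemma \ref{bcbcbcbc} computation, which I would cite and adapt rather than redo. Finally, $\mathbb K=\mathbb N$ is handled verbatim with one-sided blocks $0,\dots,n+l$, which is the case literally done in Lemma \ref{bcbcbcbc} for $X=[0,1]$.
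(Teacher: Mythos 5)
Your corrected argument is sound and is essentially the paper's own proof: the paper likewise covers only the window of coordinates $-l,\dots,n+l$ with $l=O(\log(1/\varepsilon))$ per-coordinate $\varepsilon$-balls (so after dividing by $n$ the coefficient of $\log N$ is $1$, not $2$), obtains the lower bound from product $\varepsilon$-separated sets placed in the coordinates $0,\dots,n-1$, and absorbs all multiplicative constants in $\varepsilon$ upon dividing by $|\log\varepsilon|$, exactly as in its Lemma on $([0,1]^{\mathbb N},\sigma)$. The only blemishes are cosmetic: the factor-$2$ slip you catch and repair mid-proof (your repair is precisely the paper's bookkeeping), and the diameter constant for radius-$\varepsilon/4$ balls, neither of which affects the limits.
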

\begin{proof} We will prove the case $\mathbb{K}=\mathbb{Z}$ (the case $\mathbb{K}=\mathbb{N}$ can be proved analogously as in Lemma \ref{bcbcbcbc}).   Fix $\varepsilon>0$ and take $l$ big enough such that   $\sum_{n>l} 2^{-n}\text{diam} (X)\leq \varepsilon/2 $. Let $m=N(\varepsilon)$ be the minimum   number of closed $\varepsilon$-balls $X_{1},\dots ,X_{m}$ needed
to cover $X$. Consider the open cover of $X^{\mathbb{Z}}$ given by the open sets
$$\cdots \times X \times X_{k_{-l}}\times X_{k_{-l+1}}\times \cdots \times X_{k_{n+l}}\times X\times\cdots,\quad  \text{ where }1\leq k_{-l}, k_{-l+1},\dots, k_{n+l}\leq    m. $$ 
Note that each one of these open sets has diameter less than $4\varepsilon$ 
 with respect to the
distance $\tilde{d}_{n}$ on $X^{\mathbb{Z}}$. Therefore  $ \text{cov}(n,\sigma,4\varepsilon)\leq m ^{n+2l+1}$ and hence 
$$\text{cov}(\sigma,4\varepsilon)=\lim_{n\rightarrow \infty}\frac{\log\text{cov}(n,\sigma,4\varepsilon)}{n}\leq \lim_{n\rightarrow \infty}\frac{(n+2l+1)\log (m)}{n}=\log N(\varepsilon),  $$ 
which implies that $$ \overline{\text{mdim}_{M}}(X^{\mathbb{Z}}, \sigma, \tilde{d}) =\limsup_{\varepsilon \rightarrow \infty} \frac{\text{cov}(\sigma,4\varepsilon)}{|\log 4\varepsilon|}\leq \limsup_{\varepsilon \rightarrow \infty} \frac{\log N(\varepsilon)}{|\log 4\varepsilon|}=  \limsup_{\varepsilon \rightarrow \infty} \frac{\log N(\varepsilon)}{|\log4 + \log \varepsilon|}=\overline{\text{dim}_{B}} (X,d) $$ and
$$ \underline{\text{mdim}_{M}}(X^{\mathbb{Z}}, \sigma, \tilde{d}) =\liminf_{\varepsilon \rightarrow \infty} \frac{\text{cov}(\sigma,4\varepsilon)}{|\log 4\varepsilon|}\leq  \underline{\text{dim}_{B}} (X,d), $$

To prove the converse inequality, for $\varepsilon>0$  let  $\{x_1,x_2,\dots,x_{N(\varepsilon)}\}$ be a maximal set of  points in $X$ which are $\varepsilon$-separated.
 For $n\geq 1$, consider the set
$$ \{(y_{i})_{i\in\mathbb{Z}}\in X^{\mathbb{Z}} : y_{i} \in \{x_1,x_2,  \dots ,x_{N(\varepsilon)}\} \text{ for all }-l\leq i\leq n+l\} $$ 
and notice that it is $(\sigma,n,\varepsilon)$-separated and its cardinality is bounded from below by $N(\varepsilon)^{n+2l+1}$. So 
$$
\text{sep}(\sigma,\varepsilon)\geq \lim_{n\to\infty}\frac{\log N(\varepsilon)^{n+2l+1}}{n}=\log N(\varepsilon),
$$
and it implies that $$\underline{\text{mdim}_{M}}(X^{\mathbb{Z}}, \sigma, \tilde{d}) \geq \underline{\text{dim}_{B}} (X,d),$$
which proves the theorem. 
\end{proof}

 Next proposition proves  the metric mean dimension of any dynamical system is bounded by the box dimension of the space (see \cite{VV}, Remark 4).

\begin{proposition}\label{erfdy}  For any continuous map $\phi:X\rightarrow X$  we have 
$$\overline{\emph{mdim}_{M}}(X,\phi,d) \leq \overline{\emph{dim}_{B}} (X,d) \quad \text{ and }\quad \underline{\emph{mdim}_{M}}(X,\phi,d) \leq \underline{\emph{dim}_{B}} (X,d) .$$  In particular, if $X=[0,1]$, then  $$\underline{\emph{mdim}_{M}}(X,\phi,d)\leq \overline{\emph{mdim}_{M}}(X,\phi,d)\leq 1.$$ 
\end{proposition}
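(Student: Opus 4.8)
The plan is to embed $(X,\phi)$ into the one-sided full shift over $X$ and then invoke Theorem~\ref{bcbcbcbc1}. Concretely, I would consider the orbit map $\iota\colon X\to X^{\mathbb{N}}$ given by $\iota(x)=(x,\phi(x),\phi^{2}(x),\dots)$. It is continuous (each coordinate $x\mapsto\phi^{i}(x)$ is continuous and $X^{\mathbb{N}}$ carries the product topology), it is injective (the first coordinate of $\iota(x)$ is $x$), and it conjugates the dynamics: $\sigma\circ\iota=\iota\circ\phi$, hence $\sigma^{j}\circ\iota=\iota\circ\phi^{j}$ for every $j\ge 0$. The only metric input one needs is the one-sided bound
\[
\tilde d\bigl(\iota(x),\iota(y)\bigr)\ \ge\ \tfrac12\,d(x,y)\qquad (x,y\in X),
\]
which is immediate from the definition \eqref{mnvc} of $\tilde d$, because $d(x,y)$, multiplied by a coefficient that is $1$ or $\tfrac12$, already appears as one of the summands defining $\tilde d(\iota(x),\iota(y))$.

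With this in hand I would argue as follows. Fix $\varepsilon>0$ and $n\in\mathbb{N}$, and let $A\subseteq X$ be an $(n,\phi,2\varepsilon)$-separated set of maximal cardinality $\text{sep}(n,\phi,2\varepsilon)$. For distinct $x,y\in A$ choose $0\le j<n$ with $d(\phi^{j}(x),\phi^{j}(y))>2\varepsilon$; applying the displayed inequality to the pair $\phi^{j}(x),\phi^{j}(y)$ and using $\sigma^{j}\iota(x)=\iota(\phi^{j}(x))$ gives $\tilde d(\sigma^{j}\iota(x),\sigma^{j}\iota(y))>\varepsilon$, hence $\tilde d_{n}(\iota(x),\iota(y))>\varepsilon$. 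Thus $\iota(A)$ is an $(n,\sigma,\varepsilon)$-separated subset of $(X^{\mathbb{N}},\tilde d)$ and, by injectivity of $\iota$, $\text{sep}(n,\phi,2\varepsilon)\le\text{sep}(n,\sigma,\varepsilon)$. Taking $\limsup_{n\to\infty}\tfrac1n\log(\cdot)$ yields $\text{sep}(\phi,2\varepsilon)\le\text{sep}(\sigma,\varepsilon)$; dividing by $|\log 2\varepsilon|=|\log\varepsilon|-\log 2$ and using that $|\log\varepsilon|/(|\log\varepsilon|-\log 2)\to 1$ as $\varepsilon\to 0$ (so the cosmetic rescaling $\varepsilon\mapsto 2\varepsilon$ affects neither the outer $\liminf$ nor the outer $\limsup$), one obtains
\[
\underline{\text{mdim}_{M}}(X,\phi,d)\le\underline{\text{mdim}_{M}}(X^{\mathbb{N}},\sigma,\tilde d),\qquad \overline{\text{mdim}_{M}}(X,\phi,d)\le\overline{\text{mdim}_{M}}(X^{\mathbb{N}},\sigma,\tilde d).
\]
By Theorem~\ref{bcbcbcbc1} the right-hand sides equal $\underline{\text{dim}_{B}}(X,d)$ and $\overline{\text{dim}_{B}}(X,d)$ respectively, which gives the two desired inequalities. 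For the last assertion, $\overline{\text{dim}_{B}}([0,1],d)=1$ since the minimal number of $\varepsilon$-balls covering $[0,1]$ is of order $\varepsilon^{-1}$, and $\underline{\text{mdim}_{M}}\le\overline{\text{mdim}_{M}}$ always holds because a $\liminf$ never exceeds the corresponding $\limsup$; hence $\underline{\text{mdim}_{M}}([0,1],\phi,d)\le\overline{\text{mdim}_{M}}([0,1],\phi,d)\le 1$.

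I do not anticipate a serious obstacle once Theorem~\ref{bcbcbcbc1} is granted; the two points that require care are choosing the correct \emph{direction} of the metric comparison (separated sets for $\phi$ must push forward to separated sets for $\sigma$, which is why we bound $\tilde d$ from below by $d$ rather than from above) and verifying that the rescaling $\varepsilon\mapsto 2\varepsilon$ is harmless for the $\varepsilon\to 0$ limits. If one prefers to bypass the shift entirely, essentially the same bound follows directly: choosing a finite open cover $\mathcal U$ of $X$ by sets of $d$-diameter $<\varepsilon$ with $\#\mathcal U$ of the order of $N(\varepsilon)$, the refinement $\mathcal U\vee\phi^{-1}(\mathcal U)\vee\dots\vee\phi^{-(n-1)}(\mathcal U)$ is an $(n,\phi,\varepsilon)$-cover with at most $(\#\mathcal U)^{n}$ elements, whence $\text{cov}(\phi,\varepsilon)\le\log\#\mathcal U$, and dividing by $|\log\varepsilon|$ and letting $\varepsilon\to0$ again yields $\text{mdim}_{M}(X,\phi,d)\le\text{dim}_{B}(X,d)$ in both the upper and the lower version.
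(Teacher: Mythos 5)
Your main argument is correct and follows essentially the same route as the paper: embed $(X,\phi)$ into $(X^{\mathbb{N}},\sigma,\tilde d)$ via the orbit map and apply Theorem~\ref{bcbcbcbc1}, the only difference being that the paper phrases the metric comparison by pulling $\tilde d$ back to a metric $d_{\psi}\geq d$ on $X$, whereas you push separated sets forward with the harmless $\varepsilon\mapsto 2\varepsilon$ rescaling. The direct covering argument you sketch at the end is also valid and would bypass the shift entirely, but it is not needed.
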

\begin{proof}
Consider the embedding  $\psi: X\rightarrow X^{\mathbb{N}}$, defined by  $x\mapsto\psi(x)= (x,\phi(x),\phi^{2}(x),\dots)$. We have $\sigma\circ \psi=\psi \circ \phi$.   Therefore, $Y=\psi(X)$ is a closed subset of  $ X^{\mathbb{N}}$ invariant by $\sigma$.  Take the metric $d_{\psi}$ on $X$ defined by $ d_{\psi}(x,y)= \tilde{d}(\psi(x),\psi(y)),$  for any  $x,y\in X.$  Clearly $d(x,y)\leq d_{\psi}(x,y)$ for any $x,y\in X$, therefore any $ (n,\phi,\varepsilon)$-separated subset of $X$ with respect to $d$ is a $ (n,\phi,\varepsilon)$-separated subset of $X$ with respect to $d_{\psi}$. Hence $$\overline{\text{mdim}_{M}}(X,\phi,d)\leq  \overline{\text{mdim}_{M}}(X,\phi,d_{\psi}) =\overline{\text{mdim}_{M}}(Y,\sigma|_{Y},\tilde{d})\leq \overline{\text{mdim}_{M}}(X^{\mathbb{N}},\sigma,\tilde{d}) \leq \overline{\text{dim}_{B}} (X,d) $$
and, analogously, $ \underline{\text{mdim}_{M}}(X,\phi,d)\leq   \underline{\text{dim}_{B}} (X,d). $
\end{proof}
 
 Example \ref{exfagner} proves that there exist dynamical systems $\phi:X\rightarrow X$ such that  $$\overline{\text{mdim}_M}(X,\phi,d)=\overline{\text{dim}_{B}} (X,d)\quad \text{ and }\quad \underline{\text{mdim}_M}(X,\phi,d)=\underline{\text{dim}_{B}} (X,d).$$

  \medskip

 We can consider the \emph{asymptotic metric mean dimension} as the   limit
\begin{align*}\label{eq:assymp}
 {\text{mdim}_M}(X,\textit{\textbf{f}},d)^*=\limsup_{i\to\infty}{\text{mdim}_M}(X,\sigma^i(\textit{\textbf{f}}\,),d).
\end{align*}

\begin{theorem}\label{maintheoremE} If $\textit{\textbf{f}}=(f_n)_{n=1}^{\infty}$   converges uniformly to a continuous map $f:X\to X$, then, for any $k\geq 1$, 
\begin{equation}\label{tergss}  {\emph{mdim}_M}(X,\sigma^{k}(\textit{\textbf{f}}\,),d)\leq  {\emph{mdim}_M}(X,f,d).
  \end{equation}
  Consequently,  \[ {\emph{mdim}_M}(X,\textit{\textbf{f}},d)^*\leq  {\emph{mdim}_M}(X,f,d). \]
\end{theorem}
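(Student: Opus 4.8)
The plan is to reprise the suspension construction from the proof of Theorem~\ref{prop:unif-limit}, this time invoking Theorem~\ref{thm:non-wond} --- the metric mean dimension version of ``the dynamics concentrates on the non-wandering set'' --- in the role that Gutman's lemma played there. The only extra care needed is that $\text{mdim}_M$ depends on the metric, so the metric on the suspension must be chosen so that its restrictions to the relevant fibres are \emph{literally} $(X,d)$. Concretely, I would fix a sequence of pairwise distinct points $(x_n)_{n\geq 1}$ converging to a point $x_0\notin\{x_n:n\geq 1\}$, put $S=\{x_n:n\geq 0\}$ with the induced metric $d_S$ (so that $S$ is compact), and let $Y=S\times X$ with the metric $D((s,y),(s',y'))=\max\{d_S(s,s'),d(y,y')\}$. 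Define $F\colon Y\to Y$ by $F(x_n,y)=(x_{n+1},f_n(y))$ for $n\geq 1$ and $F(x_0,y)=(x_0,f(y))$; the uniform convergence $f_n\to f$ together with continuity of $f$ makes $F$ continuous, so $(Y,F,D)$ is a single-map dynamical system on a compact metric space. Since each $x_m$ with $m\geq 1$ is isolated in $S$ whereas $F^\ell(\{x_m\}\times X)\subseteq\{x_{m+\ell}\}\times X$ is disjoint from $\{x_m\}\times X$ for every $\ell\geq 1$, every point of $\{x_m:m\geq 1\}\times X$ is wandering for $F$; hence $\Omega(F)\subseteq\{x_0\}\times X$.

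Next I would note that $\{x_0\}\times X$ is $F$-invariant and that $(x_0,y)\mapsto y$ is an isometry of $(\{x_0\}\times X,D)$ onto $(X,d)$ conjugating $F|_{\{x_0\}\times X}$ with $f$, so $\text{mdim}_M(\{x_0\}\times X,F,D)=\text{mdim}_M(X,f,d)$. Applying Theorem~\ref{thm:non-wond} to the single map $F$, and then using that $\text{mdim}_M$ is monotone under passing to invariant subsets (cf.\ Proposition~\ref{invariant} and the remarks after it; here $\Omega(F)$ is an $F$-invariant subset of $\{x_0\}\times X$), we obtain
\[
\text{mdim}_M(Y,F,D)=\text{mdim}_M(\Omega(F),F,D)\leq\text{mdim}_M(\{x_0\}\times X,F,D)=\text{mdim}_M(X,f,d).
\]
All of this is valid simultaneously for $\overline{\text{mdim}_M}$ and $\underline{\text{mdim}_M}$, since Theorem~\ref{thm:non-wond} and the monotonicity are.

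Finally, fix $k\geq 1$ and set $A=\{x_m:m\geq k+1\}\times X$, which is forward $F$-invariant. Its fibre $\{x_{k+1}\}\times X$, carrying the $F$-dynamics and the metric $D$, is canonically isometric --- in a manner conjugating the dynamics --- to $(X,\sigma^k(\textit{\textbf{f}}\,),d)$: indeed $F^j(x_{k+1},y)=(x_{k+1+j},\,f_{k+j}\circ\cdots\circ f_{k+1}(y))$, so the iterated metric $D_j$ restricted to this fibre coincides with the metric $d_j$ of the system $\sigma^k(\textit{\textbf{f}}\,)=(f_{k+1},f_{k+2},\dots)$. Consequently $\text{sep}(n,\sigma^k(\textit{\textbf{f}}\,),\varepsilon)\leq\text{sep}(n,F|_A,\varepsilon)$ for all $n$ and $\varepsilon$, hence $\text{mdim}_M(X,\sigma^k(\textit{\textbf{f}}\,),d)\leq\text{mdim}_M(A,F,D)\leq\text{mdim}_M(Y,F,D)$; combining with the displayed inequality gives \eqref{tergss}, and taking $\limsup_{k\to\infty}$ yields $\text{mdim}_M(X,\textit{\textbf{f}},d)^*\leq\text{mdim}_M(X,f,d)$.

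The step I expect to carry the weight is the passage $\text{mdim}_M(Y,F,D)\leq\text{mdim}_M(X,f,d)$ through Theorem~\ref{thm:non-wond}: a direct estimate comparing $f_{k+1}^{(j)}$ with $f^j$ is unavailable, because compositions of maps each uniformly close to $f$ need not stay close to $f^j$, so errors accumulate along orbits; the suspension combined with the concentration of $\text{mdim}_M$ on the non-wandering set is exactly what circumvents this. The remaining ingredients --- continuity of $F$, that $\Omega(F)$ avoids the transient fibres, and that $D$ restricts correctly to the fibres $\{x_0\}\times X$ and $\{x_{k+1}\}\times X$ --- are routine.
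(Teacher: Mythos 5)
Your proposal is correct and follows essentially the same route as the paper, whose proof of Theorem~\ref{maintheoremE} is precisely ``repeat the suspension construction from Theorem~\ref{prop:unif-limit} and replace Gutman's lemma by Theorem~\ref{thm:non-wond}''; your argument is a faithful (and more carefully written) implementation of that. The extra attention you give to choosing the max-metric $D$ so that the fibres $\{x_0\}\times X$ and $\{x_{k+1}\}\times X$ are literally isometric to $(X,d)$ is exactly the detail the paper leaves implicit, and it is handled correctly.
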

\begin{proof}
See the proof of  Theorem \ref{prop:unif-limit} and use Theorem \ref{thm:non-wond}. 
\end{proof}

We can prove, as in Example \ref{egre}, that the  inequality above  can be strict.

\medskip

 Theorem \ref{maintheoremE} and Proposition \ref{erfdy} imply  that: 
\begin{corollary}\label{efrrttt}  
 If  $\textit{\textbf{f}}=(f_{n})_{n=1}^{\infty}$
  converges uniformly to a continuous map on $X$, then \begin{equation*}  \overline{\emph{mdim}_M}(X, \textit{\textbf{f}},d)\leq\overline{\emph{dim}_{B}} (X,d)\quad\text{and}\quad \underline{\emph{mdim}_M}(X, \textit{\textbf{f}},d)\leq\underline{\emph{dim}_{B}} (X,d).
  \end{equation*}  and therefore  \[\overline{\emph{mdim}_M}(X,\textit{\textbf{f}},d)^*\leq\overline{\emph{dim}_{B}} (X,d) \quad \text{and}\quad \underline{\emph{mdim}_M}(X,\textit{\textbf{f}},d)^*\leq\underline{\emph{dim}_{B}} (X,d) . \]  In particular, if $X=[0,1],$ then $\overline{\emph{mdim}_M}(X,\textit{\textbf{f}},d)^*\leq 1$. \end{corollary}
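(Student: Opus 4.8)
The plan is simply to concatenate Theorem~\ref{maintheoremE} with Proposition~\ref{erfdy}, running the argument in parallel for the upper and for the lower metric mean dimension. First I would apply Proposition~\ref{erfdy} to the limit map $f$, which gives
\[
\overline{\text{mdim}_M}(X,f,d)\leq \overline{\text{dim}_{B}}(X,d)
\qquad\text{and}\qquad
\underline{\text{mdim}_M}(X,f,d)\leq \underline{\text{dim}_{B}}(X,d).
\]
Then, since $\textit{\textbf{f}}$ converges uniformly to $f$, Theorem~\ref{maintheoremE} yields $\overline{\text{mdim}_M}(X,\textit{\textbf{f}},d)\leq \overline{\text{mdim}_M}(X,f,d)$ and $\underline{\text{mdim}_M}(X,\textit{\textbf{f}},d)\leq \underline{\text{mdim}_M}(X,f,d)$. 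That this also holds for $\textit{\textbf{f}}$ itself, and not only for the tails $\sigma^{k}(\textit{\textbf{f}}\,)$ with $k\geq 1$ appearing in \eqref{tergss}, is obtained exactly as the ``in particular'' clause of Theorem~\ref{prop:unif-limit}: one combines the case $k=1$ of \eqref{tergss} with the monotonicity $\text{mdim}_M(X,\textit{\textbf{f}},d)\leq \text{mdim}_M(X,\sigma(\textit{\textbf{f}}\,),d)$, which is proved just as Proposition~\ref{pro:shift} (using $f_1^{(k+1)}=f_2^{(k)}\circ f_1$ to get $\text{cov}(n,\textit{\textbf{f}},\varepsilon)\leq \text{cov}(1,\textit{\textbf{f}},\varepsilon)\cdot\text{cov}(n-1,\sigma(\textit{\textbf{f}}\,),\varepsilon)$, whence $\text{cov}(\textit{\textbf{f}},\varepsilon)\leq\text{cov}(\sigma(\textit{\textbf{f}}\,),\varepsilon)$). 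Chaining the two pairs of inequalities gives the first display of the corollary.

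For the asymptotic statement I would observe that each tail $\sigma^{i}(\textit{\textbf{f}}\,)=(f_{i+1},f_{i+2},\dots)$ again converges uniformly to $f$, so the part just proved applies to $\sigma^{i}(\textit{\textbf{f}}\,)$ and gives $\overline{\text{mdim}_M}(X,\sigma^{i}(\textit{\textbf{f}}\,),d)\leq \overline{\text{dim}_{B}}(X,d)$ and $\underline{\text{mdim}_M}(X,\sigma^{i}(\textit{\textbf{f}}\,),d)\leq \underline{\text{dim}_{B}}(X,d)$ for every $i\geq 1$. Taking $\limsup_{i\to\infty}$ and recalling the definition $\text{mdim}_M(X,\textit{\textbf{f}},d)^{*}=\limsup_{i\to\infty}\text{mdim}_M(X,\sigma^{i}(\textit{\textbf{f}}\,),d)$ then produces $\overline{\text{mdim}_M}(X,\textit{\textbf{f}},d)^{*}\leq\overline{\text{dim}_{B}}(X,d)$ and $\underline{\text{mdim}_M}(X,\textit{\textbf{f}},d)^{*}\leq\underline{\text{dim}_{B}}(X,d)$; alternatively one may feed the inequality $\text{mdim}_M(X,\textit{\textbf{f}},d)^{*}\leq \text{mdim}_M(X,f,d)$ already recorded in Theorem~\ref{maintheoremE} into the bound for $f$. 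Finally, for $X=[0,1]$ with $d(x,y)=|x-y|$ one has $\overline{\text{dim}_{B}}([0,1],|\cdot|)=\underline{\text{dim}_{B}}([0,1],|\cdot|)=1$, so the last assertion $\overline{\text{mdim}_M}([0,1],\textit{\textbf{f}},|\cdot|)^{*}\leq 1$ follows.

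I do not expect a genuine obstacle here: the statement is a purely formal consequence of results already in hand. The only points that need a little attention are (i) that the reduction to the limit map and the box-dimension estimate are both stated separately for $\overline{\text{mdim}_M}$ and for $\underline{\text{mdim}_M}$, so the two chains of inequalities must be kept apart, and (ii) that Theorem~\ref{maintheoremE} is phrased for the shifts $\sigma^{k}(\textit{\textbf{f}}\,)$, so one must record explicitly --- as above --- that the same bound is valid for $\textit{\textbf{f}}$ itself.
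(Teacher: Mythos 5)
Your proof is correct and takes essentially the same route as the paper, which derives the corollary directly by combining Theorem~\ref{maintheoremE} with Proposition~\ref{erfdy}. Your additional step extending the bound from the tails $\sigma^{k}(\textit{\textbf{f}}\,)$, $k\geq 1$, to $\textit{\textbf{f}}$ itself via the covering-number estimate $\text{cov}(n,\textit{\textbf{f}},\varepsilon)\leq \text{cov}(1,\textit{\textbf{f}},\varepsilon)\cdot\text{cov}(n-1,\sigma(\textit{\textbf{f}}\,),\varepsilon)$ correctly makes explicit a point the paper leaves implicit.
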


  Example \ref{lkjhfg}  proves that the box dimension is not an upper bound for the metric mean dimension of  sequences that are not convergent. Next example shows the inequality  in  Corollary \ref{efrrttt}   can be strict.

\begin{example} For each $n\geq 1,$ take $m_{n}=n$ and
$$
f_n(x)= \begin{cases}
    \phi(x), &  \text{ if }x\in[0,a_{n+1}], \\
    a_{n+1}, & \hbox{ if }x\in [a_{n+1},1],
      \end{cases}
$$ where $ \phi$ is the map  in Example \ref{exfagner}.  
Thus $f_n$ converges uniformly to $\phi$ as $n\rightarrow \infty$. In \cite{K-S}, Figure 3, is proved that the topological entropy $h_{top}((f_{n+k})_{n=1}^{\infty})=k\log 3$ for each $k\geq1$. Hence, $ \overline{\text{mdim}_M}([0,1],(f_{n+k})_{n=1}^{\infty},|\cdot |)=0$ and therefore $$ \overline{\text{mdim}_M}([0,1],(f_{n})_{n=1}^{\infty},|\cdot |)^{\ast}=0< \overline{\text{mdim}_M}([0,1],\phi,|\cdot |)=1.$$ 
\end{example}

 \begin{example}  
The sequence
$$
g_n(x)= \begin{cases}
    \phi(x), &  \text{ if }x\in[0,a_{n+1}], \\
    x, & \hbox{ if }x\in [a_{n+1},1].
      \end{cases}
$$
  converges uniformly to $\phi$ as $n\rightarrow \infty$, where $ \phi$ is the map  in Example \ref{exfagner}. Note that $g_{1}^{(n+k)}|J_{n}=\phi^{k}|_{J_{n}},$ for $n\geq 1,k\geq1$ (see Example \ref{exfagner}).  Hence $$\text{sep}(2n+k,(g_{i})_{i=1}^{\infty}, \varepsilon_{n})\geq (3^{m_{n}}/2)^{k},\quad\text{ and then }\quad \text{sep}((g_{i})_{i=1}^{\infty},\varepsilon_{n}) \geq \log (3^{m_{n}}/2).$$ Therefore $
\overline{\text{mdim}_M}([0,1],(g_{i})_{i=1}^{\infty},| \cdot |)\geq 1.$ By \eqref{tergss} we obtain that   $ \overline{\text{mdim}_M}([0,1],(g_{i})_{i=1}^{\infty},|\cdot |)= 1$.   Note that $ \overline{\text{mdim}_M}([0,1],g_{i},|\cdot |)= 0$ for any $i\geq1$. \end{example}

\section{Uniform equiconjugacy and metric mean dimension}\label{section6}

   We say that  the systems  $\textit{\textbf{f}}=(f_{n})_{n=1}^{\infty}$ on $(X,d)$ and $\textit{\textbf{g}}=(g_{n})_{n=1}^{\infty}$ on $(Y,d^{\prime})$
are \textit{uniformly equiconjugate} if there exists a equicontinuous sequence of  homeomorphisms $h_n: X\to Y$ so that
$h_{n+1}\circ f_n=g_n\circ h_n$, for all $n\in \mathbb N$,  that is, the following diagram
\[ \begin{CD}
    X @>f_1>> X@>f_2>>\dots @>f_n>>X\\
    @VVh_{1} V      @VV h_{2} V      @.     @VVh_{n+1}V\\
   Y @>g_1>> Y@>g_2 >>\dots @>g_n>>Y
  \end{CD}
\]
is commutative for  all $n\in \mathbb N$. 
In the case where $h_n=h$, for all $n\in\mathbb N$, we say that
$ \textit{\textbf{f}}$ and $\textit{\textbf{g}}$ are \textit{uniformly conjugate}.

\medskip

 Note that the notion of uniform equiconjugacy does not depend on the metric on $X$ and $Y$. Indeed,  if $d^{\ast}$   and $d^{\star}$ are another  metrics on $X$ and $Y$, respectively, then $(X,\textit{\textbf{f}}, d)$ and $(X,\textit{\textbf{f}}, d^{\ast})$ are uniformly equiconjugate by the sequence $(I_{X})_{n=1}^{\infty}$   and  $(Y,\textit{\textbf{g}}, d')$ and $(Y,\textit{\textbf{g}}, d^{\star})$ are uniformly equiconjugate by the sequence $(I_{Y})_{n=1}^{\infty}$. Hence, if  $(X,\textit{\textbf{f}}, d)$ and $(Y,\textit{\textbf{g}}, d')$ are uniformly equiconjugate by the sequence $(h_{n})_{n=1}^{\infty}$, then $(X,\textit{\textbf{f}}, d^{\ast})$ and $(Y,\textit{\textbf{g}}, d^{\star})$ are uniformly equiconjugate by the sequence $(I_{Y}\circ h_{n}\circ I_{X})_{n=1}^{\infty}$.

\begin{theorem}\label{edee344}
 Let $\textit{\textbf{f}}=(f_{n})_{n=1}^{\infty}$ and $\textit{\textbf{g}}=(g_{n})_{n=1}^{\infty}$ be two non-autonomous dynamical systems defined on the  metric spaces $(X,d)$ and $(Y,d^{\prime})$ respectively.
\begin{enumerate}[(i)]
\item  If $\textit{\textbf{f}}$ and $\textit{\textbf{g}}$ are uniformly conjugate then
\begin{align*}
\emph{mdim}(X,\textit{\textbf{f}}\,)&=\emph{mdim}(X,\textit{\textbf{g}}).
\end{align*}
\item If $(X,\textit{\textbf{f}}\,)$ and $(Y,\textit{\textbf{g}})$ are uniformly equiconjugate by a sequence of homeomorphisms $(h_n)_{n=1}^{\infty}$ that satisfies $\inf_n \{d(h^{-1}_n(y_1),h_n^{-1}(y_2))\}>0$  for any   $y_1,y_2\in Y$, then (see \eqref{infmean})
\begin{equation*}
 {\emph{mdim}_M}(X,\textit{\textbf{f}}\,)\geq  {\emph{mdim}_M}(Y,\textit{\textbf{g}}).
\end{equation*}
\item  If $(X,\textit{\textbf{f}}\,)$ and $(Y,\textit{\textbf{g}})$ are uniformly equiconjugate by a sequence of homeomorphisms $(h_n)_{n=1}^{\infty}$ that satisfies $\inf_n \{ d^{\prime}(h_n(x_1),h_n(x_2))\}>0$  for any $x_1,x_2\in X$, then
\begin{equation*}
 {\emph{mdim}_M}(X,\textit{\textbf{f}}\,)\leq  {\emph{mdim}_M}(Y,\textit{\textbf{g}}).
\end{equation*}
\item  If $(X,\textit{\textbf{f}}\,)$ and $(Y,\textit{\textbf{g}})$ are uniformly equiconjugate by a sequence of homeomorphisms $(h_n)_{n=1}^{\infty}$ that satisfies $\inf_n \{d(h^{-1}_n(y_1),h_n^{-1}(y_2)), d^{\prime}(h_n(x_1),h_n(x_2))\}>0$  for any $y_{1},y_{2}\in Y$ and  $x_1,x_2\in X$, then
\begin{equation*}
 {\emph{mdim}_M}(X,\textit{\textbf{f}}\,)=  {\emph{mdim}_M}(Y,\textit{\textbf{g}}).
\end{equation*}
\end{enumerate}
\end{theorem}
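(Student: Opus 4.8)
The strategy is to reduce everything to a single, careful comparison between separated sets for $\textit{\textbf{f}}$ and $\textit{\textbf{g}}$ under the conjugating homeomorphisms, and then to take infima over equivalent metrics. For part (i), uniform conjugacy means $h_{n}=h$ is a single homeomorphism with $h\circ f_{n}=g_{n}\circ h$, hence $h\circ f_{1}^{(k)}=g_{1}^{(k)}\circ h$ for all $k$; this immediately transports open covers and their refinements, giving $\mathcal D((h^{-1}(\alpha))_{0}^{n-1})=\mathcal D(\alpha_{0}^{n-1})$ for every open cover $\alpha$ of $Y$, so the suprema defining $\operatorname{mdim}$ agree. This part is purely formal and should be dispatched in two or three lines.

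The heart of the matter is parts (ii) and (iii), which are dual; I would prove (ii) in detail and then indicate that (iii) follows by swapping the roles of $\textit{\textbf{f}}$ and $\textit{\textbf{g}}$ and of $h_{n}$ and $h_{n}^{-1}$. Fix any metric $\rho'\in\mathcal B$ on $Y$; the goal is to produce a metric $\rho\in\mathcal B$ on $X$ with $\operatorname{mdim}_{M}(X,\textit{\textbf{f}},\rho)\geq\operatorname{mdim}_{M}(Y,\textit{\textbf{g}},\rho')$, which then yields the inequality for the infima. The natural candidate is $\rho(x_{1},x_{2})=\sup_{n}\rho'(h_{n}(x_{1}),h_{n}(x_{2}))$; equicontinuity of $(h_{n})$ together with the hypothesis $\inf_{n}d(h_{n}^{-1}(y_{1}),h_{n}^{-1}(y_{2}))>0$ is exactly what is needed to check that $\rho$ is a metric equivalent to $d$ (the inf condition rules out $\rho(x_{1},x_{2})=0$ for $x_{1}\neq x_{2}$, and equicontinuity of the $h_{n}$ gives one direction of topological equivalence while equicontinuity of the inverses — which I should note follows from the compactness of $X$ and $Y$ — gives the other). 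Then, using $h_{n+1}\circ f_{n}=g_{n}\circ h_{n}$, one computes that for $x_{1},x_{2}\in X$,
\[
\rho_{n}^{\textit{\textbf{f}}}(x_{1},x_{2})=\max_{0\le k< n}\rho\bigl(f_{1}^{(k)}(x_{1}),f_{1}^{(k)}(x_{2})\bigr)\ \ge\ \max_{0\le k<n}\rho'\bigl(g_{1}^{(k)}(h_{1}(x_{1})),g_{1}^{(k)}(h_{1}(x_{2}))\bigr)=(\rho')_{n}^{\textit{\textbf{g}}}(h_{1}(x_{1}),h_{1}(x_{2})),
\]
because $\rho(f_{1}^{(k)}(x_{1}),f_{1}^{(k)}(x_{2}))\ge\rho'(h_{k+1}(f_{1}^{(k)}(x_{1})),h_{k+1}(f_{1}^{(k)}(x_{2})))=\rho'(g_{1}^{(k)}(h_{1}(x_{1})),g_{1}^{(k)}(h_{1}(x_{2})))$. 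Consequently the image under $h_{1}$ of an $(n,\textit{\textbf{f}},\varepsilon)$-separated set for $\rho$ is an $(n,\textit{\textbf{g}},\varepsilon)$-separated set for $\rho'$, so $\operatorname{sep}(n,\textit{\textbf{f}},\varepsilon)\ge\operatorname{sep}(n,\textit{\textbf{g}},\varepsilon)$ for all $n$ and $\varepsilon$; dividing by $n$, taking $\limsup$, dividing by $|\log\varepsilon|$ and taking the appropriate limit in $\varepsilon$ gives $\operatorname{mdim}_{M}(X,\textit{\textbf{f}},\rho)\ge\operatorname{mdim}_{M}(Y,\textit{\textbf{g}},\rho')$. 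Taking the infimum over $\rho'\in\mathcal B$ and noting $\rho\in\mathcal B$ yields (ii). Part (iii) is symmetric using $\rho(x_{1},x_{2})=\sup_{n}\rho'(h_{n}(x_{1}),h_{n}(x_{2}))$ again but now comparing in the reverse direction via $h_{n}^{-1}$, and part (iv) is just the conjunction of (ii) and (iii).

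The main obstacle I anticipate is verifying that the glued metric $\rho(x_{1},x_{2})=\sup_{n}\rho'(h_{n}(x_{1}),h_{n}(x_{2}))$ genuinely lies in $\mathcal B$, i.e. is finite, is a metric, and induces the topology of $X$: finiteness needs the $\rho'$-boundedness of $Y$ (compactness), positivity needs the $\inf_{n}$ hypothesis, and topological equivalence needs equicontinuity of $(h_{n})$ in one direction and of $(h_{n}^{-1})$ in the other — the latter should be extracted as a short lemma from compactness plus the $\inf_{n}$ condition, since it is not literally among the stated hypotheses. Once that technical point is settled, the dynamical comparison of separated sets and the passage through the definitions of $\overline{\operatorname{mdim}_{M}}$ and $\underline{\operatorname{mdim}_{M}}$ are routine, and I would state them once for both $\limsup$ and $\liminf$ versions so that $\operatorname{mdim}_{M}$ (meaning either) is handled uniformly.
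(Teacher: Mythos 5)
Part (i) of your plan is fine and is essentially the paper's argument. The genuine gap is in your treatment of (ii) (and hence of (iii) and (iv)): the quantifiers are backwards. Since $\operatorname{mdim}_M(X,\textit{\textbf{f}}\,)=\inf_{\rho}\operatorname{mdim}_M(X,\textit{\textbf{f}},\rho)$ and $\operatorname{mdim}_M(Y,\textit{\textbf{g}})=\inf_{\rho'}\operatorname{mdim}_M(Y,\textit{\textbf{g}},\rho')$, to prove (ii) you must show that for \emph{every} metric $\rho$ on $X$ equivalent to $d$ there is \emph{some} metric on $Y$ equivalent to $d'$ whose metric mean dimension is dominated by $\operatorname{mdim}_M(X,\textit{\textbf{f}},\rho)$. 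What you prove is the reverse: for every $\rho'$ on $Y$ you construct one particular $\rho$ on $X$ with $\operatorname{mdim}_M(X,\textit{\textbf{f}},\rho)\geq\operatorname{mdim}_M(Y,\textit{\textbf{g}},\rho')$. That only bounds \emph{some} values on the $X$-side from below and says nothing about their infimum, so it does not ``yield the inequality for the infima.'' A symptom of the problem is that your construction never genuinely uses the hypothesis of (ii): positivity of $\rho(x_1,x_2)=\sup_n\rho'(h_n(x_1),h_n(x_2))$ is automatic from the $n=1$ term alone, since $h_1$ is injective, whereas the hypothesis $\inf_n d(h_n^{-1}(y_1),h_n^{-1}(y_2))>0$ is a statement about a function on $Y$. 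The paper's proof transports the metric in the opposite direction: it sets $d^\star(y_1,y_2)=\inf_n d(h_n^{-1}(y_1),h_n^{-1}(y_2))$ on $Y$ (this is exactly where the hypothesis of (ii) enters, as positivity of $d^\star$) and shows that $h_1$ sends $(m,\textit{\textbf{f}},\varepsilon)$-spanning sets for $d$ to $(m,\textit{\textbf{g}},\varepsilon)$-spanning sets for $d^\star$, whence $\operatorname{mdim}_M(X,\textit{\textbf{f}},d)\geq\operatorname{mdim}_M(Y,\textit{\textbf{g}},d^\star)$; applying the same construction to an arbitrary metric equivalent to $d$ is what gives the inequality between the infima, and (iii) is the mirror image using $\inf_n d'(h_n(x_1),h_n(x_2))$ on $X$. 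Your sup-construction would instead be suited to producing, from a $Y$-metric, an $X$-metric with \emph{larger} metric mean dimension, which helps neither inequality.

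A smaller slip within your comparison: the sentence ``the image under $h_1$ of an $(n,\textit{\textbf{f}},\varepsilon)$-separated set for $\rho$ is an $(n,\textit{\textbf{g}},\varepsilon)$-separated set for $\rho'$'' does not follow from your displayed inequality $\rho_n^{\textit{\textbf{f}}}\geq(\rho')_n^{\textit{\textbf{g}}}\circ(h_1\times h_1)$; what follows is that the \emph{preimage} under $h_1$ of a $\rho'$-separated set is $\rho$-separated, which does give the inequality $\operatorname{sep}_\rho(n,\textit{\textbf{f}},\varepsilon)\geq\operatorname{sep}_{\rho'}(n,\textit{\textbf{g}},\varepsilon)$ you state. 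So the dynamical comparison itself is salvageable, but (ii) and (iii) need to be reorganized with the auxiliary metric constructed on the opposite space, as in the paper.
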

\begin{proof}
 (i) Let $h: X\to Y$ be a homeomorphism which conjugates  $\textit{\textbf{f}}$ and $\textit{\textbf{g}}$, i.e.,
$h\circ f_1^{(n)}=g_1^{(n)}\circ h$ for all $n\in \mathbb N$. For an open cover $\alpha$
of $X$, consider $\beta=h(\alpha)$, which is an  open cover of $Y$. Now we notice that
\begin{align*}
  \beta_0^{n-1} & =h(\alpha) \vee g_1^{-1 }(h(\alpha))\vee\dots\vee (g_1^{(n-1)})^{-1}(h(\alpha)) \\
                & =h(\alpha)\vee (h\circ f_1^{-1}\circ h^{-1})(h(\alpha))\vee \dots \vee (h\circ (f_1^{(n-1)})^{-1}\circ h^{-1})(h(\alpha))\\
                & =h(\alpha_0^{n-1}).
\end{align*}
It implies that $\mathcal{D}(h(\alpha_0^{n-1}))=\mathcal{D}(\alpha_0^{n-1})$. Since, for any open cover $\beta$ of $Y$
is of the form $h(\alpha)$, for some open cover $\alpha$ of $X$,
\[\text{mdim}(X,\textit{\textbf{f}}\,)=\sup_{\alpha}\lim_{n\to\infty}\frac{\mathcal{D}(\alpha_0^{n-1})}{n}=\sup_{\beta}\lim_{n\to\infty}\frac{\mathcal{D}(\beta_0^{n-1})}{n}=\text{mdim}(Y,\textit{\textbf{g}}).
\]

\noindent (ii) 
Let $(h_n)_{n=1}^{\infty}$ be the sequence of equicontinuous homeomorphisms that equiconjugates $\textit{\textbf{f}}$ and $\textit{\textbf{g}}$. So,
$$f_n\circ\dots\circ f_1=h_{n+1}^{-1}\circ g_n\circ\dots\circ g_1\circ h_1.$$
By assumption we have
\begin{equation*}
 \inf_n \{d(h^{-1}_n(y_1),h_n^{-1}(y_2))\}>0, \text{ for any } y_1\not=y_2\in Y.
\end{equation*}
 Hence, we can define on $Y$   the metric 
\begin{align*}
d^\star(y_1,y_2):=\inf_n\{ d(h_{n}^{-1}(y_1),h_{n}^{-1}(y_2))\}.
\end{align*}
In particular, if $S\subset X$ is a $(m,\textit{\textbf{f}} ,\varepsilon)$-spanning set of $X$ in the metric $d$ and $x_1,x_2\in S$, then
\begin{align*}
d_m^\star(h_1(x_1),h_1(x_2))&=\max\{d^{\star}(h_1(x_1),h_1(x_2)),\dots,d^{\star}
(g_1^{m-1}(h_1(x_1)),g_1^{m-1}(h_1(x_2)))\} \\
     &\leq \max\{d(x_1,x_2), d(h_2^{-1}(g_1(h_1(x_1))),h_2^{-1}(g_1(h_1(x_2)))),\\
            &\quad \quad \dots,d(h_{m+1}^{-1}(g_1^{m-1}(h_1(x_1))),h_{m+1}^{-1}(g_1^{m-1}(h_1(x_2))))\}\\
                            &=d_m(x_1,x_2)\leq \varepsilon.
\end{align*}
It follows that $h_1(S)$ is an $(m, \textit{\textbf{g}} ,\varepsilon)$-spanning set of $Y$ in the metric $d^{\star}$. So we obtain that
$$ 
 {\text{mdim}_M}(X,\textit{\textbf{f}}, d)\geq   {\text{mdim}_M}(Y,\textit{\textbf{g}}, d^{\star}) , 
$$ and therefore $
 {\text{mdim}_M}(X,\textit{\textbf{f}}\,)\geq   {\text{mdim}_M}(Y,\textit{\textbf{g}}). 
$

By an analogous argument we can prove (iii).  Item (iv) follows from (ii) and (iii). 
\end{proof}

 Clearly the theorem implies that if  $\phi:X\rightarrow X$ and $\psi: X\rightarrow X$ are topologically conjugate continuous maps, then      $$
 {\text{mdim}_M}(X,\phi)=  {\text{mdim}_M}(X,\psi), 
$$ which is a well-known fact.


\medskip

The next corollaries  follow from  Theorem \ref{edee344}.

\begin{corollary}\label{newe} If $f_{1},\dots,f_{i},g_{1},\dots,g_{i}$ are homeomorphisms, $ \textit{\textbf{f}}=(f_{1},  \dots, f_{i}, f_{i+1}, f_{i+2}, \dots)$ and $ \textit{\textbf{g}}=(g_{1},\dots, g_{i}, f_{i+1}, f_{i+2}, \dots)$, then \[ {\emph{mdim}_M}(X,\textit{\textbf{f}}\,)={\emph{mdim}_M}(Y,\textit{\textbf{g}}). \]
\end{corollary}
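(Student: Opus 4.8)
The plan is to reduce Corollary \ref{newe} to item (iv) of Theorem \ref{edee344} by exhibiting the two systems as uniformly equiconjugate via an appropriate sequence of homeomorphisms, and then checking that this sequence satisfies the two-sided uniform-separation hypothesis. First I would define the homeomorphisms $h_n:X\to X$ that realize the equiconjugacy between $\textit{\textbf{f}}$ and $\textit{\textbf{g}}$. For $n\le i+1$ the tails of $\textit{\textbf{f}}$ and $\textit{\textbf{g}}$ differ only in the first $i$ coordinates, so I would take $h_n = f_{n-1}\circ\cdots\circ f_1\circ g_1^{-1}\circ\cdots\circ g_{n-1}^{-1}$ for $1\le n\le i+1$ (with $h_1=\mathrm{id}_X$), which makes sense precisely because $f_1,\dots,f_i,g_1,\dots,g_i$ are homeomorphisms; for $n\ge i+1$ one checks $h_{i+1}= f_i\circ\cdots\circ f_1\circ g_1^{-1}\circ\cdots\circ g_i^{-1}$ commutes with the common tail, so we may set $h_n=h_{i+1}$ for all $n\ge i+1$. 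A direct computation then verifies $h_{n+1}\circ f_n = g_n\circ h_n$ for every $n$: for $n<i$ this is immediate from the definition, for $n\ge i$ both sides equal $h_{i+1}\circ f_n$ since $f_n=g_n$ there and $h_n=h_{n+1}=h_{i+1}$.

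Next I would observe that the sequence $(h_n)_{n=1}^\infty$ is equicontinuous: it takes only finitely many distinct values, namely $h_1,\dots,h_{i+1}$, each of which is a continuous map on a compact metric space, hence uniformly continuous, and a finite family of uniformly continuous maps is equicontinuous. Likewise $(h_n^{-1})_{n=1}^\infty$ takes only finitely many values $h_1^{-1},\dots,h_{i+1}^{-1}$. This finiteness is also exactly what gives the uniform-separation conditions needed for item (iv): for fixed distinct $y_1,y_2\in X$ the quantity $\inf_n d(h_n^{-1}(y_1),h_n^{-1}(y_2))$ is a minimum over the finite set $\{d(h_k^{-1}(y_1),h_k^{-1}(y_2)):1\le k\le i+1\}$, and each term is strictly positive because each $h_k^{-1}$ is injective; the same argument applied to $d'=d$ and the maps $h_k$ gives $\inf_n d(h_n(x_1),h_n(x_2))>0$ for distinct $x_1,x_2$. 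Thus both hypotheses of Theorem \ref{edee344}(iv) hold, and we conclude $\text{mdim}_M(X,\textit{\textbf{f}}\,)=\text{mdim}_M(X,\textit{\textbf{g}})$, which is the assertion (with $Y=X$).

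The only genuinely delicate point — and the step I would be most careful about — is verifying the commuting relation $h_{n+1}\circ f_n=g_n\circ h_n$ at the junction $n=i$, where the definition of $h_n$ switches from the "telescoping" formula to the constant value $h_{i+1}$; here one must keep careful track of which index of $f$ and $g$ appears and use $f_n=g_n$ for $n>i$ together with $f_{i+1}=g_{i+1}$ being the common tail map. Everything else is bookkeeping: the equicontinuity and the uniform-separation estimates are immediate consequences of the sequence $(h_n)$ assuming only finitely many values. One could alternatively avoid rechecking the commuting diagram by noting that $\textit{\textbf{f}}$ and $\textit{\textbf{g}}$ both eventually coincide with $\sigma^{i}$ applied appropriately, but the explicit construction above is cleaner and self-contained.
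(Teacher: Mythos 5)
Your overall strategy is the paper's: build an eventually constant sequence of conjugating homeomorphisms, note that it takes only finitely many values (hence is equicontinuous and satisfies the positive-infimum conditions), and invoke Theorem \ref{edee344}. But the specific construction has a genuine gap at exactly the step you flagged. First, a cosmetic issue: with $h_n=f_{n-1}\circ\cdots\circ f_1\circ g_1^{-1}\circ\cdots\circ g_{n-1}^{-1}$ the relation that actually holds is $h_{n+1}\circ g_n=f_n\circ h_n$, not $h_{n+1}\circ f_n=g_n\circ h_n$ (check $n=1$: $h_2\circ f_1=f_1\circ g_1^{-1}\circ f_1\neq g_1$ in general); this only swaps the roles of $\textit{\textbf{f}}$ and $\textit{\textbf{g}}$ and is harmless. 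The real problem is the tail: setting $h_n=h_{i+1}$ for all $n\geq i+1$, the intertwining relation for $n>i$ (in either direction, since $g_n=f_n$ there) reads $h_{i+1}\circ f_n=f_n\circ h_{i+1}$, i.e.\ $h_{i+1}$ must commute with every tail map. Your assertion that $h_{i+1}=f_i\circ\cdots\circ f_1\circ g_1^{-1}\circ\cdots\circ g_i^{-1}$ ``commutes with the common tail'' is unjustified and false in general: it is an arbitrary homeomorphism with no relation to $f_{i+1},f_{i+2},\dots$. So the sequence you define is not an equiconjugacy, and the claim ``for $n\geq i$ both sides equal $h_{i+1}\circ f_n$ since $f_n=g_n$ there'' also fails at $n=i$, where $f_i\neq g_i$.

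The repair is to normalize at the tail rather than at the head, which is what the paper does: put $h_n=I_X$ for all $n\geq i+1$, so the relations for $n>i$ become the trivial identity $f_n=f_n$, and solve backwards $h_i=g_i^{-1}\circ f_i$, $h_{i-1}=g_{i-1}^{-1}\circ h_i\circ f_{i-1}$, \dots, $h_1=g_1^{-1}\circ h_2\circ f_1$ (this uses that $f_1,\dots,f_i,g_1,\dots,g_i$ are homeomorphisms). The resulting sequence $(h_1,\dots,h_i,I_X,I_X,\dots)$ takes finitely many values, so your equicontinuity and positivity-of-infimum arguments go through verbatim and Theorem \ref{edee344} yields the conclusion. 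With that change your proof coincides with the paper's.
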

\begin{proof} Note that the following diagram is commutative
\[
 \begin{CD}
    X @>f_1>> X@>f_i>>\dots X @>f_{i}>> X @>f_{i+1}>>X @>f_{i+2}>>X \\
    @V{h_{1}}VV      @VV{h_{2}}V     @VV{h_{i}}V  @VV{Id_{X}}V    @VV{Id_{X}}V     @VV {Id_{X}}V\\
   X @>g_1>> X@>g_i>>\dots  X @>g_{i}>>X @>f_{i+1}>>X @>f_{i+2}>>X
  \end{CD}
\]
where $I_{X}$ is the identity of $X$ and $h_{i}=g_{i}^{-1}\circ f_{i}$, $h_{i-1}=g_{i-1}^{-1}\circ h_{i}\circ f_{i-1}$, \dots, $h_{1}=g_{1}^{-1}h_{2}f_{1}$.  Furthermore, $(h_{1}, h_{2}, \dots, h_{i}, I_{X}, I_{X} ,\dots )$ is an equicontinuous sequence of homeomorphisms. Therefore, $\textit{\textbf{f}}$ and $\textit{\textbf{g}}$ are uniformly equiconjugate. The corollary follows from Theorem \ref{edee344}, since the infimum $\inf_n \{d(h^{-1}_n(y_1),h_n^{-1}(y_2)),d(h_n(x_1),h_n(x_2))\}>0$ is taken over a finite set.
\end{proof}

Next corollary    means that if $\textit{\textbf{f}}$ is a sequence of homeomorphisms then  the metric mean dimension  is independent on the firsts elements in the sequence $\textit{\textbf{f}}.$

\begin{corollary}\label{corolarioigualdad}  Let $\textit{\textbf{f}}=(f_{n})_{n=1}^{\infty}$ be a non-autonomous dynamical system consisting of homeomorphisms. For any $i,  j\in \mathbb{N}$ we have  $$ {\emph{mdim}_M}(X,\sigma^i(\textit{\textbf{f}}\,))=   {\emph{mdim}_M}(X,\sigma^j(\textit{\textbf{f}}\,)).$$
\end{corollary}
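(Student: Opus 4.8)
The plan is to reduce the statement to the single-step case $\mathrm{mdim}_M(X,\sigma^i(\textit{\textbf{f}}\,))=\mathrm{mdim}_M(X,\sigma^{i+1}(\textit{\textbf{f}}\,))$, since a finite induction on $j-i$ then gives the general equality. So it suffices to fix $i\geq 0$ and compare $\sigma^i(\textit{\textbf{f}}\,)=(f_{i+1},f_{i+2},\dots)$ with $\sigma^{i+1}(\textit{\textbf{f}}\,)=(f_{i+2},f_{i+3},\dots)$, both of which are sequences of homeomorphisms on $X$.

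First I would set up the uniform equiconjugacy between these two shifted systems. Write $\textit{\textbf{h}}=\sigma^i(\textit{\textbf{f}}\,)$ for brevity, so its entries are $h_n:=f_{i+n}$. We want to realize $\sigma^{i+1}(\textit{\textbf{f}}\,)=\sigma(\textit{\textbf{h}})=(h_2,h_3,\dots)$ as uniformly equiconjugate to $\textit{\textbf{h}}=(h_1,h_2,\dots)$. This is exactly the situation handled in the homeomorphism half of Proposition \ref{pro:shift}: prepending or removing one homeomorphism at the front does not change the relevant covers. More precisely, the commuting diagram
\[
 \begin{CD}
    X @>h_1>> X@>h_2>> X @>h_3>>X @>h_4>>X\\
    @V{h_1}VV      @VV{\mathrm{id}_X}V  @VV{\mathrm{id}_X}V   @VV{\mathrm{id}_X}V     @VV{\mathrm{id}_X}V\\
   X @>h_2>> X@>h_3>> X @>h_4>>X @>h_5>>X
  \end{CD}
\]
shows that $\textit{\textbf{h}}$ and $\sigma(\textit{\textbf{h}})$ are uniformly equiconjugate via the sequence $(h_1,\mathrm{id}_X,\mathrm{id}_X,\dots)$, which is trivially equicontinuous since all but one of its terms are the identity and $h_1$ is a single continuous (in fact uniformly continuous, $X$ being compact) map. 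This is the place where the hypothesis that each $f_n$ is a homeomorphism is essential: we need $h_1$ invertible so that the left vertical arrow is a homeomorphism.

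Next I would invoke Theorem \ref{edee344} to pass from the equiconjugacy to the equality of the topological-conjugacy-invariant quantity $\mathrm{mdim}_M(X,\cdot)$ defined in \eqref{infmean}. The equiconjugating sequence $(h_1,\mathrm{id}_X,\mathrm{id}_X,\dots)$ consists of only two distinct maps, $h_1$ and $\mathrm{id}_X$, so the infima $\inf_n\{d(h_n^{-1}(y_1),h_n^{-1}(y_2))\}$ and $\inf_n\{d(h_n(x_1),h_n(x_2))\}$ appearing in items (ii)–(iv) of Theorem \ref{edee344} are each taken over a two-element set and hence are strictly positive whenever $y_1\neq y_2$ (resp. $x_1\neq x_2$), because $h_1$ and $h_1^{-1}$ are injective. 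Thus item (iv) of Theorem \ref{edee344} applies and yields $\mathrm{mdim}_M(X,\sigma(\textit{\textbf{h}}\,))=\mathrm{mdim}_M(X,\textit{\textbf{h}}\,)$, i.e. $\mathrm{mdim}_M(X,\sigma^{i+1}(\textit{\textbf{f}}\,))=\mathrm{mdim}_M(X,\sigma^{i}(\textit{\textbf{f}}\,))$. (Alternatively, Corollary \ref{newe} applies directly: $\sigma^i(\textit{\textbf{f}}\,)$ and $\sigma^{i+1}(\textit{\textbf{f}}\,)$ differ only by one homeomorphism at the front, after a trivial reindexing.) Iterating from $i$ up to $j$ finishes the proof. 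The only point requiring care — and the main obstacle, such as it is — is checking that the equiconjugacy really is through an equicontinuous sequence and that the positivity hypotheses of Theorem \ref{edee344}(iv) hold; both are immediate once one observes the equiconjugating sequence is eventually constant equal to $\mathrm{id}_X$, so there is no genuine difficulty.
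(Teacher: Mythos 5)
There is a genuine gap: the diagram you draw does not commute, so the claimed equiconjugacy is false. With the paper's convention $\phi_{n+1}\circ f_n=g_n\circ\phi_n$, conjugating $\textit{\textbf{h}}=(h_1,h_2,\dots)$ to $\sigma(\textit{\textbf{h}})=(h_2,h_3,\dots)$ by $(\phi_n)=(h_1,\mathrm{id}_X,\mathrm{id}_X,\dots)$ would require in the first square $\mathrm{id}_X\circ h_1=h_2\circ h_1$ and in the $n$-th square ($n\geq 2$) $h_n=h_{n+1}$; i.e.\ it forces $h_2=\mathrm{id}_X$ and the sequence to be eventually constant, which is false in general. Indeed, once $\phi_1$ is fixed, the relation $\phi_{n+1}=h_{n+1}\circ\phi_n\circ h_n^{-1}$ determines every $\phi_n$, and starting from $\phi_1=h_1$ one gets $\phi_n=h_n$ for all $n$ -- a conjugating sequence that is in general neither equicontinuous nor satisfies the uniform positivity hypotheses of Theorem \ref{edee344}(iv). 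Your fallback also fails: Corollary \ref{newe} compares two sequences that \emph{coincide at the same positions} from some index on, whereas $\sigma^i(\textit{\textbf{f}}\,)$ and $\sigma^{i+1}(\textit{\textbf{f}}\,)$ differ by an index shift at every position, so it does not apply ``directly''; the ``trivial reindexing'' is exactly the nontrivial point. (Also, the ``homeomorphism half'' of Proposition \ref{pro:shift} you cite is about $\mathrm{mdim}$, not $\mathrm{mdim}_M$.)

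The paper resolves the index offset differently: using Corollary \ref{newe} it replaces the first $i$ maps of $\textit{\textbf{f}}$ by the identity, obtaining $\textit{\textbf{g}}=(I_X,\dots,I_X,f_{i+1},f_{i+2},\dots)$ with $\mathrm{mdim}_M(X,\textit{\textbf{f}}\,)=\mathrm{mdim}_M(X,\textit{\textbf{g}})$, and then compares Bowen metrics directly: for $n>i$ one has $g_1^{(k)}=I_X$ for $k\leq i$ and $g_1^{(i+k)}=f_{i+1}^{(k)}$, so the $d_n$-metric of $\textit{\textbf{g}}$ equals the $d_{n-i}$-metric of $\sigma^i(\textit{\textbf{f}}\,)$ (for any metric equivalent to $d$), and since $n$ and $n-i$ differ by a constant the exponential growth rates of separated/spanning sets, hence the metric mean dimensions, coincide. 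If you wish to keep your one-shift-at-a-time induction, you still need this Bowen-metric comparison (or an equiconjugacy via the non-equicontinuous sequence $(h_n)$ with additional hypotheses); the identity-padding step cannot be bypassed by Corollary \ref{newe} alone.
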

\begin{proof} It is sufficient to prove that $ {\text{mdim}_M}(X,\sigma^i(\textit{\textbf{f}}\,))=   {\text{mdim}_M}(X,\textit{\textbf{f}}\,)$ for all $i\in\mathbb{N}$. Fix $i\in\mathbb{N}$.  Take     $\textit{\textbf{g}}=(g_{n})_{n\in \mathbb{N}}$, where, for each $n\leq i$,   $g_{n}=I$ is the identity on $X$  and $g_{n}=f_{n}$ for $n>i$.   It follows from Corollary \ref{newe} that $$ {\text{mdim}_M}(X, \textit{\textbf{f}}\,)=   {\text{mdim}_M}(X, \textit{\textbf{g}}).$$ For each $x,y\in X$ and $n> i$  we have \begin{align*}\max \{d(x,y),\dots , d(g_{1}^{(i-1)}(x), &g_{1}^{(i-1)}(y)), \dots, d(g_{1}^{(n-1)}(x), g_{1}^{(n-1)}(y)) \}\\
&= \max \{d(x,y), d(g_{i}(x), g_{i}(y)), \dots, d(g_{i}^{(n-i)}(x), g_{i}^{(n-i)}(y)) \}
\\
&= \max \{d(x,y), d(f_{i}(x), f_{i}(y)), \dots, d(f_{i}^{(n-i)}(x), f_{i}^{(n-i)}(y)) \}.
\end{align*}
 Hence $$ {\text{mdim}_M}(X,\textit{\textbf{f}}\,)= {\text{mdim}_M}(X,\textit{\textbf{g}})=   {\text{mdim}_M}(X,\sigma^i(\textit{\textbf{f}}\,)),$$ which proves the corollary.
\end{proof}


Next corollary follows from  Corollary \ref{corolarioigualdad} and Proposition \ref{propo211} (see the proof of Corollary \ref{desdede}). 
\begin{corollary}  For any homeomorphisms  $f$ and $g$ defined on $X$, we have  $$ {\emph{mdim}_M}(X,f\circ g)=   {\emph{mdim}_M}(X,g\circ f).$$ 
\end{corollary}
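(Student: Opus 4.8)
The plan is to mimic exactly the argument used for the mean dimension in Corollary \ref{desdede}, replacing Proposition \ref{pro:meandimension-periodic} by its metric analogue (Proposition \ref{propo211} together with Corollary \ref{corolarioigualdad}). First I would set $\textit{\textbf{f}}=(f,g,f,g,\dots)$ and $\textit{\textbf{g}}=(g,f,g,f,\dots)$, so that $\textit{\textbf{g}}=\sigma(\textit{\textbf{f}}\,)$. Since $f$ and $g$ are homeomorphisms, $\textit{\textbf{f}}$ is a sequence of homeomorphisms, and Corollary \ref{corolarioigualdad} gives $\text{mdim}_M(X,\textit{\textbf{f}}\,)=\text{mdim}_M(X,\sigma(\textit{\textbf{f}}\,))=\text{mdim}_M(X,\textit{\textbf{g}})$.

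Next I would identify the second iterates: by Definition \ref{composision}, $\textit{\textbf{f}}^{\,(2)}=(f\circ g)_{n=1}^{\infty}$ (the constant sequence with value $f\circ g$) and $\textit{\textbf{g}}^{\,(2)}=(g\circ f)_{n=1}^{\infty}$, so $\text{mdim}_M(X,\textit{\textbf{f}}^{\,(2)})=\text{mdim}_M(X,f\circ g)$ and $\text{mdim}_M(X,\textit{\textbf{g}}^{\,(2)})=\text{mdim}_M(X,g\circ f)$, using Remark \ref{tete}(1) in its obvious metric-mean-dimension form (a constant non-autonomous system has the same metric mean dimension as the single map). The only remaining issue is to turn the inequality in Proposition \ref{propo211} into the equality $\text{mdim}_M(X,\textit{\textbf{f}}^{\,(2)})=2\,\text{mdim}_M(X,\textit{\textbf{f}}\,)$; this is precisely what the parenthetical remark ``(see the proof of Corollary \ref{desdede})'' is pointing at. The point is that, since $f$ and $g$ are homeomorphisms, all the maps $f_n^{(k)}$ appearing in the definition of $d_n$ are uniformly continuous with uniformly continuous inverses on the compact space $X$, so an $(2m,\textit{\textbf{f}},\delta)$-separated set is, up to shrinking $\delta$ to some $\delta'>0$ controlled by a modulus of uniform continuity, an $(m,\textit{\textbf{f}}^{\,(2)},\delta')$-separated set; taking $\liminf$ or $\limsup$ over $\delta$ and dividing by $|\log\delta|$ (and noting $|\log\delta'|/|\log\delta|\to1$) yields the reverse inequality $2\,\text{mdim}_M(X,\textit{\textbf{f}}\,)\le\text{mdim}_M(X,\textit{\textbf{f}}^{\,(2)})$.

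Putting the pieces together exactly as in Corollary \ref{desdede}:
\begin{align*}
\text{mdim}_M(X,f\circ g)&=\text{mdim}_M(X,\textit{\textbf{f}}^{\,(2)})=2\,\text{mdim}_M(X,\textit{\textbf{f}}\,)=2\,\text{mdim}_M(X,\textit{\textbf{g}})\\
&=\text{mdim}_M(X,\textit{\textbf{g}}^{\,(2)})=\text{mdim}_M(X,g\circ f),
\end{align*}
and the same chain holds with $\overline{\text{mdim}_M}$ or $\underline{\text{mdim}_M}$ in place of $\text{mdim}_M$, since each step is valid for both versions (Corollary \ref{corolarioigualdad} is stated for $\text{mdim}_M$ meaning either, and the equicontinuity/homeomorphism hypothesis is what makes Proposition \ref{propo211} an equality here). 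The main obstacle, and the only non-formal step, is the reverse inequality in the ``doubling'' identity for $\textit{\textbf{f}}^{\,(2)}$: one must use compactness of $X$ and that $f,g$ (hence all $f_n^{(k)}$) are homeomorphisms — i.e.\ an equicontinuity-type argument — exactly as in the autonomous fact $h_{top}(\phi^p)=p\,h_{top}(\phi)$ and its metric-mean-dimension counterpart; without the homeomorphism hypothesis only the one-sided inequality of Proposition \ref{propo211} survives, which is why the statement is restricted to homeomorphisms.
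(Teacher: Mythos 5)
Your reduction to the chain of Corollary \ref{desdede} is indeed the route the paper points to, and the steps that use Corollary \ref{corolarioigualdad} (shift invariance for sequences of homeomorphisms) and the identification of $\textit{\textbf{f}}^{\,(2)}$, $\textit{\textbf{g}}^{\,(2)}$ with the single maps $f\circ g$, $g\circ f$ are fine. The genuine gap is the step you yourself single out: upgrading Proposition \ref{propo211} to the equality $\text{mdim}_M(X,\textit{\textbf{f}}^{\,(2)})=2\,\text{mdim}_M(X,\textit{\textbf{f}}\,)$ by an equicontinuity argument. Uniform continuity of $f$ and $g$ (and of their inverses) on the compact space $X$ only produces some modulus $\delta'=\delta'(\delta)\to 0$; it gives no control on the rate, so the assertion $|\log\delta'|/|\log\delta|\to 1$ is unjustified — it holds for bi-Lipschitz maps, and already for H\"older maps the ratio tends to the H\"older exponent rather than to $1$. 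This is precisely the point where the entropy identity $h_{top}(\phi^{p})=p\,h_{top}(\phi)$ fails to transfer to metric mean dimension: after dividing by $|\log\varepsilon|$ the change of scale matters, which is why Proposition \ref{propo211} is stated only as an inequality and why Remark \ref{dgrdgrw} shows it can be strict even for a constant (hence equicontinuous) sequence. Restricting to homeomorphisms does not repair this; it buys plain equicontinuity, not log-scale control. Note also that your claim, if correct, would yield $\text{mdim}_M(X,\phi^{2})=2\,\text{mdim}_M(X,\phi)$ for every homeomorphism $\phi$ (take $f=g=\phi$), a statement the paper nowhere asserts.

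The corollary concerns the metric-free quantity \eqref{infmean}, and within the paper's toolkit it is closed without any doubling equality: since $f$ is a homeomorphism, $g\circ f=f^{-1}\circ(f\circ g)\circ f$, so $f\circ g$ and $g\circ f$ are topologically conjugate, and the observation following Theorem \ref{edee344} (conjugacy invariance of $\text{mdim}_M(X,\cdot)$ — this is where the homeomorphism hypothesis and the quantitative conditions of the form $\inf_n\{\cdots\}>0$ actually enter) gives the equality at once; equivalently, one argues through Corollary \ref{corolarioigualdad}, whose proof rests on the uniform equiconjugacy Theorem \ref{edee344}, rather than through a strengthened Proposition \ref{propo211}. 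As written, your argument does not establish the statement.
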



\section{On the  continuity of the metric mean dimension}\label{section7}

In this section we will show some   results related to the continuity of the metric mean dimension of sequences of diffeomorphisms defined on a manifold.  For any $r\geq 0,$ set  
\[ \mathcal{C}^{r}(X)=\{ (f_{n})_{n=1}^{\infty}: f_{n}:X\rightarrow X \text{ is a }C^{r}\text{-map}\}=\prod_{i=1} ^{+\infty}  \text{C}^{r}(X) ,\]
where $\text{C}^{r}(X)=\{\phi:X\rightarrow X: \phi\text{ is a }C^{r}\text{-map}\}\footnote{If $r\geq 1$ we assume that $X$ is a Riemannian manifold}.$ 
Hence $ \mathcal{C}^{r}(X)$ can be endowed with the \textit{product topology}, which   is generated by the sets
\[ \mathcal{U}=\prod_{i=1} ^{j}  \text{C}^{r}(X)\times \prod_{i=j+1}^{j+m}U_{i} \times \prod_{i>j+m} ^{+\infty}  \text{C}^{r}(X), \]
where $U_{i}$ is an open subset of $\text{C}^{r}(X)$, for $j+1\leq i\leq j+m,$ for some  $j,m\in \mathbb{N}$.
The space $\mathcal{C}^{r}(X)$ with the product topology will be denoted by $(\mathcal{C}^{r}(X),\tau_{prod}).$ We can consider the map 
\begin{align*}
    \underline{\text{mdim}_{M}}:(\mathcal{C}^{r}(X),\tau_{prod})&\rightarrow \mathbb{R}\cup \{+\infty\}\\
    \textit{\textbf{f}} &\to  \underline{\text{mdim}_{M}}(\textit{\textbf{f}},X). 
\end{align*}

Clearly, if $ \underline{\text{mdim}_{M}}$ is a constant map, then is continuous. 

\begin{proposition}  If $\underline{\emph{mdim}_{M}}:(\mathcal{C}^{r}(X),\tau_{prod})\rightarrow \mathbb{R}\cup \{+\infty\}$ is not constant then is discontinuous at any $\textit{\textbf{f}}\in \mathcal{C}^{r}(X)$.
\end{proposition}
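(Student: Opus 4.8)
The statement asserts a strong rigidity: if $\underline{\mathrm{mdim}_M}$ is not globally constant on $(\mathcal{C}^r(X),\tau_{prod})$, then it fails to be continuous at *every* point. The key observation is that the product topology on $\mathcal{C}^r(X)=\prod_{i=1}^\infty \mathrm{C}^r(X)$ is extremely coarse: any basic open neighbourhood $\mathcal{U}$ of a sequence $\textit{\textbf{f}}$ constrains only finitely many coordinates $f_{j+1},\dots,f_{j+m}$ and leaves all coordinates with index $>j+m$ completely free. So the plan is to exploit this together with Corollary~\ref{corolarioigualdad} (for homeomorphisms) or, more robustly, with the observation that the metric mean dimension is insensitive to changes in finitely many initial maps, to show that from inside *any* neighbourhood of $\textit{\textbf{f}}$ one can jump to a sequence whose metric mean dimension equals that of an arbitrarily prescribed system.

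**Key steps, in order.** First I would fix $\textit{\textbf{f}}\in\mathcal{C}^r(X)$ and an arbitrary basic open neighbourhood $\mathcal{U}=\prod_{i=1}^{j}\mathrm{C}^r(X)\times\prod_{i=j+1}^{j+m}U_i\times\prod_{i>j+m}\mathrm{C}^r(X)$ of $\textit{\textbf{f}}$. Second, since $\underline{\mathrm{mdim}_M}$ is not constant, pick some $\textit{\textbf{g}}=(g_n)_{n=1}^\infty\in\mathcal{C}^r(X)$ with $\underline{\mathrm{mdim}_M}(X,\textit{\textbf{g}})\neq\underline{\mathrm{mdim}_M}(X,\textit{\textbf{f}})$. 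Third, build a sequence $\textit{\textbf{h}}\in\mathcal{U}$ that agrees with $\textit{\textbf{f}}$ on the first $j+m$ coordinates (so $\textit{\textbf{h}}\in\mathcal{U}$ trivially, regardless of the $U_i$) and agrees with a shifted copy of $\textit{\textbf{g}}$ from coordinate $j+m+1$ onward; concretely $\textit{\textbf{h}}=(f_1,\dots,f_{j+m},g_1,g_2,g_3,\dots)$. Fourth, I would show $\underline{\mathrm{mdim}_M}(X,\textit{\textbf{h}})=\underline{\mathrm{mdim}_M}(X,\textit{\textbf{g}})$: this is exactly the statement that the lower metric mean dimension does not depend on finitely many initial maps. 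For the special case where the first $j+m$ maps are homeomorphisms this is Corollary~\ref{corolarioigualdad}; for the general $C^r$ (possibly non-invertible) case one checks directly from the definitions that prepending finitely many continuous maps changes $\mathrm{span}(n,\cdot,\varepsilon)$ only by bounded multiplicative factors and a bounded shift in $n$, so the $\limsup\frac1n\log$ and the subsequent $\liminf_{\varepsilon\to0}$ over $|\log\varepsilon|$ are unchanged — this mirrors the computation in the proof of Corollary~\ref{corolarioigualdad}. Finally, since $\textit{\textbf{h}}\in\mathcal{U}$ but $\underline{\mathrm{mdim}_M}(X,\textit{\textbf{h}})=\underline{\mathrm{mdim}_M}(X,\textit{\textbf{g}})\neq\underline{\mathrm{mdim}_M}(X,\textit{\textbf{f}})$, every neighbourhood of $\textit{\textbf{f}}$ contains a point where the function differs from its value at $\textit{\textbf{f}}$, so $\underline{\mathrm{mdim}_M}$ is discontinuous at $\textit{\textbf{f}}$. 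As $\textit{\textbf{f}}$ was arbitrary, discontinuity holds everywhere.

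**Main obstacle.** The only genuine point to be careful about is Step four in full generality: Corollary~\ref{corolarioigualdad} is stated for sequences of homeomorphisms, and here the prepended maps $f_1,\dots,f_{j+m}$ need not be invertible (the space is $\mathcal{C}^r(X)$, with $r$ possibly $0$). So I would either (a) note that it suffices to perturb the first $j+m$ coordinates inside $\mathcal{U}$ to diffeomorphisms — but this is not free since the $U_i$ might be small, though one can instead simply leave those coordinates equal to $f_i$ and argue directly — or, more cleanly, (b) prove the needed invariance-under-finite-prefix statement from scratch: writing $d_n$ for the Bowen metric of $\textit{\textbf{h}}$ and $\tilde d_n$ for that of $\textit{\textbf{g}}$, one has $\tilde d_{n}(x,y)\le d_{n+j+m}(x,y)$ and conversely $d_{n+j+m}(x,y)\le \max\{\tilde d_{n}(x,y),\,\delta_{j+m}(x,y)\}$ where $\delta_{j+m}$ involves only the fixed finite family $f_1,\dots,f_{j+m}$; uniform continuity of this finite family on the compact $X$ then gives, for each $\varepsilon$, a $\varepsilon'\le\varepsilon$ with $\mathrm{span}(n+j+m,\textit{\textbf{h}},\varepsilon)\le\mathrm{span}(n,\textit{\textbf{g}},\varepsilon')\cdot(\text{const})$, and symmetric bounds the other way. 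Passing to $\limsup_n\frac1n\log$ kills the additive shift $j+m$ and the constant, and then $\liminf_{\varepsilon\to0}(\,\cdot\,/|\log\varepsilon|)$ is unaffected because $\varepsilon'$ and $\varepsilon$ differ by a factor bounded independently of $\varepsilon$. This is routine but is where the real content sits; everything else is a topological triviality about the product topology.
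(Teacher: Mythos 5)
Your overall strategy is the same as the paper's: a basic $\tau_{prod}$-neighbourhood of $\textit{\textbf{f}}$ constrains only finitely many coordinates, so one splices a witness $\textit{\textbf{g}}$ with $\underline{\text{mdim}_M}(X,\textit{\textbf{g}})\neq\underline{\text{mdim}_M}(X,\textit{\textbf{f}}\,)$ into the tail and argues that the spliced sequence has the same value as $\textit{\textbf{g}}$, because the metric mean dimension should not feel finitely many initial maps. (The paper takes $\textit{\textbf{j}}=(f_1,\dots,f_k,g_{k+1},g_{k+2},\dots)$, keeping the indices of $\textit{\textbf{g}}$ aligned, and quotes Corollary \ref{newe}; you restart at $g_1$, which changes nothing essential.) The problem is your step four, which is exactly where you place the ``real content'': in the generality you need it the prefix-invariance is false, and the direct argument you sketch in alternative (b) rests on a wrong inequality. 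For $\textit{\textbf{h}}=(f_1,\dots,f_p,g_1,g_2,\dots)$ the Bowen metric of $\textit{\textbf{h}}$ at time $n+p$ equals, beyond the first $p+1$ terms, the Bowen metric $\tilde d_n$ of $\textit{\textbf{g}}$ evaluated at the \emph{images} $f_1^{(p)}(x),f_1^{(p)}(y)$, not at $x,y$; so neither $\tilde d_n(x,y)\le d_{n+p}(x,y)$ nor $d_{n+p}(x,y)\le\max\{\tilde d_n(x,y),\delta_{p}(x,y)\}$ holds in general. Concretely, take $X=[0,1]$ with $d=|\cdot|$, $\textit{\textbf{g}}=(\phi,\phi,\dots)$ with $\phi$ the map of Example \ref{exfagner}, and $f_1\equiv 0$ a constant map. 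Then every composition $h_1^{(k)}$, $k\ge 1$, of the spliced sequence is constant, so $d^{\textit{\textbf{h}}}_n=d$ for all $n$ and $\underline{\text{mdim}_M}(X,\textit{\textbf{h}},\rho)=0$ for every metric $\rho$, while $\underline{\text{mdim}_M}(X,\textit{\textbf{g}},d)=1$. This is the metric-mean-dimension analogue of the remark following Proposition \ref{pro:shift}, and it shows that insensitivity to a finite prefix genuinely needs invertibility (it is the hypothesis under which Corollaries \ref{newe} and \ref{corolarioigualdad} are proved, via equiconjugacy); it cannot be obtained by the routine Bowen-metric bookkeeping you propose.

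Your alternative (a) does not repair this either: only the coordinates $j+1,\dots,j+m$ are constrained, and the prescribed open sets $U_i$ around non-invertible maps may contain no homeomorphisms at all (a non-monotone interval map is not a $C^0$-limit of homeomorphisms), while leaving those coordinates equal to $f_{j+1},\dots,f_{j+m}$ puts you back in the non-invertible situation above. So, as written, the proposal has a genuine gap at its key analytic step. It is fair to add that the paper's own proof invokes Corollary \ref{newe} at the same point without verifying its homeomorphism hypothesis for elements of $\mathcal{C}^{r}(X)$, so the difficulty you correctly sensed is present there too; but your attempted repair does not close it, and any honest proof has to either restrict to invertible prefixes or produce a different mechanism for changing the value inside an arbitrary basic neighbourhood.
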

\begin{proof} Fix $ \textit{\textbf{f}}=(f_{n})_{n=1}^{\infty}\in \mathcal{C}^{r}(X)$.   Since $\underline{\text{mdim}_{M}}$  is not constant, there exists  $ \textit{\textbf{g}}=(g_{n})_{n=1}^{\infty}\in   \mathcal{C}^{r}(X) $ such that
$\underline{\text{mdim}_{M}}(X,\textit{\textbf{g}})\neq \underline{\text{mdim}_{M}}(X,\textit{\textbf{f}}\,).$ Let $\mathcal{V}\in \tau_{prod}$ be any open neighborhood of $\textit{\textbf{f}}$. For some $k\in \mathbb{N}$, the sequence  $\textit{\textbf{j}}=(j_{n})_{n=1}^{\infty}$, defined by
\begin{equation*}j_{n}=
\begin{cases}
  f_{n} & \mbox{if }   n=1,\dots, k \\
  g_{n} & \mbox{if } n>k , \\
        \end{cases}
\end{equation*}
belongs to $\mathcal{V}$, by definition of $\tau_{prod}$. It is follow from Corollary \ref{newe} that $ \underline{\text{mdim}_{M}}(X,\textit{\textbf{j}})= \underline{\text{mdim}_{M}}(X,\textit{\textbf{g}}).$
which proves the proposition.
\end{proof}

Let $d^{1}(\cdot,\cdot)$ be a $C^{1}$-metric on $\text{C}^{1}(X)$. Suppose that $\sup_{n\in\mathbb{N}}\Vert D f_{n}\Vert <\infty$. For any $K>0$, if $d^{1}(g_{n},f_{n})<K$, then $\sup_{n\in\mathbb{N}}\Vert D g_{n}\Vert <\infty$ and therefore $  \underline{\text{mdim}_M}(X,\textit{\textbf{g}},d)=0.$ On the other hand, if $\sup_{n\in\mathbb{N}}\Vert D f_{n}\Vert =\infty$, then $  \underline{\text{mdim}_M}(X,\textit{\textbf{f}},d)$ is not necessarily zero.

\medskip

In \cite{JeoCTE}, Section 6,  is proved that: 
 \begin{proposition}\label{escolhari} If  $\textit{\textbf{f}}=(f_{n})_{n=1}^{\infty} $ is a sequence of $C^{1}$-diffeomorphisms, there exists a sequence of positive numbers $(\delta_{n})_{n=1}^{\infty}$   such that every sequence of diffeomorphisms  $\textit{\textbf{g}}=(g_{n})_{n=1}^{\infty}$ with  \(d^{1}(f_{n},g_{n})<\delta_{n}\) for each $n\geq 1,$
 is uniformly equiconjugate to $\textit{\textbf{f}}$ by a sequence $(h_{n})_{n=1}^{\infty}$ such that $h_{n}\rightarrow I_{X}$ as $n\rightarrow \infty$.
 \end{proposition}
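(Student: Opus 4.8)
The plan is to produce the conjugating homeomorphisms by an explicit \emph{pull back along $\textit{\textbf{f}}$, push forward along $\textit{\textbf{g}}$} procedure, and to select the tolerances $\delta_{n}$ recursively, using only the (fixed, given) sequence $\textit{\textbf{f}}$, so that this procedure converges. For $n\geq 1$ and $m\geq 0$ put
\[
\psi_{n}^{m}:=(g_{n}^{(m)})^{-1}\circ f_{n}^{(m)},\qquad \psi_{n}^{0}=I_X .
\]
A direct computation, using $f_{n+1}^{(m)}\circ f_{n}=f_{n}^{(m+1)}$ and $g_{n}\circ(g_{n}^{(m)})^{-1}=(g_{n+1}^{(m-1)})^{-1}$, shows that as soon as the limits $h_{n}:=\lim_{m\to\infty}\psi_{n}^{m}$ exist one has $h_{n+1}\circ f_{n}=g_{n}\circ h_{n}$ for every $n$. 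Hence the whole problem reduces to forcing the sequences $(\psi_{n}^{m})_{m}$ to converge uniformly and to have limit $I_X$ as $n\to\infty$.

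Since each $f_{j}$ is a $C^{1}$-diffeomorphism of the compact Riemannian manifold $X$ it is bi-Lipschitz; set $M_{j}:=\Lip(f_{j}^{-1})+1\geq 1$. I would choose $\delta_{j}>0$ so small --- an explicit quantity depending only on $f_{j}$ --- that: (i) every $g_{j}$ with $d^{1}(f_{j},g_{j})<\delta_{j}$ is still a diffeomorphism with $\Lip(g_{j}^{-1})\leq M_{j}$; and (ii) $\delta_{j}\leq 2^{-j}\big(\prod_{i=1}^{j}M_{i}\big)^{-1}$. Then, writing $y=f_{n}^{(m)}(x)$ and using $(g_{n}^{(m+1)})^{-1}=(g_{n}^{(m)})^{-1}\circ g_{n+m}^{-1}$,
\[
\sup_{x\in X}\dist\big(\psi_{n}^{m+1}(x),\psi_{n}^{m}(x)\big)\leq \Lip\big((g_{n}^{(m)})^{-1}\big)\,\Lip(g_{n+m}^{-1})\,\delta_{n+m}\leq\Big(\prod_{i=1}^{n+m}M_{i}\Big)\delta_{n+m}\leq 2^{-(n+m)} .
\]
Thus $(\psi_{n}^{m})_{m}$ is uniformly Cauchy, hence converges uniformly to a continuous map $h_{n}\colon X\to X$, and $\sup_{x}\dist(h_{n}(x),x)\leq\sum_{m\geq 0}2^{-(n+m)}=2^{1-n}\to 0$, i.e.\ $h_{n}\to I_X$ uniformly.

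To see that each $h_{n}$ is a homeomorphism I would run the same estimate with $\textit{\textbf{f}}$ and $\textit{\textbf{g}}$ interchanged (shrinking the $\delta_{j}$ further so that they also dominate $\prod_{i\leq j}(\Lip(f_{i})+1)$), obtaining uniform convergence $(\psi_{n}^{m})^{-1}=(f_{n}^{(m)})^{-1}\circ g_{n}^{(m)}\to\tilde h_{n}$ to a continuous map. Passing to the limit in $\psi_{n}^{m}\circ(\psi_{n}^{m})^{-1}=I_X=(\psi_{n}^{m})^{-1}\circ\psi_{n}^{m}$ --- legitimate because $h_{n}$ and $\tilde h_{n}$ are uniformly continuous and the convergences are uniform --- gives $h_{n}\circ\tilde h_{n}=\tilde h_{n}\circ h_{n}=I_X$, so $h_{n}$ is a homeomorphism with inverse $\tilde h_{n}$. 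Finally, the family $(h_{n})$ is automatically equicontinuous: given $\varepsilon>0$, since $h_{n}\to I_X$ uniformly the triangle inequality together with the uniform continuity of $I_X$ provides one modulus of continuity valid for all large $n$, while only finitely many $h_{n}$ remain, each uniformly continuous on the compact space $X$. This yields a uniform equiconjugacy by a sequence $(h_{n})$ with $h_{n}\rightarrow I_{X}$.

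The main obstacle is the choice of the tolerances $(\delta_{n})$. Naive attempts fail: normalizing $h_{1}=I_X$ (or taking a constant tolerance) is hopeless, because the conjugacy equation $h_{n+1}=g_{n}\circ h_{n}\circ f_{n}^{-1}$ amplifies the displacement of $h_{n}$ by the Lipschitz constant of $f_{n}$, and along the inverse orbit the amplification reaches $\Lip((g_{n}^{(m)})^{-1})\leq\prod_{j}M_{j}$, which need not stay bounded. The construction succeeds only because the whole sequence $\textit{\textbf{f}}$ is known before the $\delta_{n}$ are picked, so $\delta_{n}$ can be made to decay faster than $\prod_{j\leq n}M_{j}$, and because in the inverse-limit definition of $h_{n}$ each enormous Lipschitz factor is weighed against the single small quantity $\delta_{n+m}$ coming from step $m$, leaving the geometrically convergent series $\sum_{m}2^{-(n+m)}$. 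Getting this bookkeeping precisely right --- and noticing that no $C^{1}$ control of the $h_{n}$ themselves is needed, since uniform convergence to a continuous limit already forces equicontinuity --- is the technical heart of the argument.
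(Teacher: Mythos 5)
Your proposal is correct, and it is worth noting that the paper itself does not prove this proposition: it is imported verbatim from \cite{JeoCTE}, Section 6, so there is no in-paper argument to compare against. Your construction is the natural one for this kind of statement and is in the same spirit as the cited source: with $\psi_{n}^{m}=(g_{n}^{(m)})^{-1}\circ f_{n}^{(m)}$ the identity $g_{n}\circ\psi_{n}^{m+1}=\psi_{n+1}^{m}\circ f_{n}$ is exactly what is needed, the telescoping bound $\sup_{x}\dist\big(\psi_{n}^{m+1}(x),\psi_{n}^{m}(x)\big)\le \mathrm{Lip}\big((g_{n}^{(m)})^{-1}\big)\,\mathrm{Lip}\big(g_{n+m}^{-1}\big)\,\delta_{n+m}$ is correct (using that $d^{1}$ dominates the $C^{0}$ distance), and the recursive choice $\delta_{j}\le 2^{-j}\big(\prod_{i\le j}M_{i}\big)^{-1}$ --- legitimate precisely because the whole sequence $\textit{\textbf{f}}$ is fixed before the tolerances are chosen --- gives uniform Cauchyness, $h_{n}\to I_{X}$, invertibility via the symmetric limit $\tilde h_{n}=\lim_{m}(f_{n}^{(m)})^{-1}\circ g_{n}^{(m)}$, and equicontinuity of $(h_{n})$ (indeed also of $(h_{n}^{-1})$, so even the stronger reading of ``uniformly equiconjugate'' is covered). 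Two minor remarks: (a) the additional shrinking ``so that the $\delta_{j}$ also dominate $\prod_{i\le j}(\mathrm{Lip}(f_{i})+1)$'' is unnecessary, since the symmetric estimate only involves $\mathrm{Lip}\big((f_{n}^{(m)})^{-1}\big)$ and $\mathrm{Lip}\big(f_{n+m}^{-1}\big)$, which are already controlled by $\prod_{i\le n+m}M_{i}$; it is of course harmless. (b) The step guaranteeing $\mathrm{Lip}(g_{j}^{-1})\le M_{j}=\mathrm{Lip}(f_{j}^{-1})+1$ whenever $d^{1}(f_{j},g_{j})<\delta_{j}$ deserves its one-line justification: on a compact connected Riemannian manifold $\mathrm{Lip}(g^{-1})\le\sup_{x}\Vert (D_{x}g)^{-1}\Vert$, and $C^{1}$-proximity to $f_{j}$ keeps $\sup_{x}\Vert (D_{x}g_{j})^{-1}\Vert\le\sup_{x}\Vert (D_{x}f_{j})^{-1}\Vert+1$; both facts are standard, so this does not affect the validity of the argument.
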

  
 Note that, if $h_{n}\rightarrow I_{X}$ as $n\rightarrow \infty$, then for any $x_{1}\neq x_{2}\in X$ and $y_{1}\neq y_{2}\in Y$ we have $\inf_n \{d(h^{-1}_{n}(y_1),h_{n}^{-1}(y_2)),d(h_{n}(x_1),h_{n}(x_2))\}>0$. Hence, it follows from Theorems \ref{edee344} and Proposition  \ref{escolhari} that
\begin{corollary}\label{esded} Given  a sequence of diffeomorphisms  $\textit{\textbf{f}}=(f_{n})_{n=1}^{\infty}$, there exists a sequence of positive numbers $(\delta_{n})_{n=1}^{\infty}$   such that if $\textit{\textbf{g}}=(g_{n})_{n=1}^{\infty} $ is a sequence of diffeomorphisms such that   \(d^{1} (f_{n},g_{n})<\delta_{n}\) for each $n\geq 1,$  then \begin{equation*}
\underline{\emph{mdim}_M}(X,\textit{\textbf{g}})=\underline{\emph{mdim}_M}(X,\textit{\textbf{f}}\,).
\end{equation*}
\end{corollary}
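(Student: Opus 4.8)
The plan is to combine Proposition~\ref{escolhari} with Theorem~\ref{edee344}(iv), so the argument is essentially a one-line deduction once the hypotheses are matched up. First I would invoke Proposition~\ref{escolhari}: given the sequence of diffeomorphisms $\textit{\textbf{f}}=(f_{n})_{n=1}^{\infty}$, this yields a sequence of positive numbers $(\delta_{n})_{n=1}^{\infty}$ with the property that whenever $\textit{\textbf{g}}=(g_{n})_{n=1}^{\infty}$ is a sequence of diffeomorphisms satisfying $d^{1}(f_{n},g_{n})<\delta_{n}$ for all $n\geq 1$, the systems $\textit{\textbf{f}}$ and $\textit{\textbf{g}}$ are uniformly equiconjugate by a sequence $(h_{n})_{n=1}^{\infty}$ with $h_{n}\to I_{X}$ as $n\to\infty$. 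This is exactly the $(\delta_{n})$ that the corollary promises.

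Next I would check the quantitative hypothesis needed to apply Theorem~\ref{edee344}(iv), namely that
\[
\inf_{n}\{d(h^{-1}_{n}(y_1),h_{n}^{-1}(y_2)),\, d(h_{n}(x_1),h_{n}(x_2))\}>0
\]
for all $y_1\neq y_2$ in $Y=X$ and all $x_1\neq x_2$ in $X$. Since $h_{n}\to I_{X}$ uniformly (and hence $h_{n}^{-1}\to I_{X}$ as well, using compactness of $X$ and that each $h_n$ is a homeomorphism of a compact space), for fixed distinct points $x_1,x_2$ we have $d(h_{n}(x_1),h_{n}(x_2))\to d(x_1,x_2)>0$, so only finitely many terms of the sequence can be small, and each of those finitely many terms is strictly positive because $h_n$ is injective; the infimum over all $n$ is therefore a minimum of finitely many strictly positive numbers, hence strictly positive. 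The same reasoning applies to $d(h_{n}^{-1}(y_1),h_{n}^{-1}(y_2))$. This is the one point that requires a small argument rather than pure quotation, so I would spell it out briefly.

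With both hypotheses verified, Theorem~\ref{edee344}(iv) gives directly
\[
\underline{\text{mdim}_M}(X,\textit{\textbf{f}}\,)=\underline{\text{mdim}_M}(X,\textit{\textbf{g}}),
\]
(the statement of Theorem~\ref{edee344} is phrased for $\text{mdim}_M$ meaning both the upper and lower versions, so it applies to $\underline{\text{mdim}_M}$ in particular), which is the assertion of the corollary. I do not anticipate a genuine obstacle here: the only nontrivial ingredient, the existence of the equiconjugacy together with the convergence $h_n\to I_X$, is supplied wholesale by Proposition~\ref{escolhari}, and the ``$\inf_n>0$'' condition is a routine consequence of uniform convergence to the identity on a compact space. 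The mildest subtlety worth a sentence is to note that convergence $h_n\to I_X$ forces $h_n^{-1}\to I_X$ as well, so that both halves of the infimum condition are controlled.
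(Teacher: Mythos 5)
Your proposal is correct and follows essentially the same route as the paper: the paper likewise obtains $(\delta_n)_{n=1}^{\infty}$ and the equiconjugacy $(h_n)$ with $h_n\to I_X$ from Proposition~\ref{escolhari}, notes that this convergence forces $\inf_n\{d(h_n^{-1}(y_1),h_n^{-1}(y_2)),\,d'(h_n(x_1),h_n(x_2))\}>0$ for distinct points, and then applies Theorem~\ref{edee344}. Your only addition is to spell out the finitely-many-small-terms argument (and the passage to $h_n^{-1}$) that the paper leaves as a one-line remark, which is fine.
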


Roughly, Corollary \ref{esded} means that if $d ^{1}(f_{n},g_{n}) $ converges very quickly to zero as $n\rightarrow \infty$, then  \begin{equation*}
\underline{\text{mdim}_M}(X,\textit{\textbf{f}}\, )=\underline{\text{mdim}_M}(X,\textit{\textbf{g}}).
\end{equation*}

  For each sequence of diffeomorphisms  $\textit{\textbf{f}}=(f_{n})_{n=1}^{\infty}$ and a sequence of positive numbers   $ \varepsilon=(\varepsilon_{n})_{n=1}^{\infty}$, a  \textit{strong basic neighborhood} of $\textit{\textbf{f}}$ is the set \[ B^{r}(\textit{\textbf{f}},\varepsilon)=  \left\{\textit{\textbf{g}}=(g_{n})_{n=1}^{\infty}:  g_{n} \text{ is a }C^{r}\text{-diffeomorphism and }  d  (f_{n},g_{n}) <\varepsilon_{n},\text{ for all  }  n\in\mathbb{N}\right\}. \]
 The  \textit{strong topology} (or \textit{Whitney topology}) on $\mathcal{C}^{r}(X)$ is generated by the strong basic neighborhoods  of each $\textit{\textbf{f}}\in \mathcal{C}^{r}(X)$. The space $\mathcal{C}^{r}(X)$ with the strong topology will be denoted by $(\mathcal{C}^{r}(X),\tau_{str}).$

\begin{corollary}\label{teoprinc} For    $r\geq 1$, let  $\mathcal{D}^{r}(X)\subseteq \mathcal{C}^{r}(X)$ be the set consisting of diffeomorphisms.   Then 
\[\underline{\emph{mdim}_M}:(\mathcal{D}^{r}(X),\tau_{str})\rightarrow \mathbb{R}\cup \{+\infty\}   \]
is a continuous map. 
\end{corollary}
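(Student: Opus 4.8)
The plan is to prove something slightly stronger than continuity, namely that $\underline{\text{mdim}_M}$ is \emph{locally constant} on $(\mathcal{D}^{r}(X),\tau_{str})$; once that is established continuity is automatic, since the preimage under a locally constant map of any subset of $\mathbb{R}\cup\{+\infty\}$ is a union of the (open) neighborhoods on which the map is constant, hence open, regardless of the topology placed on the target. So I fix an arbitrary $\textit{\textbf{f}}=(f_{n})_{n=1}^{\infty}\in\mathcal{D}^{r}(X)$ and aim to exhibit a strong basic neighborhood $B^{r}(\textit{\textbf{f}},\varepsilon)$ of $\textit{\textbf{f}}$ on which $\underline{\text{mdim}_M}$ takes the constant value $\underline{\text{mdim}_M}(X,\textit{\textbf{f}}\,)$.

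The neighborhood is produced by Corollary \ref{esded}. Applying it to $\textit{\textbf{f}}$ (a sequence of $C^{r}$-, hence $C^{1}$-, diffeomorphisms) yields a sequence of positive numbers $(\delta_{n})_{n=1}^{\infty}$ such that every sequence of diffeomorphisms $\textit{\textbf{g}}=(g_{n})_{n=1}^{\infty}$ with $d^{1}(f_{n},g_{n})<\delta_{n}$ for all $n$ satisfies $\underline{\text{mdim}_M}(X,\textit{\textbf{g}})=\underline{\text{mdim}_M}(X,\textit{\textbf{f}}\,)$. It remains only to translate the $C^{1}$-smallness condition into the metric $d$ that generates $\tau_{str}$ on $\mathcal{C}^{r}(X)$: since $r\geq 1$ and $X$ is compact, this metric dominates a constant multiple of the $C^{1}$-metric, so for each $n$ there is $\varepsilon_{n}>0$ with $d(f_{n},g_{n})<\varepsilon_{n}\Rightarrow d^{1}(f_{n},g_{n})<\delta_{n}$ (for $r=1$ one simply takes $\varepsilon_{n}=\delta_{n}$). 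Setting $\varepsilon=(\varepsilon_{n})_{n=1}^{\infty}$, the set $B^{r}(\textit{\textbf{f}},\varepsilon)$ is by definition an open neighborhood of $\textit{\textbf{f}}$ in $\tau_{str}$, it lies in $\mathcal{D}^{r}(X)$, and by the choice of $(\delta_{n})$ the function $\underline{\text{mdim}_M}$ is constant on $B^{r}(\textit{\textbf{f}},\varepsilon)$, equal to $\underline{\text{mdim}_M}(X,\textit{\textbf{f}}\,)$.

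Since $\textit{\textbf{f}}$ was arbitrary, $\underline{\text{mdim}_M}$ is locally constant on $(\mathcal{D}^{r}(X),\tau_{str})$, hence continuous. There is no genuine obstacle here beyond Corollary \ref{esded} itself (which in turn rests on Proposition \ref{escolhari} and Theorem \ref{edee344}); the only points deserving a line of care are the elementary comparison between the $C^{1}$- and $C^{r}$-metrics on a compact manifold and the observation that local constancy already forces continuity.
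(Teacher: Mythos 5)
Your proof is correct and follows essentially the same route as the paper: the paper also fixes $\textit{\textbf{f}}$, invokes Proposition \ref{escolhari} to obtain a strong basic neighborhood $B^{r}(\textit{\textbf{f}},(\delta_{n})_{n=1}^{\infty})$ of sequences uniformly equiconjugate to $\textit{\textbf{f}}$, and then applies Theorem \ref{edee344} to conclude that $\underline{\text{mdim}_M}$ is constant on that neighborhood (you simply use Corollary \ref{esded}, which packages these two ingredients). Your additional remarks --- that local constancy implies continuity and that the metric defining $\tau_{str}$ for $r\geq 1$ controls the $C^{1}$-distance --- only make explicit steps the paper leaves implicit.
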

\begin{proof} Let $\textit{\textbf{f}}\in \mathcal{D}^{r}(X)$. If follows from  Theorem \ref{escolhari}  that there exists a strong basic neighborhood $B ^{r}(\textit{\textbf{f}}, (\delta_{n})_{n=1}^{\infty})$ such that  every  $\textit{\textbf{g}}\in B ^{r}(\textit{\textbf{f}}, (\delta_{n})_{n=1}^{\infty})$ is   uniformly equiconjugate to $\textit{\textbf{f}}$. Thus, from  Proposition \ref{edee344} we have $\underline{\text{mdim}_M}(X,\textit{\textbf{g}})=\underline{\text{mdim}_M}(X,\textit{\textbf{f}}\,)$   for all $\textit{\textbf{g}}\in B^{r}(\textit{\textbf{f}}, (\delta_{n})_{n=1}^{\infty})$, which proves the corollary.
\end{proof}

 A real valued function $\varphi : X \rightarrow \mathbb{R}\cup \{\infty\}$ is called \textit{lower} (respectively \textit{upper})  \textit{semi-continuous on a point} $x\in X$ if $$\liminf_{y\rightarrow x}\varphi (y)\geq \varphi (x)\quad  (\text{repectively } \limsup_{y\rightarrow x}\varphi (y)\leq \varphi (x) ).    $$    $\varphi  $ is called \textit{lower} (respectively \textit{upper})  \textit{semi-continuous} if is  lower (respectively  {upper})  {semi-continuous on any point} of $ X$.

\begin{remark} From now on, we will consider   $\tilde{X}=[0,1]$ or $\mathbb{S}^1$. \end{remark}

Misiurewicz in \cite{Misiurewicz}, Corollary 1, proved that $h_{top}: C^{0}([0,1])\rightarrow \mathbb{R}\cup \{\infty\}$ is lower semi-continuous. For the case of the metric mean dimension we have:  
 
 \begin{proposition}\label{hfjdjehr}  $\emph{mdim}_{M}:C^{0}(\tilde{X})\rightarrow \mathbb{R}$ is nor lower neither upper  semi-continuous on maps with metric mean dimension in $(0,1)$. Furthermore,   
 $\emph{mdim}_{M}:C^{0}(\tilde{X})\rightarrow \mathbb{R}$ is not  lower   semi-continuous on maps with metric mean dimension in $(0,1]$ and is not  upper   semi-continuous on maps with metric mean dimension in $[0,1)$. 
 \end{proposition}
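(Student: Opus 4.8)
The plan is to exhibit, for each target value $a \in (0,1)$ (resp. $a \in (0,1]$ for the lower semi-continuity failure, $a \in [0,1)$ for the upper one), a map $\phi_a \in C^0(\tilde X)$ with $\mathrm{mdim}_M(\phi_a) = a$ together with two sequences of maps converging to $\phi_a$ in the $C^0$-topology, one sequence witnessing $\liminf_{n} \mathrm{mdim}_M(\psi_n) < a$ and the other witnessing $\limsup_n \mathrm{mdim}_M(\eta_n) > a$. The starting point is Proposition \ref{jsffeke}, which already supplies, for every $a \in [0,1]$, a map $\phi_a$ on $\tilde X$ with $\mathrm{mdim}_M(\tilde X,\phi_a,|\cdot|) = a$; moreover these $\phi_a$ are built by piecing together rescaled tent-type maps $T_n^{-1}\circ g^n \circ T_n$ on a sequence of intervals $J_n$ accumulating at $0$, so the construction is flexible enough to perturb.

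First I would handle the failure of upper semi-continuity on maps with $\mathrm{mdim}_M \in [0,1)$. Given such a $\phi_a$, I would approximate it by maps $\eta_n$ obtained by replacing the "tail" behaviour of $\phi_a$ on the intervals $J_m$ with $m \geq n$ by a copy of the full-complexity map from Example \ref{exfagner} (rescaled onto $\bigcup_{m\geq n} J_m$, which is an interval of length tending to $0$). Since the modification takes place on a set shrinking to the point $0$, $\eta_n \to \phi_a$ uniformly; but each $\eta_n$ contains an invariant subinterval on which it is conjugate (via an affine rescaling) to the map of Example \ref{exfagner}, so by Proposition \ref{invariant} and the conjugacy-invariance of $\mathrm{mdim}_M$ we get $\mathrm{mdim}_M(\eta_n) = 1$ for every $n$. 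Hence $\limsup_n \mathrm{mdim}_M(\eta_n) = 1 > a$, killing upper semi-continuity whenever $a < 1$. Symmetrically, for the failure of lower semi-continuity on maps with $\mathrm{mdim}_M \in (0,1]$, I would approximate $\phi_a$ by maps $\psi_n$ that agree with $\phi_a$ except that on each $J_m$ with $m \geq n$ the local map $T_m^{-1}\circ g^m \circ T_m$ is replaced by the identity (or a constant); this again changes $\phi_a$ only on $\bigcup_{m\geq n}J_m \to \{0\}$, so $\psi_n \to \phi_a$ uniformly, while $\psi_n$ now has only finitely many "active" intervals, hence finite topological entropy, hence $\mathrm{mdim}_M(\psi_n) = 0$ by Remark \ref{nmvnrit}. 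Thus $\liminf_n \mathrm{mdim}_M(\psi_n) = 0 < a$ when $a > 0$. Combining the two constructions at an $a \in (0,1)$ gives the full "neither lower nor upper" statement there.

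The main technical point — and the step I expect to require the most care — is verifying that the perturbations genuinely converge uniformly to $\phi_a$ and that the inserted high-complexity blocks really do force $\mathrm{mdim}_M = 1$ after rescaling: one must check that the affine renormalization sending $\bigcup_{m\geq n}J_m$ to $[0,1]$ conjugates $\eta_n|_{\bigcup_{m\geq n}J_m}$ to exactly the Example \ref{exfagner} map (so that the $3/\pi^2 n^2 3^{m_n}$-scale separated sets survive), and that the metric $|\cdot|$ restricted to the subinterval is, up to the affine factor, the original metric, so that the metric mean dimension is unchanged under this rescaling conjugacy (here one uses that $\mathrm{mdim}_M$ is a bi-Lipschitz invariant, since a bi-Lipschitz change of metric alters $\mathrm{sep}(n,\phi,\varepsilon)$ only by a bounded shift in $\varepsilon$, which washes out after dividing by $|\log\varepsilon|$). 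Once that bookkeeping is in place, the conclusion follows by assembling Proposition \ref{jsffeke}, Proposition \ref{invariant}, Remark \ref{nmvnrit}, and the uniform-convergence estimates; I would then remark that on $\mathbb{S}^1$ the same maps work verbatim because all the $\phi_a$ fix $0$ and $1$ and hence descend to the circle.
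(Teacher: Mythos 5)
There is a genuine gap, and it is one of scope rather than of technique. Your constructions perturb only the particular maps $\phi_a$ built in Proposition \ref{jsffeke}, so at best you prove an existential statement: \emph{there exist} maps with metric mean dimension $a$ at which lower (resp.\ upper) semi-continuity fails. But the proposition, as the ``Furthermore'' clause makes clear and as the paper's proof actually establishes, is a statement about \emph{every} map in the indicated class: the proof begins ``Let $\varphi$ be a continuous map on $\tilde X$'' and shows that an \emph{arbitrary} $\varphi$ with positive metric mean dimension can be uniformly approximated by maps of metric mean dimension $0$, and that an \emph{arbitrary} $\varphi$ with metric mean dimension less than $1$ can be uniformly approximated by maps of metric mean dimension $1$. (Under your existential reading the ``Furthermore'' sentence would add nothing, since witnesses in $(0,1)$ already lie in $(0,1]$ and $[0,1)$; and the subsequent corollary about failure at constant sequences $(\phi,\phi,\dots)$ also needs the statement for arbitrary $\phi$.) Your proposal contains no mechanism for an arbitrary map: nothing in it shows, for instance, that a given map of metric mean dimension $0$ (not of the form $\phi_a$) can be approximated by maps of metric mean dimension $1$, or that a given map of metric mean dimension $1$ can be approximated by maps of metric mean dimension $0$.

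The two missing ideas are exactly the ones the paper uses. For the lower semi-continuity failure at an arbitrary $\varphi$ with $\mathrm{mdim}_M(\varphi)>0$, one approximates $\varphi$ uniformly by $C^{1}$ maps; these have finite topological entropy, hence zero metric mean dimension by Remark \ref{nmvnrit} --- no tail-of-$J_m$ surgery is available for a general $\varphi$. For the upper semi-continuity failure at an arbitrary $\varphi$ with $\mathrm{mdim}_M(\varphi)<1$, one performs a \emph{local} surgery near a fixed point $p^{\ast}$ of $\varphi$ (which exists on $[0,1]$; on $\mathbb S^{1}$ one first perturbs $\varphi$ to create a periodic point, which your ``works verbatim'' remark overlooks): choosing $\delta$ by continuity of $\varphi$ at $p^{\ast}$, one inserts a rescaled copy of the map of Example \ref{exfagner} on a small interval adjacent to $p^{\ast}$ and interpolates affinely, obtaining a map $\psi$ with $d(\psi,\varphi)<\varepsilon$ and $\mathrm{mdim}_M(\psi)=1$ by Proposition \ref{invariant}. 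Your perturbations of the specific $\phi_a$ (replacing the tail intervals $J_m$, $m\ge n$, by a rescaled Example \ref{exfagner} block, respectively by the identity) are themselves sound --- note only that the $J_m$ accumulate at $1$, not at $0$, and that the case $a=0$ (a constant map) has no intervals $J_m$ to replace --- but they do not yield the proposition as stated; to repair the argument you need the two general approximation devices above.
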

 \begin{proof}
 Let $\varphi$ be a continuous map on $\tilde{X}$.  If $\text{mdim}_{M}(\varphi)=1$, we can    approximate $\varphi$ by a continuous map with zero metric mean dimension (take a sequence of $C^{1}$-maps converging to $\varphi$). Next, suppose that $\text{mdim}_{M}(\varphi)=0$. Firstly, take $\tilde{X}=[0,1]$. Fix  $\varepsilon >0$.  
  Let $p^{\ast}$ be a fixed point of $\varphi$.   Choose  $\delta>0$ such that $d(\varphi(x),\varphi (p^{\ast}))<\varepsilon/2$ for any $x$ with $d(x,p^{\ast})<\delta$.  Let $\phi $ and $T_{2} $ be as in   Example \ref{exfagner}, with $J_{1}=[0,p^{\ast}]$, $J_{2}=[p^{\ast},p^{\ast}+\delta/2]$, $J_{3}=[p^{\ast}+\delta/2,p^{\ast}+\delta] $ and $J_{4}=[p^{\ast}+\delta,1]$. Take the continuous map $\psi$ on $[0,1]$ defined as $$
\psi(x)= \begin{cases}
    \varphi(x), &  \text{ if }x\in J_{1}\cup J_{4}, \\
   T_{2}^{-1}\phi T_{2}(x), &  \text{ if }x\in J_{2}, \\
    \psi_{1}(x), &  \text{ if }x\in J_{3}, 
      \end{cases}
$$ where $ \psi_{1}$ is the  affine map on $J_{3}$ such that $ \psi_{1}(p^{\ast}+\delta/2)=(p^{\ast}+\delta/2)$  and $ \psi_{1}(p^{\ast}+\delta)=\varphi(p^{\ast}+\delta).$ Note that $d(\psi,\varphi)<\varepsilon.$ It follows from Proposition \ref{invariant} that  $$\text{mdim}_{M}(\tilde{X},\psi,|\cdot |)=\max \{\text{mdim}_{M}(\tilde{X},\psi|_{J_{1}\cup J_{3}\cup J_{4}},|\cdot |)  ,\text{mdim}_{M}(\tilde{X},\psi|_{J_{2}},|\cdot |)\}= 1,$$ 
since $\text{mdim}_{M}(\tilde{X},\psi|_{J_{1}\cup J_{3}\cup J_{4}},|\cdot |)\leq \text{mdim}_{M}(\tilde{X},\varphi,|\cdot |) =0$.
Analogously we can prove that any $\varphi\in C^{0}([0,1])$ with metric mean dimension  in $(0,1)$ can be approximated by both a continuous map with metric mean dimension equal to 1 and a continuous map with metric mean dimension equal to 0. These facts prove the proposition for $\tilde{X}=[0,1]$.  For $\tilde{X}=\mathbb{S}^{1}$, we can approximate any $\varphi\in C^{0}(\mathbb{S}^{1})$ by a map $\varphi^{\ast}$ with periodic points. We can prove analogously  that $\varphi^{\ast}$ can be approximate by a continuous map on $\mathbb{S}^{1}$ with metric mean dimension equal to 0 or equal to 1, which proves the proposition for $\tilde{X}=\mathbb{S}^{1}$. 
 \end{proof}

Next, Kolyada and Snoha in \cite{K-S}, Theorem F, showed that  $h_{top}:\mathcal{C}([0,1])\rightarrow \mathbb{R}\cup\{\infty\}$ is not lower semi-continuous, endowing  $\mathcal{C}([0,1])$   with the metric $$D((f_{n})_{n=1}^{\infty},(g_{n})_{n=1}^{\infty})=\sup_{n\in\mathbb{N}}\max_{x\in [0,1]}|f_{n}(x)-g_{n}(x)|.$$ Furthermore, they proved in Theorem G that $h_{top}:\mathcal{C}([0,1])\rightarrow \mathbb{R}\cup\{\infty\}$ is  lower semi-continuous on any constant sequence $(\phi, \phi,\dots)\in \mathcal{C}(\tilde{X})$.
However, It follows from Proposition \ref{hfjdjehr} that:
 
 \begin{corollary}   $\emph{mdim}_{M}:\mathcal{C}(\tilde{X})\rightarrow \mathbb{R}$ is nor lower neither upper  semi-continuous on any constant sequence $(\phi, \phi,\dots)\in \mathcal{C}(\tilde{X})$. Consequently, $\emph{mdim}_{M}:\mathcal{C}(\tilde{X})\rightarrow \mathbb{R}\cup\{\infty\}$ is nor lower neither upper  semi-continuous.  
 \end{corollary}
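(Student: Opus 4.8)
The plan is to derive the corollary directly from Proposition~\ref{hfjdjehr} by transferring the failure of semi-continuity for single interval/circle maps to the slice of constant sequences inside $\mathcal{C}(\tilde X)$. The first step is the routine non-autonomous counterpart of Remark~\ref{tete}(1): for $\phi\in C^{0}(\tilde X)$ the constant sequence $\textit{\textbf{f}}=(\phi,\phi,\dots)$ has $f_{1}^{(k)}=\phi^{k}$ for every $k$, so the metric $d_{n}$ of Section~\ref{section3} built from $\textit{\textbf{f}}$ is exactly the Bowen metric of the single map $\phi$. Consequently $\text{sep}(n,\textit{\textbf{f}},\varepsilon)=\text{sep}(n,\phi,\varepsilon)$ (and likewise for $\text{span}$ and $\text{cov}$) for all $n$ and $\varepsilon$; hence $\text{mdim}_{M}(\tilde X,\textit{\textbf{f}},\rho)=\text{mdim}_{M}(\tilde X,\phi,\rho)$ for every metric $\rho$ on $\tilde X$, so that $\text{mdim}_{M}(\tilde X,(\phi,\phi,\dots))=\text{mdim}_{M}(\tilde X,\phi)$ whether $\text{mdim}_{M}$ is read with the fixed metric of $\tilde X$ or as the infimum over equivalent metrics.

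The second step is to observe that the inclusion $\iota\colon C^{0}(\tilde X)\to\mathcal{C}(\tilde X)$, $\iota(\phi)=(\phi,\phi,\dots)$, is an isometric embedding for the uniform metric on $C^{0}(\tilde X)$ and the metric $D$ on $\mathcal{C}(\tilde X)$, since
\[
D\big((\phi,\phi,\dots),(\psi,\psi,\dots)\big)=\sup_{n}\,\max_{x\in\tilde X}|\phi(x)-\psi(x)|=\max_{x\in\tilde X}|\phi(x)-\psi(x)|.
\]
Combined with the first step, this says $\text{mdim}_{M}\circ\iota=\text{mdim}_{M}$: every approximation of a map $\phi$ by maps with prescribed metric mean dimension furnished by Proposition~\ref{hfjdjehr} lifts, through $\iota$, to an approximation of the constant sequence $(\phi,\phi,\dots)$ by constant sequences with the same metric mean dimension values.

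To conclude, fix $\phi\in C^{0}(\tilde X)$ with $\text{mdim}_{M}(\tilde X,\phi)\in(0,1)$; such $\phi$ exists by Proposition~\ref{jsffeke}. By Proposition~\ref{hfjdjehr} there are $\psi_{k}\to\phi$ in $C^{0}(\tilde X)$ with $\text{mdim}_{M}(\tilde X,\psi_{k})=0$ and $\chi_{k}\to\phi$ in $C^{0}(\tilde X)$ with $\text{mdim}_{M}(\tilde X,\chi_{k})=1$. Applying $\iota$, the constant sequences $(\psi_{k},\psi_{k},\dots)$ and $(\chi_{k},\chi_{k},\dots)$ converge to $(\phi,\phi,\dots)$ in $(\mathcal{C}(\tilde X),D)$, while by the first step $\text{mdim}_{M}(\tilde X,(\psi_{k},\psi_{k},\dots))=0<\text{mdim}_{M}(\tilde X,\phi)$ and $\text{mdim}_{M}(\tilde X,(\chi_{k},\chi_{k},\dots))=1>\text{mdim}_{M}(\tilde X,\phi)$. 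Hence $\text{mdim}_{M}$ is neither lower nor upper semi-continuous at the constant sequence $(\phi,\phi,\dots)$; the same argument using the one-sided statements of Proposition~\ref{hfjdjehr} shows that it is not lower semi-continuous at any constant sequence with metric mean dimension in $(0,1]$ and not upper semi-continuous at any constant sequence with metric mean dimension in $[0,1)$. In particular $\text{mdim}_{M}\colon\mathcal{C}(\tilde X)\to\mathbb{R}\cup\{\infty\}$ is neither lower nor upper semi-continuous, which gives the ``Consequently'' clause.

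The argument is short, and the only place that asks for a little attention — the ``main obstacle'', such as it is — is the bookkeeping of the first step: one must be sure that the approximants produced inside the proof of Proposition~\ref{hfjdjehr} (built from $C^{1}$ maps on one side and, via Proposition~\ref{invariant}, from the interval maps of Example~\ref{exfagner} on the other) have metric mean dimension \emph{exactly} $0$ and \emph{exactly} $1$, and that a constant sequence with metric mean dimension strictly between $0$ and $1$ is available as the base point; both facts are already contained in Propositions~\ref{hfjdjehr} and~\ref{jsffeke}. Everything else is a direct transcription through the isometry $\iota$.
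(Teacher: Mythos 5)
Your proposal is correct and follows essentially the same route as the paper, which deduces the corollary directly from Proposition \ref{hfjdjehr}: the only ingredients are that a constant sequence $(\phi,\phi,\dots)$ has the same metric mean dimension as the single map $\phi$ (since $f_1^{(k)}=\phi^k$) and that the embedding $\phi\mapsto(\phi,\phi,\dots)$ is isometric for the metric $D$, so the approximating maps of Proposition \ref{hfjdjehr} lift to approximating constant sequences. You simply make explicit the transfer that the paper leaves implicit.
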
 
 
  Take $\textbf{\textit{f}}=(f_{n})_{n=1}^{\infty}$ on $\tilde{X}$ defined by $f_{n} =\psi^{2^{n}}$ for each $n\in\mathbb{N}$, where $\psi$ is the map from Example \ref{exfagner}. We have  $\text{mdim}_{M}(\tilde{X},\textbf{\textit{f}},|\cdot|)=\infty$ (see Example \ref{hfkenrkflr}). Thus  there exist non-autonomous dynamical systems on $\tilde{X}$ with infinite metric mean dimension. Consequently $\text{mdim}_{M}:\mathcal{C}(\tilde{X})\rightarrow \mathbb{R}\cup \{\infty\}$ is unbounded. 

\medskip

 We finish this work with the next result:
 \begin{theorem}\label{bnfuefnf}
$\emph{mdim}_{M}:\mathcal{C}(\tilde{X})\rightarrow \mathbb{R}\cup\{\infty\}$ is not lower semi-continuous on any non-autonomous dynamical system  with non-zero metric mean dimension.  
\end{theorem}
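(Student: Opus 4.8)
The plan is to verify the negation of lower semicontinuity directly: if $\mathbf{f}=(f_n)_{n=1}^{\infty}$ has $\mathrm{mdim}_M(\tilde X,\mathbf{f},|\cdot|)\neq 0$ (hence $>0$ or $=+\infty$), I would produce, for every $\eta>0$, a system $\mathbf{g}\in\mathcal{C}(\tilde X)$ with $D(\mathbf{f},\mathbf{g})<\eta$ and $\mathrm{mdim}_M(\tilde X,\mathbf{g},|\cdot|)=0$. Taking $\eta=1/k$ gives a sequence $\mathbf{g}_k\to\mathbf{f}$ in $(\mathcal{C}(\tilde X),D)$ along which $\mathrm{mdim}_M$ is identically $0$, so $\liminf_{\mathbf{g}\to\mathbf{f}}\mathrm{mdim}_M(\mathbf{g})=0<\mathrm{mdim}_M(\mathbf{f})$, i.e.\ $\mathrm{mdim}_M$ is not lower semicontinuous at $\mathbf{f}$. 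Thus the whole statement reduces to this one approximation claim.

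To build $\mathbf{g}$ I would smooth each coordinate. On $\tilde X=[0,1]$ replace $f_n$ by a Bernstein polynomial $B_{d_n}(f_n)$, and on $\tilde X=\mathbb{S}^1$ by a trigonometric (Fej\'er) approximant of $f_n$, choosing in either case the degree $d_n$ to be the least one for which $\|f_n-g_n\|_\infty<\eta$. The crucial feature of these approximants is the linear bound $\mathrm{Lip}(g_n)\le d_n$. When the family $\{f_n\}_n$ is equicontinuous --- in particular for any constant sequence $(\phi,\phi,\dots)$, in which case this is exactly Proposition~\ref{hfjdjehr} --- a single degree works for all $n$, so $\sup_n \mathrm{Lip}(g_n)<\infty$; by the Lipschitz criterion recalled at the end of Section~\ref{section3} this forces $h_{top}(\tilde X,\mathbf{g})<\infty$, and then Remark~\ref{nmvnrit} yields $\mathrm{mdim}_M(\tilde X,\mathbf{g})=0$. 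In general one uses instead the Zhu--Liu--Xu--Zhang estimate from Section~\ref{section3} (with $k=\dim\tilde X=1$): it suffices that $\limsup_{n\to\infty}\tfrac1n\sum_{i=1}^{n-1}\log d_i<\infty$ to again conclude $h_{top}(\tilde X,\mathbf{g})<\infty$ and hence, via Remark~\ref{nmvnrit}, $\mathrm{mdim}_M(\tilde X,\mathbf{g})=0$. Finally $D(\mathbf{f},\mathbf{g})=\sup_n\|f_n-g_n\|_\infty<\eta$ by construction.

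The step I expect to be the main obstacle is controlling the growth of the degrees $d_n$, i.e.\ establishing $\limsup_n\tfrac1n\sum_{i<n}\log d_i<\infty$ at the fixed resolution $\eta$. This is trivial in the equicontinuous case (the $d_n$ are bounded), and in general one must quantify $d_n$ through the modulus of continuity of $f_n$ --- for Bernstein polynomials $\|f-B_d(f)\|_\infty$ is controlled by $\omega_f(d^{-1/2})$, so $d_n$ is essentially $\omega_{f_n}^{-1}(\eta)^{-2}$ --- and show that the average of $\log d_n$ stays bounded; this is precisely the point where the one-dimensionality of $\tilde X=[0,1]$ or $\mathbb{S}^1$ and the availability of polynomial/trigonometric approximation with only linear Lipschitz growth in the degree enter. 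With that in hand the approximation claim, and hence the theorem, follows.
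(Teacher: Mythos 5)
Your reduction of the theorem to an approximation claim is fine, but the proposal does not prove that claim, and the step you yourself flag as ``the main obstacle'' --- showing $\limsup_n \tfrac1n\sum_{i<n}\log d_i<\infty$ for the degrees of the smoothed maps --- is not only left open, it is false for exactly the systems the theorem is about. Take, as in the paragraph preceding the theorem, $f_n=\psi^{2^n}$ with $\psi$ the map of Example \ref{exfagner}: on the subinterval $J_1$ the map $f_n$ has of the order of $3^{c\,2^n}$ monotone laps of fixed height $|J_1|$, so \emph{any} $g_n$ with $\|f_n-g_n\|_\infty<\eta<|J_1|/2$ has total variation, hence Lipschitz constant, at least of order $3^{c\,2^n}$. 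Then $\tfrac1n\sum_{i<n}\log \mathrm{Lip}(g_i)\gtrsim 2^n/n\to\infty$, the Zhu--Liu--Xu--Zhang estimate gives no bound on $h_{top}(\mathbf{g})$, and Remark \ref{nmvnrit} cannot be invoked. So smoothing each coordinate at a fixed resolution $\eta$ cannot in general force the perturbed system to have zero (or even finite) entropy; your argument only goes through when $\{f_n\}$ is equicontinuous, i.e.\ essentially in the single-map situation already covered by Proposition \ref{hfjdjehr}, which is not the content of this theorem.

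The paper's proof avoids this by changing the asymptotic dynamics rather than regularizing the maps: it sets $\mathbf{g}_m=(\lambda_{m+n}f_n)_{n\ge1}$ with $\lambda_j\to1$ and $\lambda_j\cdots\lambda_1\to0$, so that $\sup_n\|\lambda_{m+n}f_n-f_n\|_\infty\le\sup_{j>m}(1-\lambda_j)\to0$ while the composed orbits of each $\mathbf{g}_m$ are claimed to collapse to $0$, killing the metric mean dimension of every approximant regardless of how wild the individual $f_n$ are. That is the structural point your proposal misses: to defeat lower semicontinuity at an arbitrary $\mathbf{f}$ with nonzero metric mean dimension one must destroy the accumulated complexity of the compositions $f_1^{(k)}$ (e.g.\ by introducing uniform eventual contraction), not merely replace each $f_n$ by a smooth map $\eta$-close to it, since the latter inherits all the oscillation of $f_n$ above scale $\eta$. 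Without a replacement for the degree-growth bound, the proposal has a genuine gap at its central step.
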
  
\begin{proof}
Let $\textit{\textbf{f}}=(f_{n})_{n=1}^{\infty}$ be a non-autonomous dynamical system with positive metric mean dimension. Let $\lambda_{m}$ be a sequence in $[0,1]$ such that $\lambda_{m}\rightarrow 1$ and $\lambda_{m}\cdots \lambda_{1}\rightarrow 0$ as $m\rightarrow \infty$. Take $\textbf{\textit{g}}_{m}=(\lambda_{m+n}f_{n})_{n=1}^{\infty}$. Thus $\textbf{\textit{g}}_{m}\rightarrow \textbf{\textit{f}}$ as $m\rightarrow \infty$. However, for any $x\in \tilde{X}$, $({g}_{m})^{(k)}(x)\rightarrow 0$ as $k\rightarrow \infty$. Consequently, the metric mean dimension of $\textbf{\textit{g}}_{m}$ is zero for each $m\in \mathbb{N}$. 
\end{proof}

\end{document}